\DeclareMathOperator*{\tend}{\longrightarrow}
\DeclareMathOperator*{\D}{\rm{div}}
\DeclareMathOperator*{\wtend}{\rightharpoonup}
\DeclareMathOperator*{\limss}{\overline{\rm lim}}
\theoremstyle{definition}
\newtheorem{defi}{Definition}
\newtheorem{rmk}[defi]{Remark}
\theoremstyle{plain}
\newtheorem{thm}[defi]{Theorem}
\newtheorem{prop}[defi]{Proposition}
\newtheorem{lemma}[defi]{Lemma}
\newcommand{\mc}{\mathcal}
\newcommand{\what}{\widehat}
\newcommand{\s}{\sigma}
\newcommand{\R}{\mathbb{R}}
\newcommand{\N}{\mathbb{N}}
\newcommand{\Z}{\mathbb{Z}}
\newcommand{\T}{\mathbb{T}}
\renewcommand{\P}{\mathbb{P}}
\newcommand{\dx}{ \, {\rm d} x}
\newcommand{\dt}{ \, {\rm d} t}
\def\avint{\mathop{\,\rlap{-}\!\!\int}\nolimits}
\begin{document}

\newcommand{\cobb}[1]{\textcolor{red}{[***DC: #1 ***]}}
\newcommand{\lacour}[1]{\textcolor{blue}{[***GL: #1 ***]}}

\title{\textsc{\Large{\textbf{Weak Solutions for a non-Newtonian Stokes-Transport System}}}}

\author{Dimitri \textsc{Cobb}\footnote{Universität Bonn, \textit{Mathematisches Institut}, Endenicher Allee 60, 53115 Bonn, Germany.\\
\hspace*{0.5cm} \texttt{cobb@math.uni-bonn.de}}$\,$ and Geoffrey \textsc{Lacour}\footnote{Université Clermont Auvergne, CNRS, LMBP, F-63000 Clermont-Ferrand, France.\\ \hspace*{0.5cm} \texttt{geoffrey.lacour@uca.fr}}}

\vspace{.2cm}
\date\today

\maketitle

\begin{abstract}
    In this article, we study a non-Newtonian Stokes-Transport system. This set of PDEs was introduced as a model for describing the behavior of a cloud of particles in suspension in a Stokes fluid, and is a nonlinear coupling between a hyperbolic equation (Transport) and a nonlinear elliptic equation (non-Newtonian Stokes), and as such can be considered as an active scalar equation. We prove the existence of global weak solutions with initial data in critical Lebesgue spaces. In order to overcome the difficulties introduced by the highly nonlinear aspect of this problem, we resort to a combination of DiPerna-Lions theory of transport equations and Minty's trick for elliptic equations.
\end{abstract}

\noindent
{\footnotesize \textbf{Key-Words:} Stokes-Transport, non-Newtonian viscosity, weak solutions, DiPerna-Lions theory, Minty's trick}

\medskip

\noindent {\footnotesize \textbf{Mathematical Subject Classification (2020)}: 35M31, 35Q35, 76A05, 76T20, 35J92.}

\section{Introduction}

The purpose of this paper is to study the existence of global weak solutions of the following Stokes-Transport system with non-Newtonian viscosity:
\begin{equation}\label{ieq:ST}
    \begin{cases}
    \partial_t \rho + u \cdot \nabla \rho = 0 \\
    - \D \left( \nu(\rho) |Du|^{p-2} Du \right) + \nabla \pi = \rho g\\
    \D(u) = 0.
    \end{cases}
\end{equation}
In the equations above, which are set on the $d$-dimensional torus $\T^d$ with $d \geq 2$, the unknown $\rho(t,x) \in \R$ is a scalar function which (roughly) represents a density function of particles suspended in a non-Newtonian fluid. The vector quantity $u(t,x) \in \R^d$ is the velocity field of the fluid, and $\pi(t,x) \in \R$ is the pressure field. Let us specify that the initial data of the system is given by $\rho_0(x) \in \R$, and that we consider throughout the rest of the article a null mean assumption on the velocity field, \textsl{i.e.} $\fint u = 0$. Finally, $g \in \R^d$ is the (constant) gravity, $p > 1$ is a fixed parameter and $\nu : \R \longrightarrow \R_+$ is a given continuous function which is aloud to degenerate at zero, namely one can have $\nu(0) = 0$. Here and henceforth, we denote by $Du$ the strain rate tensor, which is the matrix defined by $[Du]_{ij} = \frac{1}{2}(\partial_i u_j + \partial_j u_i)$.

\medskip

Problem \eqref{ieq:ST} appears in the mathematical description of particles suspended in a fluid. As such, it is a non-Newtonian alternative of the usual Stokes-Transport system
\begin{equation}\label{ieq:STNewtonian}
    \begin{cases}
        \partial_t \rho + u \cdot \nabla \rho = 0 \\
        - \Delta u + \nabla \pi = \rho g \\
        \D(u) = 0,
    \end{cases}
\end{equation}
which was derived from microscopic models by Höfer \cite{Hofer2018} and Mecherbet \cite{Mecherbet2019} and corresponds to \eqref{ieq:ST} when taking $p=2$ and $\nu(r) \equiv 1$. It has also been present in the physical literature as a continuum model for sedimentation, see \cite{Feuillebois} and the references in \cite{Hofer2018}. While we will give more detail in the rest of the introduction, already we point out that problem \eqref{ieq:STNewtonian} has been intensively studied by mathematicians in the past five years, as different authors have been exploring questions related, for example, to well-posedness \cite{Leblond}, \cite{MS}, \cite{Cobb}, controllability or regularity of particle trajectories \cite{MS}, or stability of stratified equilibria \cite{DGL}. In addition, it should be noted that problem \eqref{ieq:ST} belongs to a class of PDEs called active scalar equations (\textsl{e.g.} generalized SQG equations), which is currently under deep scrutiny (see the discussion below).

\medskip

As we will see, the study of the non-Newtonian generalization \eqref{ieq:ST}, which we believe to be new in the mathematical literature, presents some additional difficulties in comparison with the usual Stokes-Transport system \eqref{ieq:STNewtonian}, due to the fact that the leading order terms in the second equation are nonlinear. This will be reflected in the fact that the methods we use in this article to construct solutions are entirely different from those used in previous works for \eqref{ieq:STNewtonian}, and are much more involved. In addition, the introduction of a viscosity multiplier $\nu(r)$ that is aloud to degenerate at zero $\nu(0) = 0$ also complicates matters.\\

\medskip

In the rest of this introduction, we will start by shortly describing the physical background of the PDE system \eqref{ieq:ST}, before discussing previous works on the (Newtonian) Stokes-Transport equations \eqref{ieq:STNewtonian} and related PDEs. Finally, we will state and comment our main result, concerning the existence of global weak solutions for \eqref{ieq:ST}.

\subsection{Physics of the Problem}

Let us start by describing the physical problem from which system \eqref{ieq:ST} originates. We consider an incompressible non-Newtonian fluid of constant density $\bar{\rho} > 0$, inside which lie suspended particles distributed according to the density function $\mu = \bar{\rho} + \rho$. On the one hand, the particles are assumed to be without inertia, so that they are simply transported by the fluid velocity. This gives a first relation
\begin{equation*}
    \partial_t \mu + u \cdot \nabla \mu = 0,
\end{equation*}
which is equivalent to the first equation in \eqref{ieq:ST}. On the other hand, the fluid velocity is assumed to evolve much slower than the particle density, so it is governed by a Stokes equation
\begin{equation*}
    \begin{cases}
        - \D \mathbb{S}[\rho, u] + \nabla P = \mu g \\
        \D(u) = 0,
    \end{cases}
\end{equation*}
where $P$ is the pressure inside the fluid. Note that $\mu$ can be replaced by $\rho$ up to replacing the pressure $P$ by the quantity $\pi(t,x) = P(t,x) - g \cdot x$. In the Stokes equation immediately above, $\mathbb{S}[\rho, u]$ is the viscous stress tensor. While $\mathbb{S}[\rho, u]$ is simply a multiple of the strain rate $Du$ for a Newtonian fluid\footnote{We will only consider the case of a simple fluid, where the shear rate $\dot{\gamma}$ and strain rate $Du$ tensors are equal. We refer to the classical textbook \cite{truesdell} for more on this topic.} it is no more true while considering a more general non-Newtonian setting. In this case, $\mathbb{S}[\rho, u]$ assumes a nonlinear expression of $Du$ and $\rho$ generally having the form

\begin{equation*}
    \mathbb{S}[\rho,u] = \tilde{\nu}(\rho,Du)Du,
\end{equation*}

\noindent the nonlinearity then appears in the viscosity coefficient $\tilde{\nu}$. For example, typical viscosity laws follow polynomial type growth with respect to the strain rate:
\begin{equation}\label{ieq:p-growth-viscosity-law}
    \tilde{\nu}(\rho,Du) = \nu(\rho)(\delta + | Du |^2)^{\frac{p-2}{2}},
\end{equation}

\noindent where $\delta \in \{ 0, 1 \}$ and $p > 1$. Our model \eqref{ieq:ST} covers the case where $\mathbb{S}[\rho, u]$ is a multiple of a power of the strain rate:
\begin{equation}\label{ieq:Viscosity}
    \mathbb{S}[\rho, u] = \nu(\rho) |Du|^{p-2} Du,
\end{equation}
where $p > 1$ is given as a parameter of the fluid. Fixing $\nu \equiv 1$ in \eqref{ieq:p-growth-viscosity-law} for simplicity, we observe that $\tilde{\nu}$ decreases as $| Du |$ increases when $1 < p < 2$, and increases as a function of the growth of $| Du |$ when $p > 2$. These two viscosity regimes are referred to as shear-thinning and shear-thickening behavior respectively, and cover many physically interesting cases. Examples of shear-thinning fluids include biological fluids such as blood, mixtures from the food industry such as ketchup or mayonnaise, or mixtures used in the petrochemical industry such as paints or cement. As far as shear-thickening fluids are concerned, consider, for example, a slightly diluted aqueous solution of cornstarch, or many colloidal suspensions. 

\medskip

Our model is formally\footnote{Strictly speaking, system \eqref{ieq:ST} has not yet been derived from microscopic principles, but is a reasonable generalization of the Newtonian Stokes-Transport problem which has been.} associated with a dilute cloud of inertialess particles suspended in a non-Newtonian fluid following a law of the form \eqref{ieq:Viscosity}, which we call an Ostwald-DeWaele or power-type law. Such a viscosity law holds for a large number of fluids, typically blood, which is shear-thinning, or suspensions of silica particles in polyethylene glycol, which exhibit shear-thickening behavior (see \textsl{e.g.} \cite{CaoChenQianShaYu,MichaudSoutrenon,RudyakTretiakov}).\\

Quite often, non-Newtonian fluids are in fact themselves suspensions of particles in a Newtonian fluid, and the presence of the particles confer non-Newtonian properties to the mixture through their microscopic interaction. For example, blood is a suspension of erythrocytes (\textsl{i.e.} red blood cells) in blood plasma (see \textsl{e.g.} \cite{KamenevaRobertsonSequeira,CardiovascularMath}), which is a Newtonian fluid (and also a suspension of various particles such as proteins, etc.). Likewise, polyethylene glycol in the example above is a Newtonian fluid, while suspensions of silica particles therein are not.

\medskip
 
Consequently, a possible interpretation of our model is to consider the suspension of (larger) particles in a non-Newtonian fluid, which may therefore be, for example, a suspension of (smaller) particles in a Newtonian fluid, but whose presence is established in such a way that the interactions of these particles with each other do not influence the global viscosity of the suspension, \textsl{i.e.} the microscopic interactions they induce (Van Der Waals or Bohr forces) are negligible.

\subsection{Overview of Previous Results on Related Systems}

In this paragraph, we conduct a survey of the mathematical literature on the different topics related to our result.

\subsubsection{Stokes-Transport Equations}

As we have said above, the Stokes-Transport system \eqref{ieq:STNewtonian} was first derived from microscopic principles by Höfer \cite{Hofer2018} and Mecherbet \cite{Mecherbet2019}. Later, Höfer and Schubert \cite{HS} derived a Stokes-Transport system when taking into account the Einstein effective viscosity of the suspended particles. This leads to a viscous strain tensor that depends on the particle density through
\begin{equation*}
    \mathbb{S}[\rho, u] = \nu(\rho) Du,
\end{equation*}
where $\nu$ is a scalar function. Note that this corresponds, in the case $p=2$, to the viscosity law \eqref{ieq:Viscosity}. As mentioned in the previous subsection, this can, for example, describe blood plasma as a suspension of dilute particles whose microscopic interactions are small or negligible. However, the Stokes-Transport system with a fully non-Newtonian viscosity (\textsl{i.e.} depending nonlinearly on $Du$) is, to the best of our knowledge, new in the mathematical literature.

\medskip

The well-posedness issue for the Stokes-Transport system \eqref{ieq:STNewtonian} has been studied by numerous authors. Along with the derivation of the equations, the authors of \cite{Hofer2018} and \cite{HS} also show well-posedness in high regularity spaces. Existence and uniqueness in the space $L^1(\R^3) \cap L^\infty(\R^3)$ under a finite moment condition has subsequently been proved by Mecherbet in \cite{Mecherbet2020}. A similar result was obtained by Leblond in \cite{Leblond} when the equations are set in a bounded domain of $\R^2$ or $\R^3$.

\medskip

It should be noted that the case of two dimensions of space creates difficulties, as the elliptic Stokes problem 
\begin{equation}\label{ieq:StokesNewtonian}
    \begin{cases}
        - \Delta u + \nabla \pi = \rho g \\
        \D(u) = 0
    \end{cases}
\end{equation}
is not well-posed on the whole space $\R^2$. This is due to integrability issues, as the inverse map $\rho \mapsto u$, whose behavior is closely related of the inverse Laplacian $(- \Delta)^{-1}$, naturally sends $L^1$ to the space of functions of Bounded Mean Oscillations $\rm BMO$, which are defined up to the addition of a constant. This phenomenon is known as the Stokes paradox. Nevertheless, some results are available in two dimensions. Leblond \cite{Leblond} has proved the existence and uniqueness of solutions in the strip $\R \times ]0, 1[$ for bounded initial data while Grayer \cite{Grayer2023} has done so in the whole space $\R^2$ for bounded and compactly supported initial data $\rho_0$. The first author \cite{Cobb} has also shown well-posedness in $\R^2$ for initial densities $\rho_0$ in the homogeneous Besov spaces $\dot{B}^0_{1, 1} \cap \dot{B}^0_{2, 1}$ (in particular, $\rho_0$ is of zero average).

\medskip 

Because the inverse Stokes map $u = \Psi (\rho)$ solving \eqref{ieq:StokesNewtonian} behaves essentially as the inverse Laplace operator (it can in fact be written as $\Psi (\rho) = (- \Delta)^{-1} \P (\rho g)$, where $\P$ is the Leray projection), it is apparent that the space $L^1 (\R^3) \cap L^\infty (\R^3)$ is not optimal for solving \eqref{ieq:STNewtonian}. Indeed, the transport structure suggests that it is enough for the velocity to be (Log)-Lipschitz to obtain uniqueness of solutions, while the 3D Stokes map $\Psi$ sends $L^\infty$ to $C^{2 - \epsilon} \subsetneq W^{1, \infty}$. Consequently, Mecherbet and Sueur \cite{MS} have shown global existence and uniqueness for \eqref{ieq:STNewtonian} in the critical space $L^1(\R^3) \cap L^3(\R^3)$ by using Wasserstein distance $W_1$. By Eulerian techniques, the first author of the present paper has also proved \cite{Cobb} an analogous critical well-posedness result for a system with fractional viscosity. Finally, the existence of (possibly) non-unique Lagrangian solutions with $L^1(\R^3)$ initial data has been shown by Inversi in \cite{Inversi}.

\medskip

Many articles have also been written concerning the qualitative description of solutions. For example, following a paper of Elgindi \cite{Elgindi17} on the inviscid Incompressible Porous Media (IPM) equations (which has a comparable structure) Dalibard, Guillod and Leblond \cite{DGL} have explored the long time behavior of solutions in a horizontal channel for initial data that are close to a linearly stratified equilibrium, which involves some mixing effects. Patch solutions, of the form $\rho(t) = \mathds{1}_{\Omega(t)}$ have also been studied by Mecherbet \cite{Mecherbet2020} and Grayer \cite{Grayer2023}. Sueur and Mecherbet \cite{MS} study the exact controllability of the system as well as the analyticity of particle trajectories.

\subsubsection{Active Scalar Equations}

 As previously mentioned, the (non)-Newtonian Stokes-Transport system is a particular case of a very general class of PDEs called active scalar equations. These are scalar transport (or transport-diffusion) equations associated to a velocity field that is given as a function of the scalar unknown. In general, these PDEs take the form
\begin{equation*}
    \begin{cases}
        \partial_t \rho + u \cdot \nabla \rho = 0 \\
        u = T (\rho),
    \end{cases}
\end{equation*}
where $T$ is possibly nonlinear and non-local map and in which we recall that we implicitly consider an initial datum $\rho_0$. In our case, $T = \Psi$ is the inverse Stokes map. Other cases include Surface Quasi-Geostrophic equations (SQG) set on $\R^2$ and where $T$ is given by
\begin{equation*}
    u = T (\rho) := - \nabla^\perp (- \Delta)^{- \beta /2} \rho
\end{equation*}
for some $\beta \in ]0, 2]$. This system is mainly used to model the dynamics of the temperature in the atmosphere. In the case $\beta = 2$, the SQG equations reduce to the 2D Euler system in vorticity form. The SQG equation have been, and continue to be, the object of intense attention by mathematicians, and it is impossible for us here to give a decent picture of the literature. We will simply say that the operator $T = -\nabla^\perp (- \Delta)^{- \beta / 2}$ is skew-symmetric, and so creates a commutator structure in the energy estimates, thus enabling solutions to exists even when the velocity field is less regular than the scalar unknown $\beta < 1$. This fact was discovered by Resnick \cite{Resnick} and exploited to construct weak solutions (see also Marchand \cite{Marchand}). Local well-posedness has also been obtained by using the commutator structure by Chae, Constantin, C\'ordoba, Gancedoo and Wu \cite{CCCGW}. We point out that this commutator structure is absolutely crucial to well-posedness when $\beta < 1$, as shown in the work of Friedlander, Gancedo, Sun and Vicol \cite{FGSV} where ill-posedness is shown for a singular IPM equation, for which the operator $T$ is symmetric.

\medskip

Other active scalar equations include the Burgers equation $T(\rho) = \rho$, the Hamilton-Jacobi equation or the inviscid IPM equation, which is of special interest to us because of its proximity to the Stokes-Transport system: the velocity $u = T(\rho)$ is given by Darcy's law
\begin{equation*}
    \begin{cases}
        u + \nabla \pi = \rho g \\
        \D(u) = 0.
    \end{cases}
\end{equation*}
The question of global well-posedness for the IPM equations is challenging in the extreme. Elgindi \cite{Elgindi17} has proven the existence of global regular solutions near a linearly stratified equilibrium (see also Castro, C\'ordoba and Lear \cite{CCL}), while the first author of the present article has proved local well-posedness in optimal Besov spaces \cite{Cobb}.

\subsubsection{Non-Newtonian Fluids}

In a broader context, several results have been established concerning non-Newtonian flows since the 1960s. The mathematical analysis of power-law type flows, mainly in the case of homogeneous fluids, was first initiated by Ladyzhenskaya \cite{Ladyzhenskaya}, and J.L. Lions \cite{JLL}. In this case, we mainly consider quasilinear parabolic systems of the form

\begin{equation}\label{ieq:NonNewtonianNS}
    \begin{cases}
        \partial_tu + (u\cdot\nabla)u - \D \mathbb{S}[u] + \nabla\pi = f\\
        \D(u) = 0,
    \end{cases}
\end{equation}

\noindent with some boundary conditions, and where the viscous stress tensor is associated with a viscosity of the type \eqref{ieq:p-growth-viscosity-law} for a constant $\rho$. It is then possible to show the existence of global weak solutions for a parameter $p > 2d/(d+2)$,  we refer for example to the work of Diening, R\r{u}\u{z}i\u{c}ka, and Wolf \cite{DieningRuzickaWolf} as well as Bul\'{i}\u{c}ek, Gwiazda, M\'{a}lek, and \'{S}wierczewska-Gwiazda \cite{BGMS} for the homogeneous case (constant density), or Frehse and R\r{u}\u{z}i\u{c}ka \cite{FrehseRuzicka} for the non-homogeneous one (non-constant density).

\medskip

At this point, it is important to point out that the role of the parameter $p > 1$ plays a key role in the mathematical analysis of such models. Firstly, when the fluid is sufficiently shear-thickening, \textsl{i.e.} when $p > 2$ is large enough, it is possible to show the existence and uniqueness of global strong solutions, we mainly refer to the work of M\'{a}lek, Ne\u{c}as, and R\r{u}\u{z}i\u{c}ka \cite{MalekNecasRuzicka} (see also \cite{JLL}). In the more general case where the inequality $p > 2$ is not assumed, problem \eqref{ieq:NonNewtonianNS} may exhibit pathological behavior. While it is a priori possible to show the existence of weak solutions for $p > 1$, note that energy solutions (\textsl{i.e.} of the Leray-Hopf type) can be non-unique according to the work of Burczak, Modena, and Sz\'{e}kelyhidi \cite{BurczakModenaSzekelyhidi} for $1 < p < 2d/(d+2)$, which corresponds to the limiting case of the embedding $W^{1,p} \hookrightarrow L^2$. However, in the absence of a convective term, one expects the existence of good energy solutions from the work of Berselli and R\r{u}\u{z}i\u{c}ka \cite{BerselliRuzicka2} at least on well-chosen domains (see also the work of Gwiazda, \'{S}wierczewska-Gwiazda and Wr\'{o}blewska \cite{GSW}). Note that even the elliptic $p$-Laplacian problem is ill-posed (solutions may be non-unique). For example, Colombo and Tione \cite{ColomboTione} show non-uniqueness when the domain does not satisfy a suitable cone condition over the weak derivatives. It also remains possible to show the existence of local strong solutions for \eqref{ieq:NonNewtonianNS} as proved in the work of Berselli, Diening and R\r{u}\u{z}i\u{c}ka \cite{BerselliDieningRuzicka} as well as Amann's \cite{Amann}.

\medskip

Despite these difficulties, other properties appear when $ 1 < p < 2$. For example, it is possible to show that the energy associated with the flow stops in finite time (see the work of Chupin, Cîndea, and the second author of the present article in \cite{CCL2023}). These results are also valid for the parabolic $p$-Laplacian according to the work of DiBenedetto (see \cite{DiBenedetto}). In particular, this makes it possible to establish global exact controllability properties (see the work of Cîndea and the second author of the present article \cite{CindeaLacour}).

\medskip

Let us focus again on the Stokes-Transport system \eqref{ieq:ST}. In contrast with \eqref{ieq:NonNewtonianNS}, problem \eqref{ieq:ST} is a coupled hyperbolic-elliptic system, the elliptic part of which is a Stokes system with a nonlinear viscosity coefficient. By itself, the elliptic problem has already been intensively studied: Berselli and R\r{u}\u{z}i\u{c}ka \cite{BerselliRuzicka1, BerselliRuzicka3} have examined the existence and regularity of weak solutions. One of the difficulties arising from the nonlinear nature of the Stokes system present in \eqref{ieq:ST} is to take coupling into account when establishing weak solutions, which then comes down to showing that the mapping which associates the density with the velocity field, which we shall refer as the inverse Stokes mapping, is continuous in a suitable topology.

\medskip

Finally, let us observe two facts.  By considering a finite number of particles as rigid bodies immersed in such a fluid, it is possible to show the existence of suitable weak solutions for coupled fluid-particle interaction. We refer to the work of Feireisl, Hillairet, and Ne\u{c}asov\'{a} \cite{FeireislHillairetNecasova} established in a sufficiently shear-thickening frame given by $p \geq 4$. It is also worth to mention the article of Starovoitov \cite{Starovoitov}, who showed the absence of collision between rigid bodies immersed in the fluid. However, our model \eqref{ieq:ST}, which is formally a continuum limit of immersed particles suspended in a power-law type fluid, has not yet been studied in the mathematical literature. Secondly, let us point out that, for the sake of simplicity, we study the problem \eqref{ieq:ST} on the $d$-dimensional torus $\T^d$. This has the double advantage of working on a compact domain while retaining the possibility to resort to Fourier analysis (and in particular Littlewood-Paley theory and paradifferential calculus). It should be noted that the analysis of non-Newtonian flows is often carried out within such a framework, as in the work of Abbatiello and Feireisl \cite{AbbatielloFeireisl}, although generalization to bounded domains of $\R^d$ follows from minor modifications. We expect this to also be the case for our problem.

\subsection{Main Result}

In this paragraph, we comment on the main difficulties linked to the question of existence of solutions for \eqref{ieq:ST}. We then state our main result: the existence of global weak solutions. Finally, we give a short overview of the methods used in our proof.

\subsubsection{Principal Difficulties in the Problem}

As we have explained in the survey of the mathematical literature related to the Stokes-Transport problem above, research has essentially been concerned with the case of Newtonian fluids \eqref{ieq:STNewtonian}. It should be pointed out that the non-Newtonian system \eqref{ieq:ST} is expected to behave very differently: this is due to the fact that the non-Newtonian Stokes problem
\begin{equation}\label{ieq:StokesNonNewtonian}
    \begin{cases}
        - \D \big( \nu(\rho) |Du|^{p-2} Du \big) + \nabla \pi = \rho g \\
        \D (u) = 0
    \end{cases}
\end{equation}
is a nonlinear elliptic problem with possibly non-smooth coefficients depending on the density $\rho$. In such a setting, the inverse Stokes map $u = \Psi(\rho)$ for \eqref{ieq:StokesNonNewtonian} cannot be expected to produce Lipschitz velocity fields if the density only belongs to $L^\infty$. This shows that any global well-posedness result would be (if possible) hard to prove, as one cannot rely on the preservation of Lebesgue norms.

\medskip

In fact, even the existence of global weak solutions may seem challenging at first glance, due to the nature of the nonlinearities in the non-Newtonian Stokes equation \eqref{ieq:StokesNonNewtonian}. Indeed, the basic energy estimates available for \eqref{ieq:StokesNonNewtonian} yield bounds of the form (see Proposition \ref{p:APriori})
\begin{equation}\label{ieq:WeightedLP}
    Du \in L^p \big( \nu(\rho) \dx \big) \subset W^{1, \beta}
\end{equation}
provided that the density is in a Lebesgue space $\rho \in L^q$ of exponent $q$ sufficiently large and $\beta$ is chosen appropriately. This means that there is a lack of compactness to deal with the viscosity law \eqref{ieq:Viscosity} which involves a nonlinear expression $|Du|^{p-2} Du$ of the strain rate. In addition, the estimate \eqref{ieq:WeightedLP} is made in a weighted space, and difficulties may rise when the viscosity multiplier $\nu(\rho)$ degenerates (and we have assumed it may). Finally, the presence of the viscosity multiplier, which is a nonlinear function of the density, creates another set of problems as the usual Lebesgue estimates $\rho \in L^\infty(\R_+ ; L^q (\R^d))$ do not provide compactness to handle it.

\begin{rmk}
    It should be noted that the energy space \eqref{ieq:WeightedLP} is a weighted Lebesgue space with a weight depending on the density. This means that the natural functional framework for problem \eqref{ieq:ST} has the particularity to involve the solution itself. We refer to \cite{Danchin2023} for another example where a ``dynamical'' solution space is involved in a fluid problem. Whether this structure can be used to improve our results is an ongoing work of the authors.
\end{rmk}

\subsubsection{Statement of the Main Result}

Despite these three issues, it is possible to prove the existence of global weak solutions in some distributional sense. This is the purpose of Theorem \ref{t:WeakSolutions} below, which is the main result of this paper. 

\begin{thm}\label{t:WeakSolutions}
    We work in dimension $d \geq 2$. Consider $p \in ]1, + \infty[$ and a function $\nu \in C^{0,\overline{\gamma}}(\R \setminus \{ 0 \}) \cap L^\infty(\R)$ such that $\nu(|r|) \geq \nu_* |r|^\gamma$ for all $|r| \leq 1$ and some fixed constants $\nu_*,\gamma > 0$, and setting $\overline{\gamma} = \min \{ 1,\gamma \}$. Consider exponents $q \in  ]1, 2[$ and $\sigma \in [1, + \infty]$ such that one of the following conditions is satisfied: 
    \begin{enumerate}[(i)]
        \item Sub-critical case: either we have the strict inequality
        \begin{equation}\label{eq:ConditionSubCritical}
            \frac{1}{p} \left( 1 + \frac{\gamma}{\sigma} \right) + \frac{1}{q} - \frac{1}{d} < 1 \; ;
        \end{equation}
        \item Critical case: either we have equality
        \begin{equation}\label{eq:ConditionCritical}
            \frac{1}{p} \left( 1 + \frac{\gamma}{\sigma} \right) + \frac{1}{q} - \frac{1}{d} = 1 \;,
        \end{equation}
        as well as the condition $q \geq \frac{2d}{d+2}$. In particular, this last inequality is always true when $d = 2$.
    \end{enumerate}
    Then, for any initial datum $\rho_0 \in L^q(\T^d)$ such that $1/\rho_0 \in L^\sigma(\T^d)$, there exists a weak solution $\rho \in L^\infty \big( \R_+ ; L^q(\T^d) \big)$ associated to a velocity field $u \in L^\infty(\R_+ ; L^{q'}(\T^d))$ such that $\nu(\rho)|Du|^p \in L^\infty(\R_+ ; L^1 (\T^d))$. In addition, if $\rho_0 \in L^r(\T^d)$ for some $r \in [1, + \infty]$, then 
    \begin{equation*}
        \| \rho(t) \|_{L^r} = \| \rho_0 \|_{L^r}
    \end{equation*}
    for almost all times $t \geq 0$.
\end{thm}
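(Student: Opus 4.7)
The plan is to construct a weak solution through an approximation-compactness scheme whose two crucial steps are an application of DiPerna-Lions renormalization theory for the transport equation and Minty's monotonicity trick for the nonlinear elliptic part. First, I would build approximate solutions $(\rho_n, u_n)$ by regularizing the data and the nonlinearities: mollify $\rho_0$, truncate $\nu$ away from zero to avoid degeneracy, and replace $|Du|^{p-2}Du$ by a non-degenerate approximation such as $(\delta_n + |Du|^2)^{(p-2)/2} Du$. At each regularization level, the elliptic problem is uniquely solvable by standard monotone operator theory (Browder-Minty), producing a smooth inverse Stokes map, while the transport equation is solved by the method of characteristics. This yields smooth global-in-time approximate solutions.

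Next, I would establish uniform (in $n$) a priori bounds using Proposition \ref{p:APriori}: preservation of Lebesgue norms along the smooth flow yields $\rho_n$ bounded in $L^\infty(\R_+; L^q(\T^d))$ and $1/\rho_n$ in $L^\infty(\R_+; L^\sigma(\T^d))$, while testing the elliptic equation against $u_n$ gives the weighted energy estimate $\int \nu(\rho_n)|Du_n|^p \dx \leq C\|\rho_n\|_{L^q}\|u_n\|_{L^{q'}}$. A weighted H\"older inequality based on the degeneracy bound $\nu(r) \geq \nu_*|r|^\gamma$ converts this into an unweighted bound on $Du_n$ in $L^\beta$ with $\beta = p\sigma/(\sigma + \gamma)$, and the Sobolev embedding $W^{1,\beta} \hookrightarrow L^{q'}$ closes the loop on $u_n$ precisely under the scaling conditions \eqref{eq:ConditionSubCritical} and \eqref{eq:ConditionCritical}. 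These bounds furnish weakly convergent subsequences $\rho_n \wtend \rho$, $u_n \wtend u$, $Du_n \wtend Du$ in the appropriate spaces, together with a weak limit $S$ of the nonlinear flux $\nu(\rho_n)|Du_n|^{p-2} Du_n$.

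The passage to the limit is the heart of the argument. For the transport equation, an Aubin-Lions type compactness argument (using $\partial_t \rho_n = -\div(u_n \rho_n)$ to control time regularity) upgrades weak to strong convergence of $\rho_n$ in $C_{loc}(\R_+; L^r(\T^d))$ for $r < q$; DiPerna-Lions theory, applicable since $Du \in L^\beta$ with $\beta > 1$, then ensures that $\rho$ is a renormalized solution of $\partial_t \rho + u \cdot \nabla \rho = 0$, hence preserves every $L^r$ norm of $\rho_0$. This strong convergence yields $\nu(\rho_n) \to \nu(\rho)$ almost everywhere. For the elliptic equation, Minty's trick identifies $S = \nu(\rho)|Du|^{p-2} Du$: one starts from the monotonicity inequality
\begin{equation*}
    \int_{\T^d} \nu(\rho_n) \bigl( |Du_n|^{p-2} Du_n - |D\vphi|^{p-2} D\vphi \bigr) : D(u_n - \vphi) \dx \geq 0
\end{equation*}
valid for any smooth divergence-free test field $\vphi$, then passes to the limit once one has identified $\lim_n \int \nu(\rho_n) |Du_n|^p \dx = \int S : Du \dx$. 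This energy identification is obtained by testing the approximate elliptic equation against $u_n$ and passing to the limit in the right-hand side $\int \rho_n g \cdot u_n \dx$, which uses the strong convergence of $\rho_n$ against the weak convergence of $u_n$.

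The main obstacle is the circular interplay between the two nonlinearities: the density-dependent weight $\nu(\rho)$ forbids a purely monotone identification unless $\rho_n$ converges strongly, yet strong compactness of $\rho_n$ itself requires a minimum Sobolev regularity of $u_n$ that can only come from the degenerate weighted energy bound. The precise scaling balance encoded in \eqref{eq:ConditionSubCritical} and \eqref{eq:ConditionCritical} is exactly what makes this dependency closable, and the restriction $q \geq 2d/(d+2)$ in the critical case is what secures the $L^1$ convergence of $\rho_n g \cdot u_n$ needed for the energy identification step.
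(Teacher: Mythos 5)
Your overall architecture (regularize, get uniform bounds from the weighted energy estimate, DiPerna--Lions for the transport part, Minty for the elliptic part) matches the paper's, but there is a genuine gap at the single most delicate step: the source of strong compactness for the densities. You claim that an Aubin--Lions argument, using $\partial_t \rho_n = -\D(\rho_n u_n)$ for time regularity, ``upgrades weak to strong convergence of $\rho_n$ in $C_{\rm loc}(\R_+;L^r)$ for $r<q$.'' This fails: Aubin--Lions requires a \emph{compact} embedding of the space in which $\rho_n$ is uniformly bounded into the target space, and $L^q(\T^d)$ does not embed compactly into $L^r(\T^d)$ for $r<q$ (highly oscillating bounded sequences in $L^q$ have no $L^r$-convergent subsequence). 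Since the transport equation provides no gain of spatial regularity, the only compactness Aubin--Lions yields is in negative Sobolev spaces, i.e.\ essentially weak convergence. Without strong (or at least a.e.) convergence of $\rho_n$, you cannot pass to the limit in the weight $\nu(\rho_n)$, and the entire Minty identification collapses. This is exactly why the paper's longest argument (Proposition \ref{p:StrongDensityConvergence}) exists: it uses the DiPerna--Lions \emph{stability} mechanism — showing that the weak-$*$ limits of $\eta(\rho_n)$ and $\eta(\rho_n)^2$ solve the same limit transport equation, whose uniqueness (for $u\in L^\infty(W^{1,\beta})$, via the commutator lemma) forces $h=g^2$, hence convergence of $L^2$ norms, hence strong convergence by uniform convexity, then convergence in measure of $\rho_n$ itself and finally norm convergence in $L^r_{\rm loc}(L^q)$. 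In your write-up DiPerna--Lions is only invoked \emph{after} strong convergence, to show the limit is renormalized; in the actual proof it is the engine that \emph{produces} the strong convergence.

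A secondary, more repairable issue is the construction of the approximate solutions. You propose to solve the transport equation ``by the method of characteristics,'' but the regularizations you list (mollifying $\rho_0$, truncating $\nu$ away from zero, replacing $|Du|^{p-2}Du$ by a non-degenerate version) do not by themselves make the velocity Lipschitz: the coefficient of the elliptic problem is $\nu(\rho_n)$, so the regularity of $u_n$ depends on that of $\rho_n$, which in turn depends on that of $u_n$ — a circularity. The paper breaks it by inserting a frequency cut-off directly on the velocity, $u_n = S_n v_n$, so that $u_n$ is a trigonometric polynomial regardless of the regularity of $\rho_n$; and even then, because $\nu$ is only H\"older (so the inverse Stokes map is not Lipschitz in $\rho$), existence of the approximate system requires a Cauchy--Peano rather than Picard--Lindel\"of argument, plus an extra $-\frac{1}{N}\Delta^k$ penalization to keep the ODE vector field continuous on $L^2$ without involving $\|1/\rho_N\|_{L^\sigma}$. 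Your a priori estimates, the Minty step, and the role of the scaling conditions \eqref{eq:ConditionSubCritical}--\eqref{eq:ConditionCritical} are correctly identified.
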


Before discussing the proof of Theorem \ref{t:WeakSolutions}, we bring a couple of points to the reader's attention. First of all, as will appear clearly when we perform energy estimates (Proposition \ref{p:APriori}), the condition
\begin{equation*}
    \frac{1}{p} \left( 1 + \frac{\gamma}{\sigma} \right) + \frac{1}{q} - \frac{1}{d} \leq 1
\end{equation*}
(along with the condition $q \geq \frac{2d}{d+2}$ in the critical case) is necessary for the product $\rho u$ to be well-defined, and hence for the PDE system to hold in the sense of distributions. Although we do use DiPerna-Lions theory in our proof, we point out that the weak solution $\rho$ is \textsl{not} only a renormalized solution of the transport equation, but is a full distributional solution. Secondly, when $\nu(r) \equiv 1$ and $p = 2$ (and so $\gamma = 0$), the non-Newtonian Stokes-Transport equation reduces to the usual Stokes-Transport equation \eqref{ieq:STNewtonian}. In that case, Theorem \ref{t:WeakSolutions} exist provided $\rho_0 \in L^q(\T^d)$ with 
\begin{equation*}
    q \geq q_0 := \frac{2d}{d+2}.
\end{equation*}
This is consistent with the existence of weak solutions proved in \cite{Cobb} for $q > q_0$ (although they are proved unique only for $q\geq d$, \cite{MS}, \cite{Cobb}). The fact that the optimal exponent $q_0$ can be reached is due to the use of DiPerna-Lions theory.

\subsubsection{Overview of the Proof}

In this paragraph, we explain how the difficulties mentioned above are solved in the proof of Theorem \ref{t:WeakSolutions}. 

\medskip

As is usual when dealing with evolution problems, the weak solutions of \eqref{ieq:ST} are constructed as limit points of a sequence of approximate solutions $(\rho_n)$, which solve a system of the form
\begin{equation}\label{ieq:AppSystem}
    \begin{cases}
        \partial_t \rho_n + u_n \cdot \nabla \rho_n = 0 \\
        u_n = S_n v_n\\
        v_n = \Psi(\rho_n),
    \end{cases}
\end{equation}
where $S_n$ is an approximation operator and $u = \Psi(\rho)$ is the inverse Stokes map associating the datum $\rho$ to the solution $u$ of the elliptic problem \eqref{ieq:StokesNonNewtonian}. These approximate solutions fulfill the same energy estimates as smooth solutions, and this fact provides uniform bounds with respect to the parameter $n$, namely
\begin{equation*}
    (\rho_n) \subset L^\infty(L^q) \qquad \text{and} \qquad (u_n, v_n) \subset L^\infty (W^{1, \beta}),
\end{equation*}
for some exponent $1 < \beta < p$.

\medskip

Proving the existence of these approximate solutions already requires some work, as system \eqref{ieq:AppSystem} is a nonlinear set of PDEs. Although the presence of the smoothing operator $S_n$ simplifies things very much, the degeneracy of the viscosity multiplier $\nu(r)$ already creates problems: because the function is required to behave at worst as $\nu(|r|) \geq \nu_* |r|^\gamma$ around zero (a Hölder-type condition), the map $\Psi$ may be non-Lipschitz. When writing an ODE approximation of \eqref{ieq:AppSystem}, this means that we should use the Cauchy-Peano theorem instead of Cauchy-Lipschitz (also known as Picard-Lindelöf) theory.

\medskip

Once the existence of approximate solutions is established, there are two main steps. First, in order to show suitable convergence properties for approximate solutions, it is then necessary to enjoy some continuity properties for the inverse Stokes map $\Psi$. Assuming $\rho_n \to \rho$ in a suitable topology, we can then formally write
\begin{equation*}
    v_n = \Psi(\rho_n) \to \Psi(\rho) = v.
\end{equation*}
In other words, the limit velocity $v$ is a solution of the non-Newtonian Stokes equation with righthand side input $\rho$. This allows the approximation scheme to converge. Nevertheless, the \textsl{a priori} estimates imply that it is then natural to consider convergence for the strong topology of $L^q$ in space, which is not trivial \textsl{a priori}.

\medskip

Then comes the next important idea, namely the use of DiPerna-Lions theory \cite{DpL} to obtain strong convergence of the solutions. By adapting the stability theory of \cite{DpL} for transport equations (see pp. 521--523), it is possible to show that 
\begin{equation*}
    \rho_n \tend \rho \qquad \text{in }L^r_{\rm loc}(L^q)
\end{equation*}
for all $1 < r < + \infty$. We refer to \cite{CF1} where a related method has been used to perform a singular perturbation analysis. 

\subsection*{Notation}

We summarize here the notation and conventions that will be used throughout the article.

\begin{itemize}
    \item Unless explicitly stated otherwise, all the integrals bear over the whole torus $\T^d$. Therefore $\int f = \int_{\T^d} f$.

    \item For any exponent $r \in [1, + \infty]$, we note $r'$ the conjugate exponent defined by the relation $\frac{1}{r} + \frac{1}{r'} = 1$.

    \item Consider a sequence $(f_n)_{n \geq 1}$ of elements in a metric space $X$. If the sequence is bounded in $X$, \textsl{i.e.} if there exists a constant $C > 0$ such that $\| f_n \|_X \leq C$ for all $n$, then we note $(f_n)_{n \geq 1} \subset X$.

    \item Unless otherwise mentioned, the brackets $\langle \, \cdot \, , \, \cdot \, \rangle$ should always be understood in the sense of distributions $\mc D' \times \mc D$.

\end{itemize}

\subsection*{Acknowledgements}

The work of the first author has been supported by Deutsche Forschungsgemeinschaft (DFG, German Research Foundation) Project ID 211504053 - SFB 1060.

\section{Definition of Weak Solutions}

In this section, we define the notion of weak solution for the Stokes-Transport equation \eqref{ieq:ST}. Although this is fairly straightforward, there are a few points that require our attention.

\medskip

First of all, as we have explained above, the Stokes-Transport system \eqref{ieq:ST} is an active scalar equation: this means that the only dynamical unknown is the density $\rho$, and all the other unknowns, the velocity field $u$ and the pressure $\pi$ should be expressed in terms of $\rho$. Then, as a consequence, only the properties of the density should matter when defining a notion of weak solution. In other words, a function $\rho$ is a weak solution if and only if there exists functions $u$ and $\pi$ such that the equation is satisfied in the sense of distributions.

\medskip

Taking this into consideration, we formulate the following definition.

\begin{defi}
    We consider a dimension $d \geq 2$ and an exponent $q \in [1, + \infty]$. Let $\rho_0 \in L^q$ be an initial datum. We say that a function $\rho \in L^\infty(L^q)$ is a \textsl{weak solution} of the Stokes-Transport problem \eqref{ieq:ST} associated to the initial datum $\rho_0$ is the following conditions are satisfied:
    \begin{enumerate}[(i)]
        \item There exists a velocity field $u : \R_+ \times \T^d \tend \R^d$ such that $\nu(\rho)|Du|^{p-1} \in L^1_{\rm loc}(\R_+ \times \T^d)$ and which is, for almost every time $t \in \R_+$, a weak solution of the (nonlinear) Stokes equation: in other words $\D (u) = 0$ in $\mc D' (\R_+ \times \T^d)$, the zero average condition $\fint u = 0$ is fulfilled at all times and, for any divergence-free $\phi \in \mc D (\R_+ \times \T^d ; \R^d)$, we have
        \begin{equation*}
            \iint \nu(\rho) |Du|^{p-2} Du : D \phi \dx \dt = \iint \rho g \cdot \phi \dx \dt \; ;
        \end{equation*}
        \item The velocity field is $u \in L^\infty(L^{q'})$ and $\rho$ is a weak solution of the transport equation with initial datum $\rho_0$, that is, for all $\phi \in \mc D (\R_+ \times \T^d ; \R)$, we have
        \begin{equation*}
            \iint \Big( \rho \partial_t \phi + \rho u \cdot \nabla \phi \Big) \dx \dt + \int \rho_0 \phi(0) \dx = 0.
        \end{equation*}
    \end{enumerate}
\end{defi}

\section{\textsl{A Priori} Estimates}

In this paragraph, we perform the basic energy estimates which will be used throughout the proof. 

\begin{prop}\label{p:APriori}
    Consider a smooth initial datum $\rho_0$ associated to a smooth solution $\rho$ and smooth velocity and pressure fields $u$ and $\pi$. Then, under the assumptions of Theorem \ref{t:WeakSolutions}, we have the following inequalities:
    \begin{equation*}
        \| \rho \|_{L^\infty(L^q)} \leq \| \rho_0 \|_{L^q}
    \end{equation*}
    and
    \begin{equation}\label{eq:APrioriInequalityU}
        \| D u \|_{L^\beta} \lesssim \big\| 1/\rho_0 \big\|^{\frac{\gamma}{p-1}}_{L^\sigma} \| \rho_0 \|_{L^q}^{\frac{1}{p-1}},
    \end{equation}
    where $1 < \beta < + \infty$ is defined by the relation $\frac{1}{\beta} = \frac{1}{p} \left( 1 + \frac{\gamma}{\sigma} \right)$. In addition, we also have the inequality
    \begin{equation}\label{eq:Aprioriq'-beta}
        \| u \|_{L^{q'}} \lesssim \| Du \|_{L^\beta}.
    \end{equation}
\end{prop}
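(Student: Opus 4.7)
The plan is to treat the three inequalities in order: the first is a pure transport conservation, and the second and third are intertwined because one is a Sobolev-type consequence of the other once the Stokes energy identity has been combined with a weighted Hölder inequality.

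I would begin with the transport structure. Because $u$ is smooth and divergence-free, multiplying $\partial_t \rho + u \cdot \nabla \rho = 0$ by $|\rho|^{q-2}\rho$ and integrating gives $\frac{d}{dt}\|\rho(t)\|_{L^q}^q = 0$, which yields the first inequality (in fact with equality). Applying the same reasoning to $w = 1/\rho$, which satisfies $\partial_t w + u \cdot \nabla w = 0$ away from the zero set of $\rho$, one obtains the conservation $\|1/\rho(t)\|_{L^\sigma} = \|1/\rho_0\|_{L^\sigma}$; this will be crucial when closing the bound \eqref{eq:APrioriInequalityU} at the end.

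Next, testing the Stokes equation in \eqref{ieq:ST} against $u$ eliminates the pressure by $\D u = 0$ and yields the energy identity
\begin{equation*}
    \int \nu(\rho) |Du|^p \dx = \int \rho \, g \cdot u \dx \leq |g| \, \|\rho\|_{L^q} \|u\|_{L^{q'}}.
\end{equation*}
The weighted left-hand side has to be turned into $\|Du\|_{L^\beta}^\beta$; I would do this by writing $|Du|^\beta = \bigl(\nu(\rho) |Du|^p\bigr)^{\beta/p} \, \nu(\rho)^{-\beta/p}$ and applying Hölder with the conjugate exponents $p/\beta$ and $p/(p-\beta)$:
\begin{equation*}
    \|Du\|_{L^\beta}^\beta \leq \left( \int \nu(\rho) |Du|^p \dx \right)^{\beta/p} \left( \int \nu(\rho)^{-\beta/(p-\beta)} \dx \right)^{(p-\beta)/p}.
\end{equation*}
The lower bound $\nu(|r|) \geq \nu_* |r|^\gamma$ near the origin (combined with a uniform positive lower bound away from zero coming from continuity of $\nu$ on $\R\setminus\{0\}$) majorises the second factor by $\|1/\rho\|_{L^\sigma}^{\sigma(p-\beta)/p}$, up to a multiplicative constant; the exponents line up exactly because the definition $\frac{1}{\beta} = \frac{1}{p}\bigl(1+\frac{\gamma}{\sigma}\bigr)$ forces the identity $\gamma\beta/(p-\beta) = \sigma$.

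Finally, Korn's inequality on $\T^d$ together with the zero mean condition yields $\|\nabla u\|_{L^\beta} \lesssim \|Du\|_{L^\beta}$, and the Sobolev embedding $W^{1,\beta}(\T^d) \hookrightarrow L^{q'}(\T^d)$ then gives \eqref{eq:Aprioriq'-beta}; this is precisely the point where the sub-critical/critical exponent condition $\frac{1}{\beta} + \frac{1}{q} - \frac{1}{d} \leq 1$ of Theorem \ref{t:WeakSolutions} is used. Inserting \eqref{eq:Aprioriq'-beta} into the energy identity and grouping the powers of $\|Du\|_{L^\beta}$ on the left gives
\begin{equation*}
    \|Du\|_{L^\beta}^{\beta(p-1)/p} \lesssim \|\rho\|_{L^q}^{\beta/p} \, \|1/\rho\|_{L^\sigma}^{\sigma(p-\beta)/p},
\end{equation*}
from which \eqref{eq:APrioriInequalityU} follows by raising to the power $p/(\beta(p-1))$, using the identity $\sigma(p-\beta)/\beta = \gamma$, and invoking the conservation laws of the first step to pass from $\|\rho\|_{L^q}$ and $\|1/\rho\|_{L^\sigma}$ to their values at $t=0$. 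The main obstacle is not in any isolated step but in the simultaneous balancing of exponents: $\beta$ has to be chosen so that the Hölder power $p/(p-\beta)$ matches the degeneracy exponent $\gamma$ against $\|1/\rho\|_{L^\sigma}$ while still allowing the Sobolev embedding into $L^{q'}$ to close the loop, and it is exactly this compatibility which determines the admissible range of $(p,q,\sigma,\gamma)$ in the statement.
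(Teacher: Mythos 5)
Your proof is correct and follows essentially the same route as the paper: conservation of $\|\rho\|_{L^q}$ and $\|1/\rho\|_{L^\sigma}$ by the divergence-free transport, the energy identity $\int \nu(\rho)|Du|^p = \int \rho g\cdot u$, a weighted H\"older inequality converting the left-hand side into $\|Du\|_{L^\beta}$ (your exponent bookkeeping $\gamma\beta/(p-\beta)=\sigma$ matches the paper's decomposition $Du = \nu(\rho)^{1/p}Du\cdot\nu(\rho)^{-1/p}$ exactly), and the embedding $\|u\|_{L^{q'}}\lesssim\|Du\|_{L^\beta}$ to close the loop. The only cosmetic difference is that you invoke Korn plus the classical (critical) Sobolev embedding where the paper writes $u=-2(-\Delta)^{-1}\D(Du)$ and runs through Besov embeddings, which on the torus with zero mean amounts to the same Calder\'on--Zygmund fact.
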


\begin{rmk}
    Notice that, when the viscosity coefficient $\nu(\rho)$ is degenerate, that is when $\gamma > 0$ and $\sigma < + \infty$, then we may experience a loss of regularity when compared to the usual $p$-Laplacian estimates: the velocity field is not $W^{1, p}$ but only $W^{1, \beta}$.
\end{rmk}

\begin{rmk}\label{r:Homogeneous}
    Inequality \eqref{eq:APrioriInequalityU} is consistent with the equation's ``scaling '' properties. Since the viscosity coefficient behaves, at worse, as $\nu_* |\rho|^\gamma$ in a neighborhood of $ \rho = 0$, we see that the Stokes equation
    \begin{equation*}
        - \D \big( \nu(\rho) |D u|^{p-2} Du \big) + \nabla \pi = \rho g
    \end{equation*}
    behaves roughly as (in physicist notation)
    \begin{equation*}
        \big[ \text{density} \big]^\gamma \times \big[ \text{velocity} \big]^{p-1} = \big[ \text{density} \big],
    \end{equation*}
    so that the velocity is ``homogeneous'' to a density at the power $\frac{1 - \gamma}{p-1}$. This is absolutely consistent with the righthand-side of \eqref{eq:APrioriInequalityU}, which also scales as a density to the power $\frac{1 - \gamma}{p-1}$.
\end{rmk}

\begin{proof}[Proof (of Proposition \ref{p:APriori})]
    First of all, thanks to the fact that the velocity field is assumed to be divergence-free, the flow map associated to $u$ preserves the Lebesgue measure, and therefore all Lebesgue norms. We deduce that
    \begin{equation}\label{eq:DensityLqBbound}
        \| \rho(t) \|_{L^q} = \| \rho_0 \|_{L^q} \qquad \text{for all } t \in \R_+.
    \end{equation}
    For the same reasons, we also have
    \begin{equation}\label{eq:InverseDensityBound}
        \big\| 1 / \rho(t) \big\|_{L^\sigma} = \big\| 1 / \rho_0 \|_{L^\sigma}.
    \end{equation}
    Now, let us focus on the finding estimates for the velocity field. For this, we perform an energy estimate in the elliptic equation: by taking the scalar product of the Stokes equation by $u$, we obtain
    \begin{equation*}
        \int \nu (\rho) |D u|^{p-2} Du : \nabla u = \int \rho g \cdot u.
    \end{equation*}
    Because the symmetric and skew-symmetric spaces form orthogonal subspaces for the $d \times d$ matrix scalar product, we deduce that
    \begin{equation}\label{eq:EstimationsEnergie}
        \int \nu(\rho) |Du|^p = \int \rho g \cdot u.
    \end{equation}
    There will be two steps in our estimates. First of all, we will show how the norm $\| Du \|_{L^\beta}$ can be controlled by the lefthand term in this inequality. Once we have done that, we will focus on the righthand side term and examine how to close the estimates.

    \medskip

    We then start by finding an upper bound for $\| Du \|_{L^\beta}$. The idea is to introduce the viscosity multiplier $\nu(\rho)$ in order to obtain force the apparition of the lefthand side term of \eqref{eq:EstimationsEnergie}. We have:
    \begin{equation*}
        Du = \nu(\rho)^{1/p} Du \frac{1}{\nu(\rho)^{1/p}}.
    \end{equation*}
    We remark that the $L^p$ norm of the first factor $\nu(\rho)^{1/p} Du$ is exactly (up to the $1/p$-th power) the lefthand side of \eqref{eq:EstimationsEnergie}. On the other hand, because of our assumption on the viscosity multiplier $\nu(r)$, we may bound the last factor in this product by a power of $1/\rho$, namely
    \begin{equation*}
        \frac{1}{\nu(\rho)^{1/p}} \leq \frac{1/\nu_*^{1/p}}{|\rho|^{\gamma /p}}.
    \end{equation*}
    so that the $L^{\sigma p / \gamma}$ power of $\nu(\rho)^{-1/p}$ will exactly be the $L^\sigma$ norm of $1/\rho$ (again, up to the $1/\sigma$-th power). Applying Hölder's inequality, we immediately obtain
    \begin{equation}\label{eq:DuBound1}
        \begin{split}
            \| Du \|_{L^\beta} & \leq \big\| \nu(\rho) Du \big\|_{L^p} \big\| 1/|\rho|^{\gamma/p} \big\|_{L^{\sigma p / \gamma}} \\
            & \lesssim \left( \int \nu(\rho) |Du|^p \right)^{1/p} \big\| 1/\rho \big\|_{L^\sigma}^{\gamma / p},
        \end{split}
    \end{equation}
    where the exponent $\beta$ is defined by the relation
    \begin{equation*}
        \frac{1}{\beta} = \frac{1}{p} + \frac{\gamma}{\sigma p} = \frac{1}{p} \left( 1 + \frac{\gamma}{\sigma} \right).
    \end{equation*}
    Note that this inequality is \textsl{homogeneous} in the sense of Remark \ref{r:Homogeneous}. 
    
    \medskip

    The second step is now to bound the righthand side in the energy balance equation \eqref{eq:EstimationsEnergie}. Because $\rho$ is bounded in $L^q$, we will try to show that $u$ is bounded in $L^{q'}$. With that in mind, we express $u$ as a function of the strain rate tensor: keeping in mind that $\D(u) = 0$, we may write
    \begin{equation}\label{eq:uAsAFunctionOfDu}
        u = -2(- \Delta)^{-1} \D (Du).
    \end{equation}
    In other words, $u$ is the image of $Du$ by a Fourier multiplier of order $-1$. This makes possible expressing the regularity of $u$ in terms of that $Du \in L^\beta$. Two cases must be distinguished, according to whether the sub-critical or the critical assumptions of Theorem \ref{t:WeakSolutions} hold.

    \medskip

    \textbf{First case:} critical case. First of all, note that the condition $q \geq \frac{2d}{d+2}$ is equivalent to the inequality $\frac{1}{q} - \frac{1}{d} \leq \frac{1}{2}$. In particular, we have $\beta \leq 2$ thanks to the relation $\frac{1}{\beta} + \frac{1}{q} - \frac{1}{d} = 1$. Therefore, applying the Besov embeddings of Proposition \ref{p:BesovFineEmbeddings}, we obtain that $L^\beta \subset B^0_{\beta, 2}$. As $q' \geq 2$ by assumption, Propositions \ref{p:BesovEmbeddingScaling} and \ref{p:BesovFineEmbeddings} therefore provide the chain of embeddings
    \begin{equation*}
        B^1_{\beta, 2} \subset B^0_{q', 2} \subset L^{q'},
    \end{equation*}
    since the exponents satisfy $\frac{1}{q'} = \frac{1}{\beta} - \frac{1}{d}$. By use of Proposition \ref{p:FourierMultiplier}, this leads us to the bound $\| u \|_{L^{q'}} \lesssim \| Du \|_{L^{\beta}}$, and so, by plugging this in the energy balance equation \eqref{eq:EstimationsEnergie}, we have
    \begin{equation*}
        \int \nu(\rho) |Du|^p = \int \rho g \cdot u \leq \| \rho \|_{L^q} \| u \|_{L^{q'}} \lesssim \| \rho \|_{L^q} \| Du \|_{L^\beta}.
    \end{equation*}

    \medskip

    \textbf{Second case:} sub-critical case. In that case, the Proposition \ref{p:BesovEmbeddingScaling} provide the inclusion
    \begin{equation*}
        B^1_{\beta, \infty} \subset B^s_{q', \infty} \subset L^{q'}
    \end{equation*}
    with $s = d \left( 1 - \frac{1}{\beta} - \frac{1}{q} + \frac{1}{d} \right) > 0$. In particular, thanks to Proposition \ref{p:FourierMultiplier}, we obtain the same inequality as above:
    \begin{equation*}
        \int \nu(\rho) |Du|^p = \int \rho g \cdot u \leq \| \rho \|_{L^q} \| u \|_{L^{q'}} \lesssim \| \rho \|_{L^q} \| Du \|_{L^\beta}.
    \end{equation*}

    \medskip
    
    In both cases, we may use these estimates in inequality \eqref{eq:DuBound1} in order to finish the proof of the proposition: first we write
    \begin{equation*}
        \begin{split}
            \| Du \|_{L^\beta} &\lesssim \left( \int \nu(\rho) |Du|^p \right)^{1/p} \big\| 1 / \rho \big\|_{L^\sigma}^{\gamma / p} \\
            & \lesssim \| Du \|_{L^\beta}^{1/p} \| \rho \|_{L^q}^{1/p} \big\| 1 / \rho \big\|_{L^\sigma}^{\gamma / p^2},
        \end{split}
    \end{equation*}
    and then we apply Young's inequality $ab \lesssim \epsilon a^p + \frac{1}{\epsilon}b^{p'}$ in order to absorb the $\| Du \|_{L^\beta}$ factor in the lefthand side, and finally get,
    \begin{equation}\label{eq:estimate-Du-beta-rho}
        \begin{split}
            \| Du \|_{L^\beta} & \lesssim \| \rho \|_{L^q}^{p'/p} \big\| 1 / \rho \big\|_{L^\sigma}^{\gamma p' / p^2} \\
            & \lesssim \| \rho \|_{L^q}^{\frac{1}{p-1}} \big\| 1/\rho \big\|_{L^\sigma}^{\frac{\gamma}{p-1}}.
        \end{split}
    \end{equation}
    The upper bound can be evaluated at initial time $t=0$ thanks to \eqref{eq:DensityLqBbound} and \eqref{eq:InverseDensityBound}.

\end{proof}

\section{Well-Posedness for the Stokes System}

Having this in mind, in order to show that we do define an active scalar equation via system \eqref{ieq:ST}, let us begin by proving that the Stokes problem with nonlinear viscosity

\begin{equation}\label{eq:steady-state-case}
    \begin{cases}
        - \D \left( \nu(\rho) |Du|^{p-2} Du \right) + \nabla \pi = \rho g\\
        \D(u) = 0.
    \end{cases}
\end{equation}

\noindent has a unique solution satisfying suitable estimates. Namely, the following Proposition holds.

\begin{prop}\label{p:steady-state-uniqueness}
    Consider $1 < p,q < + \infty$. For any $\rho \in L^q$ there is a unique solution $u \in W^{1,\beta}$ such that $\avint u \dx = 0$ and satisfying
    \begin{equation}\label{eq:estimate-weak-steady-state}
        \int \nu(\rho) |D u|^p \lesssim \| \rho \|_{L^q}^{\frac{1}{p-1}} \big\| 1 / \rho \big\|_{L^\sigma}^{\frac{\gamma}{p-1}}.
    \end{equation}
    We introduce the following notation: we call $\Psi(\rho)$ the unique solution associated to $\rho$, thus defining a nonlinear map $\Psi : L^q \tend W^{1, \beta}$.
\end{prop}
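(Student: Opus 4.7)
I would use the direct method of the calculus of variations, realizing the solution as the minimizer of the strictly convex coercive functional
\[
J(v) := \frac{1}{p}\int_{\T^d} \nu(\rho)|Dv|^p \dx - \int_{\T^d} \rho g \cdot v \dx
\]
on the reflexive Banach space
\[
V := \bigl\{v \in W^{1,\beta}(\T^d;\R^d) \; : \; \D(v)=0, \; \avint v \dx = 0\bigr\}.
\]
All the necessary ingredients are already essentially contained in the previous section. Coercivity follows by chaining the core inequality \eqref{eq:DuBound1} with the embedding $V \hookrightarrow L^{q'}$ used in the proof of Proposition \ref{p:APriori}: one gets $J(v) \gtrsim \|Dv\|_{L^\beta}^p - C\|\rho\|_{L^q}\|Dv\|_{L^\beta}$, which tends to $+\infty$ as $\|v\|_V \to +\infty$ since $p > 1$. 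Weak lower semicontinuity of the nonlinear term on $V$ follows from convexity together with strong-$L^\beta$ lower semicontinuity (Fatou applied along an a.e.\ convergent subsequence) via Mazur's lemma; the linear term is continuous by the same embedding. The direct method thus produces a minimizer $u$, and its Euler--Lagrange equations --- obtained by perturbing with smooth divergence-free mean-zero test fields $\phi$ and differentiating under the integral (legitimate since $\nu(\rho)|Du|^{p-1} \in L^1$ by H\"older applied to the energy bound) --- are precisely the weak form of \eqref{eq:steady-state-case}. The bound \eqref{eq:estimate-weak-steady-state} then comes from testing the equation with $u$ itself and invoking Proposition \ref{p:APriori}.

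Uniqueness is immediate from strict convexity. If $u_1, u_2 \in V$ both solve the equation, subtracting and testing with $u_1 - u_2$ yields
\[
\int \nu(\rho)\bigl(|Du_1|^{p-2}Du_1 - |Du_2|^{p-2}Du_2\bigr):D(u_1-u_2)\dx = 0.
\]
The classical strict monotonicity of $M \mapsto |M|^{p-2}M$ on $\R^{d\times d}$ for $p>1$, combined with the a.e.\ positivity of $\nu(\rho)$ (the assumption $1/\rho \in L^\sigma$ forces $\rho \neq 0$ a.e., while the H\"older-type lower bound on $\nu$ together with its continuity on $\R \setminus \{0\}$ make it strictly positive outside the origin), forces $Du_1 = Du_2$ a.e., whence $u_1 = u_2$ by the divergence-free and zero-mean constraints.

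\textbf{Main obstacle.} The most delicate technical point is the weak lower semicontinuity of $v \mapsto \int \nu(\rho)|Dv|^p \dx$ in the weak topology of $W^{1,\beta}$ when $\beta < p$: since the integrand has order $p$ while the ambient space only controls $L^\beta$, one cannot directly invoke weak-$L^p$ lower semicontinuity. The classical workaround described above (strong-$L^\beta$ lower semicontinuity by Fatou, then Mazur's lemma to upgrade to the weak topology) must be invoked carefully, paying attention to the weight $\nu(\rho)$, which can degenerate. A secondary point is the density of smooth divergence-free mean-zero vector fields in $V$, needed to make the Euler--Lagrange derivation fully rigorous; this follows from standard mollification together with the Leray projector on $\T^d$.
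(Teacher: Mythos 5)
Your proposal is correct and follows essentially the same route as the paper: both realize the solution as the unique minimizer of the strictly convex, coercive functional $\frac{1}{p}\int \nu(\rho)|Dv|^p \dx - \int \rho g\cdot v \dx$ via the direct method, recover the weak formulation as the Euler--Lagrange equation, and obtain \eqref{eq:estimate-weak-steady-state} by testing with $u$ and invoking the \textsl{a priori} estimates. The only (inessential) differences are that the paper minimizes over the weighted space $Y_\rho$ normed by $\left(\int\nu(\rho)|Dv|^p\dx\right)^{1/p}$ --- which makes weak lower semicontinuity trivial at the price of checking that $Y_\rho$ is a reflexive Banach space --- whereas you work in $W^{1,\beta}$ and handle the lower semicontinuity of the order-$p$ integrand by convexity and Mazur's lemma, and that you deduce uniqueness from the strict monotonicity of $M\mapsto|M|^{p-2}M$ together with $\nu(\rho)>0$ a.e.\ rather than from the strict convexity of the functional itself.
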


\begin{proof}
\noindent We introduce the real reflexive Banach space 
\begin{equation}\label{eq:definition-Y}
    Y_{\rho} := \left\{ v \in W^{1,1}\, / \; \avint v \dx = 0,\; \D(v) = 0,\; \int \nu(\rho)\lvert Dv \rvert^p\dx < +\infty \right\}
\end{equation}
\noindent endowed with the norm
\begin{equation*}
    \lVert v \Vert_{Y_{\rho}} := \left(\int \nu(\rho)\lvert Du\rvert^p\dx\right)^{\frac{1}{p}}
\end{equation*}
which is the norm involved on $Y_{\rho}$ by the one defined over $L^p(\T^d,\nu(\rho)\dx)$, since $\nu(\rho)\dx$ is a measure from the positivity almost everywhere and the boundedness of $\nu(\rho)$. We point out that the divergence free and mean value integrals assumption arising in the definition of $Y_{\rho}$ make sense since from our a priori estimates we have $Y_{\rho} \subset W^{1, \beta} \subset \mc D'$. Then, let us point out that the functional $A_{\rho}$ defined over $Y_{\rho}$ by
\begin{equation}\label{eq:definition-functional-Arho}
    A_{\rho}(u) := \frac{1}{p}\int \nu(\rho)\lvert Du \rvert^p \dx - \int\rho g\cdot u \dx
\end{equation}
can be rewritten as
\begin{equation}\label{eq:rewrite-Arho}
    A_{\rho}(u) := \frac{1}{p}\lVert u \Vert_{Y_{\rho}}^p - \int\rho g \cdot u \dx.
\end{equation}
In particular, the strict convexity of Lebesgue norms  implies that $A_{\rho}$ is a strictly convex functional, since for every $(u,v) \in Y_{\rho}$, $u \neq v$ and every $\theta \in (0,1)$:
\begin{align*}
    A_{\rho}(\theta u + (1- \theta)v) &= \frac{1}{p}\lVert \theta u + (1-\theta)v \Vert_{Y_{\rho}}^p - \int\rho g \cdot \left(\theta u + (1- \theta)v\right) \dx\\
    &< \theta\left(\frac{1}{p}\lVert u \Vert_{Y_{\rho}}^p - \int\rho g \cdot u \dx\right) + (1 - \theta)\left(\frac{1}{p}\lVert v \Vert_{Y_{\rho}}^p - \int\rho g \cdot v \dx\right)\\
    &=\theta A_{\rho}(u) + (1-\theta)A_{\rho}(v).
\end{align*}

\noindent The second inequality above being true since the function $t \mapsto t^p$ defined over $\mathbb{R}_+$ is strictly convex. The Euler-Lagrange formulation associated to the minimization of $A_{\rho}$ leads, computing the derivative in the direction of a test function, to the fact that each of its minimizers are a weak solution of \eqref{eq:steady-state-case}, and vice versa (thanks to the convexity of $A_\rho$). It remains to prove the existence and uniqueness of such a minimizer. Here we underline that $A_{\rho}$ is coercive over $Y_\rho$, since we get from \eqref{eq:rewrite-Arho}, using Hölder's inequality,

\begin{equation}\label{eq:coercivity-1}
    A_{\rho}(u) \geq \frac{1}{p}\lVert u \Vert_{Y_{\rho}}^p - \lVert \rho \rVert_{L^q}\lVert u \Vert_{L^{q'}}.
\end{equation}

\noindent Then, thanks to \eqref{eq:Aprioriq'-beta}, we get from \eqref{eq:coercivity-1} 

\begin{equation}\label{eq:coercivity-2}
    A_{\rho}(u) \gtrsim \lVert u \Vert_{Y_{\rho}}^p - \lVert \rho \rVert_{L^q}\lVert Du \rVert_{L^{\beta}}
\end{equation}

\noindent Using once again our \textsl{a priori} estimates it follows from \eqref{eq:coercivity-2} that, combining \eqref{eq:InverseDensityBound} and \eqref{eq:DuBound1} 

\begin{equation}\label{eq:coercivity-3}
    A_{\rho}(u) \gtrsim  \lVert u \Vert_{Y_{\rho}}^p -C(\rho_0)\left(\int\nu(\rho)\lvert Du \rvert^p\dx\right)^{\frac{1}{p}}
\end{equation}

\noindent Finally, Young's inequality applied in the right-hand side of \eqref{eq:coercivity-3} leads to the wished result, namely

\begin{equation*}
       A_{\rho}(u) \gtrsim \left({1 - \varepsilon}\right)\lVert u \Vert_{Y_{\rho}}^p -C(\varepsilon,p',\rho_0)
\end{equation*}

\noindent The previous computations mean that $A_{\rho}$ is a $p$-coercive functional over $Y_{\rho}$. Hence, it follows from \cite[Section 8.2.2., Theorem 2, Theorem 3]{evans}  that it admits a unique minimum in $Y_{\rho}$, once again denoted $u$, this last being a weak solution of \eqref{eq:steady-state-case}, which is hence unique. Testing now into the weak formulation against $u$, then using both estimates \eqref{eq:Aprioriq'-beta} and \eqref{eq:estimate-Du-beta-rho} leads to \eqref{eq:estimate-weak-steady-state}.

\end{proof}

\section{Continuity of the Inverse Stokes Map}

In the paragraph just above, we have shown that the Stokes equation can be uniquely solved: with the notations of Theorem \ref{t:WeakSolutions}, for any $\rho \in L^q$, there exists a unique $u$ solution of the Stokes equation \eqref{eq:steady-state-case} which lies in the space $W^{1, \beta}$, where $\frac{1}{\beta} = \frac{1}{p} \left( 1 + \frac{\gamma}{\sigma} \right)$. In other words, we have defined a nonlinear map
\begin{equation}\label{eq:map-uniqueness}
    \begin{array}{r|l}
         & L^q \longrightarrow W^{1,\beta} \\
        \Psi :& \\
        & \rho \longmapsto u.
    \end{array}
\end{equation}

The purpose of this section is to prove a form of continuity property for the map $\Psi$. Of course, it is too much to expect that $\Psi$ is norm continuous $L^q \tend W^{1, \beta}$. However, by relaxing the topologies, we may still obtain a continuity result. The following Proposition is based on the Minty's trick (see \cite[Lemma 2.13.]{Roubicek}).

\begin{prop}\label{p:continuity-psi}
    Let us assume that $p, q, \gamma, \sigma$ are defined as in Theorem~\ref{t:WeakSolutions} and $\beta$ as in Proposition~\ref{p:APriori}. We consider a sequence $(\rho_n)_{n \in \mathbb{N}}$ such that there exists $\rho_n \rightarrow \rho$ in $L^q$. Then, one have for $\Psi$ given by \eqref{eq:map-uniqueness}
    \begin{equation*}
        \Psi(\rho_n) \wtend \Psi(\rho) \qquad \text{in } W^{1, \beta}.
    \end{equation*}
    In other words, the nonlinear map $\Psi$ is $L^q \tend W^{1, \beta}_w$ continuous, where $W^{1, \beta}_w$ is the Sobolev space $W^{1, \beta}$ equipped with its weak topology.
\end{prop}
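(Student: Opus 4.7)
Set $u_n := \Psi(\rho_n)$ and $u := \Psi(\rho)$. The strategy will be to extract a weakly convergent subsequence from $(u_n)$, identify its limit with $u$ via Minty's monotonicity trick, and then conclude by the Urysohn subsequence principle. From Proposition \ref{p:steady-state-uniqueness} and the a priori estimate \eqref{eq:APrioriInequalityU}, together with the ambient hypothesis that $\|\rho_n\|_{L^q}$ and $\|1/\rho_n\|_{L^\sigma}$ remain uniformly bounded, the sequence $(u_n)$ is bounded in $W^{1,\beta}$, and the nonlinear flux $S_n := \nu(\rho_n)|Du_n|^{p-2}Du_n$ is bounded in $L^{p'}$ since $|S_n|^{p'} \leq \|\nu\|_{L^\infty}^{p'-1}\,\nu(\rho_n)|Du_n|^p$. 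Upon extraction I get $u_n \wtend \bar u$ in $W^{1,\beta}$, $S_n \wtend \chi$ in $L^{p'}$, and (along a sub-subsequence) $\rho_n \to \rho$ pointwise a.e. Since $\nu \in L^\infty$, dominated convergence upgrades this to $\nu(\rho_n) \to \nu(\rho)$ strongly in every $L^r(\T^d)$ with $r < +\infty$.

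\textbf{Minty's trick.} Passing to the weak limit in the equation for $u_n$ (using strong $L^q$ convergence of $\rho_n$ paired with a fixed test function), I obtain $-\D(\chi) + \nabla \pi = \rho g$ together with $\D(\bar u) = 0$. It then remains to prove $\chi = \nu(\rho)|D\bar u|^{p-2}D\bar u$, which by Proposition \ref{p:steady-state-uniqueness} will force $\bar u = u$. For any divergence-free $\phi \in C^\infty(\T^d;\R^d)$, pointwise monotonicity of the $p$-Laplacian gives
\begin{equation*}
    \int \nu(\rho_n)\bigl(|Du_n|^{p-2}Du_n - |D\phi|^{p-2}D\phi\bigr):(Du_n - D\phi)\dx \geq 0.
\end{equation*}
Expanding this inequality and using the energy identity $\int \nu(\rho_n)|Du_n|^p\dx = \int \rho_n g \cdot u_n \dx$ (whose limit is $\int \rho g\cdot \bar u\dx = \int \chi : D\bar u\dx$ by strong-weak pairing in $L^q \times L^{q'}$), together with the strong convergence $\nu(\rho_n)|D\phi|^{p-2}D\phi \to \nu(\rho)|D\phi|^{p-2}D\phi$ in $L^{\beta'}$ against the weak convergence $Du_n \wtend D\bar u$ in $L^{\beta}$, I reach in the limit
\begin{equation*}
    \int \bigl(\chi - \nu(\rho)|D\phi|^{p-2}D\phi\bigr):(D\bar u - D\phi)\dx \geq 0.
\end{equation*}
A hemicontinuity step (take $\phi = \bar u - t\psi$ for any divergence-free $\psi$, divide by $t>0$ and let $t \to 0^+$, then swap the sign of $\psi$) then yields $\int (\chi - \nu(\rho)|D\bar u|^{p-2}D\bar u):D\psi \dx = 0$ for every divergence-free $\psi$. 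Consequently $\chi - \nu(\rho)|D\bar u|^{p-2}D\bar u$ is a pressure gradient, so $\bar u$ weakly solves \eqref{eq:steady-state-case} with density $\rho$. By the uniqueness part of Proposition \ref{p:steady-state-uniqueness}, $\bar u = u$; since the limit is independent of the extracted subsequence, the whole sequence $\Psi(\rho_n)$ converges weakly to $\Psi(\rho)$ in $W^{1,\beta}$.

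\textbf{Main obstacle.} I expect the delicate point to be the limit passage in Minty's inequality, and specifically the mixed bilinear term $\int \nu(\rho_n)|D\phi|^{p-2}D\phi : Du_n \dx$. This demands strong convergence of the weight $\nu(\rho_n)$ in an $L^{\beta'}$-compatible topology, which is feasible here because $\nu \in L^\infty$, because $|D\phi|^{p-2}D\phi$ is bounded for smooth $\phi$, and because the subcritical/critical exponent constraints of Theorem \ref{t:WeakSolutions} ensure $\beta \leq p$ so that the weak $L^\beta$ convergence of $Du_n$ pairs correctly with the strong convergence. A subtler wrinkle is that $\nu$ is allowed to vanish at zero and be discontinuous there, but the $L^\infty$ bound combined with a.e. convergence of $\rho_n$ and dominated convergence circumvents any need for pointwise continuity on the (possibly positive-measure) null set $\{\rho = 0\}$.
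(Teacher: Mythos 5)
Your proposal is correct and follows essentially the same route as the paper: a uniform bound on $u_n$ in $W^{1,\beta}$ and on the fluxes in $L^{p'}$, Minty's monotonicity trick where the troublesome term $\int \nu(\rho_n)|Du_n|^p$ is replaced via the energy identity by $\int \rho_n g\cdot u_n$ (which passes to the limit by strong--weak pairing), and a hemicontinuity step to identify $\chi = \nu(\rho)|D\bar u|^{p-2}D\bar u$. You are in fact slightly more explicit than the paper in closing the argument, since you invoke the uniqueness statement of Proposition \ref{p:steady-state-uniqueness} to identify $\bar u = \Psi(\rho)$ and the subsequence principle to upgrade to convergence of the full sequence, steps the paper leaves implicit.
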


\begin{proof}
First, let us introduce for  $\phi \in \mathcal{D}_{\D} := \left\{\varphi \in \mathcal{D}\, /\; \mathrm{div}(\varphi) = 0\right\}$ the functional 
\begin{equation*}
    X_n(\phi) := \int \nu(\rho_n)\Big(\lvert Du_n \rvert^{p-2}D(u_n) - \lvert D\phi \rvert^{p-2}D\phi\Big):(Du_n - D\phi)\dx
\end{equation*}
where $u_n = \Psi(\rho_n)$ is the unique solution of \eqref{eq:steady-state-case} associated to $\rho_n$, as given by Proposition \ref{p:steady-state-uniqueness}. Thanks to inequality \eqref{eq:estimate-weak-steady-state} we are sure that the functional $X_n$ is well-defined on the space $\mc D_{\D}$. In addition, the functionals $X_n$ are nonnegative due to the following lemma.

\begin{lemma}\label{l:monotinicityInequality}
 Consider $n \in \mathbb{N}$ and $1 < p < +\infty$. Then for every regular and divergence-free function $\phi \in \mathcal{D}_{\D}$, we have a monotonicity inequality:
 \begin{equation}\label{eq:monotonicity-inequality}
     X_n(\phi) \geq 0.
 \end{equation}
\end{lemma}

\begin{proof}
We begin with the observation that the functional
\begin{equation*}
A_n(\phi) := \frac{1}{p}\int \nu(\rho_n)\lvert D\phi\rvert^p\dx
\end{equation*}
is convex because $p > 1$. Furthermore, $A_n$ acts as a potential with respect to $X_n$. This means that $X_n(\phi)$ is formally given by the relation
\begin{equation}\label{eq:BracketsConvexity}
    X_n(\phi) = \big\langle {\rm d} A_n (u_n) - {\rm d} A_n (\phi), u_n - \phi \big\rangle,
\end{equation}
where ${\rm d} A_n (f)$ is the differential of $A_n$ evaluated at a point $f$. Still formally, the convexity of $A_n$ shows that the above is non-negative $X_n(\phi)$.

\medskip

The argument as it is is not entirely complete: this is because while $\phi \in C^\infty$ is very regular, it may not be the case of $u_n = \Psi(\rho_n)$ which is defined as the unique solution of a nonlinear elliptic problem. However, we may argue by density: the functional $X_n$ is continuous on the space 
\begin{equation*}
    Y_n := \Big\{ f \in W^{1, 1}, \quad Df \in L^p \big( \nu(\rho_n) \dx \big) \Big\},
\end{equation*}
and the viscosity multiplier $\nu(r)$ is bounded, so $C^\infty$ is a dense subspace of $L^p \big( \nu(\rho_n) \dx \big)$. It is therefore enough to show that the brackets in \eqref{eq:BracketsConvexity} are non-negative when assuming that $u_n$ is smooth.
\end{proof}

\noindent Using the weak formulation of \eqref{eq:steady-state-case} into \eqref{eq:monotonicity-inequality}, we get for all $\phi \in \mc D_{\D}$:

\begin{equation}\label{eq:minty-step-1}
    X_n(\phi) = \int\rho_n(u_n-\phi)g\dx - \int \nu(\rho_n)\lvert D\phi \rvert^{p-2}D\phi:(Du_n - D\phi)\dx \geq 0.
\end{equation}

\noindent We can now pass to the limit on $n$ in this expression. First, we have that $\rho_n \to \rho$ a.e. since $\rho_n \tend \rho$ in $L^q$, and $Du_n \rightharpoonup Du$ in $L^{\beta}$, in view of \eqref{eq:DuBound1} and \eqref{eq:estimate-weak-steady-state}. Also we have that $\nu(\rho_n) \tend \nu(\rho)$ in $L^{\beta '}$. Thus we get:

\begin{equation}\label{eq:first-limit}
    \int \nu(\rho_n)\lvert D\phi \rvert^{p-2}D\phi:(Du_n - D\phi)\dx \underset{n \to +\infty}{\tend} \int \nu(\rho)\lvert D\phi \rvert^{p-2}D\phi:(Du - D\phi)\dx.
\end{equation}

\noindent Moreover, we recall that from \eqref{eq:DuBound1} and \eqref{eq:estimate-weak-steady-state} we have the boundedness $(u_n) \subset W^{1,\beta} \subset L^{q'}$ which leads to weak convergence $u_n \rightharpoonup u$ in $L^{q'}$, up to an extraction. Furthermore, the strong convergence $\rho_n \longrightarrow \rho$ in $L^q$ leads to distributional convergence of the product $\rho_n u_n$. If $\psi \in \mc D$, we have
\begin{equation*}
    \begin{split}
        \big| \langle \rho_n u_n - \rho u , \psi \rangle \big| & = \big| \langle \rho_n - \rho, u_n \psi \rangle \big| + \big| \langle u_n - u, \rho \psi \rangle \big| \\
        & = \| \rho_n - \rho \|_{L^q} \| u_n \psi \|_{L^{q'}} + \big| \langle u_n - u, \rho \psi \rangle \big| \\
        & \longrightarrow 0 \qquad \text{as } n \rightarrow + \infty.
    \end{split}
\end{equation*}

\noindent In other words,
\begin{equation}\label{eq:limit-distribution-n}
    \rho_nu_n \underset{n \to +\infty}{\longrightarrow} \rho u \quad \mathrm{in }\; \mc D'.
\end{equation}

\noindent From \eqref{eq:limit-distribution-n}, \eqref{eq:first-limit} and \eqref{eq:minty-step-1}, we can let $n \rightarrow + \infty$ in the functional $X_n(\phi)$. For every test function $\phi \in \mc D_{\D}$ we have

\begin{equation}\label{eq:second-limit}
    X_n(\phi) \underset{n \to +\infty}{\tend} X(\phi) = \int\rho(u-\phi)g\dx - \int \nu(\rho)\lvert D\phi \rvert^{p-2}D\phi:(Du - D\phi)\dx \geq 0.
\end{equation}

\noindent On the other hand, the sequence $\big( \nu(\rho_n) |Du_n|^{p-2}Du \big)_n$ is bounded in the space $L^{p'}$, thanks to the inequality
\begin{equation*}
    \lVert \nu(\rho_n) \lvert Du_n \rvert^{p-1} \rVert_{L^{p'}}^{p'} \leq \lVert \nu \rVert_{L^{\infty}}^{p'-1}\int\nu(\rho_n)\lvert Du_n \rvert^p\dx \leq \lVert \nu \rVert_{L^{\infty}}^{p'-1}\lVert \rho_0^{-1} \rVert_{L^{\s}}^{\s\left(1 - \frac{p'}{r}\right)}\lVert \rho_0 \rVert_{L^{q}}^{p'}.
\end{equation*}

\noindent This means that it possesses a weak limit $\chi$ up to an extraction: there exists a $\chi \in L^{p'}$ such that
\begin{equation}\label{eq:weak-limit-nonlinearity}
    \nu(\rho_n) \lvert Du_n \rvert^{p-2}Du_n \rightharpoonup \chi \quad \mathrm{in} \quad L^{p'}.
\end{equation}

\noindent The main objective is now to show that $\chi = \nu(\rho) \lvert Du \rvert^{p-2}Du$, and that we have the convergence desired in the equation. For this, we will use the convergence properties of the functional $X_n$ we have explored above. Because the functions $(\rho_n, u_n)$ solve the Stokes problem \eqref{eq:steady-state-case} we get that for all test function $\psi \in \mc D_{\D}$:

\begin{equation}\label{eq:weak-formulation-nn}
    \int \nu(\rho_n)\lvert Du_n \rvert^{p-2} Du_n : D\psi \dx = \int \rho_ng\psi\dx.
\end{equation}

\noindent This implies from \eqref{eq:limit-distribution-n} and \eqref{eq:weak-limit-nonlinearity}, that the function $\chi$ fulfills the following relation: for every $\psi \in \mc D'_{\D}$,

\begin{equation}\label{eq:limit-weak-formulation-nn}
    \int \chi : D\psi \dx = \int \rho g\psi\dx.
\end{equation}

\noindent Combining \eqref{eq:limit-weak-formulation-nn} together with \eqref{eq:second-limit} we get:

\begin{equation}\label{eq:minty-estimate-2}
    X(\phi) = \int \Big(\chi : D(u - \phi) - \nu(\rho)\lvert D\phi \rvert^{p-2}D\phi:(Du - D\phi) \Big) \dx \geq 0.
\end{equation}

\noindent To make use of this inequality and prove the proposition, we would now like to test into \eqref{eq:minty-estimate-2} against some well-chosen $\phi$ functions. So, choosing $\phi = u + \lambda \psi$, for $\psi \in \mc D_{\D}$ and $\lambda > 0$ in \eqref{eq:minty-estimate-2}, then divide by $\lambda$ leads to:

\begin{equation}\label{eq:minty-estimate-3}
    -\int\Big(\chi : D\psi  + \nu(\rho)\lvert D(u + \lambda \psi) \rvert^{p-2}D(v + \lambda\psi): D\psi \Big) \dx \geq 0.
\end{equation}

\noindent By passing to the limit as $\lambda \to 0$ into \eqref{eq:minty-estimate-3}, we infer that

\begin{equation*}
    \int \nu(\rho) \lvert Du \rvert^{p-2}Dv:D\psi\dx \geq \int \chi : D\psi\dx.
\end{equation*}

\noindent Following the same line of arguments with $\phi = u - \lambda\psi$ leads to the converse inequality, and then to the equality:

\begin{equation}\label{eq:minty-limit-2}
    \int \nu(\rho) \lvert Du \rvert^{p-2}Du:D\psi\dx = \int \chi : D\psi\dx.
\end{equation}

\noindent Now, thanks to \eqref{eq:weak-formulation-nn}, we obtain by means of \eqref{eq:minty-limit-2} that $u$ satisfies the weak formulation of \eqref{eq:steady-state-case}, namely

\begin{equation*}
    \int \big(\nu(\rho) \lvert Du \rvert^{p-2}Du:D\psi - \rho g\psi\big) \dx = 0.
\end{equation*}

\end{proof}

\begin{rmk}
    We point out that the structure of the proof of Proposition \ref{p:continuity-psi} is quite robust. In particular, if we had added to the Stokes equation an elliptic term, say $- \nu_0 \Delta^k$ for some constant $\nu_0 > 0$, so as to have instead
    \begin{equation*}
        \begin{cases}
            - \nu_0 \Delta^k u - \D \big( \nu(\rho) |Du|^{p-2}Du \big) + \nabla \pi = \rho g \\
            \D(u) = 0,
        \end{cases}
    \end{equation*}
    then the result would still apply: one could use the methods of Proposition \ref{p:steady-state-uniqueness} to obtain, for every $\rho \in L^q$, the existence and uniqueness of a solution $u \in L^p \big( \nu(\rho) \dx \big) \cap H^k$, noted $\Psi_0(\rho)$, and a straightforward adaptation of the proof immediately above would yield continuity $\Psi_0 : L^q \tend W^{1, \beta}_w$ regardless of the value of $\nu_0$.
\end{rmk}

\section{Approximate Solutions}

In this paragraph, we will construct a family of functions regular $(\rho_n)$ that solve an approximate system, namely
\begin{equation}\label{eq:ApproximateSystem}
    \begin{cases}
        \partial_t \rho_n + \D (\rho_n u_n) = 0\\
        u_n = S_n v_n \\
        - \D \big( \nu(\rho_n) |Dv_n|^{p-2} Dv_n \big) + \nabla \pi_n = \rho_n g\\
        \D(v_n) = 0.
    \end{cases}
\end{equation}
In the above, the operator $S_n$ is the Littlewood-Paley approximation operator given from \eqref{eq:littlewood-paley-blocks} and the ensuing discussion. We equip system \eqref{eq:ApproximateSystem} with the following initial values: for every $n$,
\begin{equation}\label{eq:ApproximateInitialValues}
    \rho_n(0) = S_n \rho_0.
\end{equation}
It follows that the approximate initial data are uniformly bounded in $L^q$ with respect to $q$, that is $\| S_n \rho_n \|_{L^q} \lesssim \| \rho_0 \|_{L^q}$. Our reasoning is the following: formally, taking the limit $n \rightarrow + \infty$ lets us recover the non-Newtonian Stokes-Transport system \eqref{ieq:ST}. In addition, the velocity fields $u_n$ are smooth functions with respect to the space variable (they are in fact trigonometric polynomials), and so the solutions $\rho_n$ are also smooth functions. In particular, the \textsl{a priori} estimates of Proposition \ref{p:APriori} hold for the approximate solutions.

\medskip

However, the approximate system \eqref{eq:ApproximateSystem} is a set of highly nonlinear PDEs, so that even the existence of solutions to \eqref{eq:ApproximateSystem} is not obvious! In the next proposition, we make sure that \eqref{eq:ApproximateSystem} does indeed have global smooth solutions.

\begin{prop}\label{p:ApproximateSolutions}
    Consider any $\ell \geq 0$. Then the initial value problem \eqref{eq:ApproximateSystem}-\eqref{eq:ApproximateInitialValues} has a solution $\rho_n \in L^\infty(L^2) \cap W^{1, \infty}_{\rm loc}(H^\ell)$ such that, for almost every time $t \geq 0$, the function $v_n$ is a solution of the Stokes system \eqref{eq:steady-state-case} with righthand side input $\rho_n$. In particular, inequality \eqref{eq:steady-state-case} holds: for almost all times,
    \begin{equation*}
        \int \nu(\rho_n) |D v_n|^p \lesssim \| \rho_n \|_{L^q}^{\frac{1}{p-1}} \big\| 1 / \rho_n \big\|_{L^\sigma}^{\frac{\gamma}{p-1}},
    \end{equation*}
    and so inequality \eqref{eq:DuBound1} provides $v_n \in L^\infty(W^{1, \beta})$.
\end{prop}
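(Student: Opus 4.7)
The strategy is a Schauder fixed-point argument applied to the map sending a candidate velocity field $u$ to $S_n \circ \Psi$ evaluated on the solution of the transport equation driven by $u$. The key structural fact to exploit is that $u_n = S_n v_n$ must live in the finite-dimensional subspace $X_n \subset L^2$ of zero-mean, divergence-free trigonometric polynomials whose Fourier support lies inside the cutoff of $S_n$; this opens the door to compactness methods despite the lack of any Lipschitz estimate on $\Psi$.

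Concretely, for $T > 0$ and $u \in C([0,T]; X_n)$, let $\rho[u]$ denote the classical solution of $\partial_t \rho + u \cdot \nabla \rho = 0$ with $\rho(0) = S_n\rho_0$. This is well-defined via characteristics, because $u$ is smooth in $x$ and continuous in $t$ and $S_n\rho_0$ is itself a trigonometric polynomial; in particular $\rho[u]$ is smooth in both variables with $H^\ell$ bounds depending only on $\ell$, $n$, $\rho_0$ and $\|u\|_{L^\infty_t X_n}$. Then set $\Theta(u)(t) := S_n \Psi(\rho[u](t))$, so that a fixed point of $\Theta$ produces a solution of \eqref{eq:ApproximateSystem}--\eqref{eq:ApproximateInitialValues} on $[0,T]$.

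Let $K \subset C([0,T]; X_n)$ be the closed convex set of functions bounded by some $M$ in $X_n$ and equicontinuous with a modulus $\omega$; by Arzelà--Ascoli and the finite-dimensionality of $X_n$, the set $K$ is compact. Three ingredients must be checked. For \emph{stability}, the divergence-free condition yields conservation of all Lebesgue norms of $\rho[u]$, so $\|\rho[u](t)\|_{L^q}$ and $\|1/\rho[u](t)\|_{L^\sigma}$ remain at their initial values independently of $u \in K$; Proposition~\ref{p:steady-state-uniqueness} bounds $\Psi(\rho[u](t))$ in $W^{1,\beta}$, and Bernstein inequalities on $X_n$ transfer this into the $X_n$-bound $\|\Theta(u)(t)\|_{X_n} \le M$ needed, with some $M$ depending only on $n$ and $\rho_0$. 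For \emph{equicontinuity}, the set $\{\rho[u](t) : u \in K,\; t \in [0,T]\}$ is bounded in $H^\ell$ by the flow representation $\rho[u](t) = (S_n\rho_0) \circ X_u(t)^{-1}$, hence relatively compact in $L^q$; on such a compactum, the map $S_n \circ \Psi : L^q \to X_n$ is \emph{uniformly} continuous since Proposition~\ref{p:continuity-psi} yields its continuity (weak-$W^{1,\beta}$ convergence collapses to strong convergence in the finite-dimensional space $X_n$), and the Lipschitz-in-$t$ continuity of $\rho[u]$ in $L^q$, stemming from the flow being $M$-Lipschitz in time, then transfers into equicontinuity of $\Theta(u)$ with a modulus $\omega$ independent of $u \in K$. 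For \emph{continuity of $\Theta$}, standard stability of the linear transport equation gives $\rho[u_k] \to \rho[u]$ in $C([0,T]; L^q)$ whenever $u_k \to u$ in $C([0,T]; X_n)$, and the continuity of $S_n \circ \Psi$ propagates this into continuity of $\Theta$. Schauder's theorem then yields a fixed point in $K$, giving a solution on $[0,T]$.

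Global existence is immediate since all the a priori constants governing $M$ and $\omega$ (namely $\|\rho_0\|_{L^q}$ and $\|1/S_n\rho_0\|_{L^\sigma}$) are preserved in time, so the construction iterates on $[T,2T], [2T,3T], \ldots$ without loss. The stated regularity $\rho_n \in L^\infty(L^2) \cap W^{1,\infty}_{\rm loc}(H^\ell)$ follows from the flow representation $\rho_n(t) = (S_n\rho_0) \circ X_n(t)^{-1}$, using that $S_n\rho_0 \in C^\infty$ and that $u_n$ is smooth in $x$, together with $\partial_t \rho_n = -u_n \cdot \nabla \rho_n$. The main obstacle in this plan is really the equicontinuity step: because Proposition~\ref{p:continuity-psi} only provides a qualitative weak continuity for $\Psi$ with no explicit modulus, one genuinely has to use the extra spatial regularity of $\rho[u]$ coming from $S_n\rho_0$ in order to confine the range of $\rho[u]$ to a compact subset of $L^q$, and thereby upgrade continuity of $S_n \circ \Psi$ into uniform continuity on that subset.
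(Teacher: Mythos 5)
Your Schauder scheme on the velocity variable is, in outline, a reasonable alternative to the paper's finite-dimensional Cauchy--Peano argument, but it has a genuine gap at the stability step: you bound $\Theta(u)(t) = S_n \Psi(\rho[u](t))$ by invoking Proposition \ref{p:steady-state-uniqueness}, whose estimate \eqref{eq:estimate-weak-steady-state} carries the factor $\| 1/\rho[u](t) \|_{L^\sigma}^{\gamma/(p-1)} = \| 1/S_n \rho_0 \|_{L^\sigma}^{\gamma/(p-1)}$. There is no reason for this quantity to be finite. The hypothesis of Theorem \ref{t:WeakSolutions} is $1/\rho_0 \in L^\sigma$, but $S_n$ does not commute with taking reciprocals and is not even positivity-preserving; $S_n \rho_0$ is a smooth function (a trigonometric polynomial) which generically vanishes to first order on a hypersurface, so that $1/|S_n\rho_0|^\sigma$ fails to be integrable for $\sigma \geq 1$. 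With $M = +\infty$ the invariance $\Theta(K) \subset K$ collapses, and the same obstruction infects the other two ingredients: the coercivity argument behind Proposition \ref{p:steady-state-uniqueness} and the compactness/Minty argument behind Proposition \ref{p:continuity-psi} both pass through \eqref{eq:DuBound1}, i.e.\ through a uniform bound on $\| 1/\rho \|_{L^\sigma}$, so even the well-definedness and continuity of $S_n \circ \Psi$ on the compactum $\mc K \subset L^q$ are not justified when the viscosity degenerates at $\rho = 0$. (If $\nu$ were bounded below, none of this would arise and your argument would essentially work.)

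This is precisely the difficulty the paper's proof is built around: it introduces a second approximation layer, replacing the Stokes operator by its penalization $-\frac{1}{N}\Delta^k v_N - \D\big(\nu(\rho_N)|Dv_N|^{p-2}Dv_N\big)$, which yields the estimate $\| v_N \|_{H^k} \lesssim N \| \rho_N \|_{L^2}$ \emph{independent of} $\| 1/\rho_N \|_{L^\sigma}$; existence then follows from Cauchy--Peano in the finite-dimensional space $\ker(\Id - E_N)$, and the penalization is removed afterwards by a Minty-type passage to the limit $N \to \infty$ (Steps 4--6). To repair your proposal you would have to do the same thing inside the fixed-point scheme: run Schauder with $\Psi$ replaced by the penalized inverse Stokes map $\Psi_N$ (for which the range bound $M$ and the continuity of $S_n \circ \Psi_N$ on $L^2$ are genuinely available without any control on $1/\rho$), and only then let $N \to +\infty$. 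The choice of Schauder on the velocity versus Peano on the density is cosmetic; the penalization is the essential missing ingredient.
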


\begin{proof}
In order to show existence of solutions to the approximate system \eqref{eq:ApproximateSystem}, we introduce another approximate system, for which solutions will be shown to exists through the Cauchy-Peano theorem. Let us consider, for any fixed $n$ and for all $N \geq 1$, the system
\begin{equation}\label{eq:ApproximateSystem2}
    \begin{cases}
        \partial_t \rho_N + E_N\D (\rho_N u_N) = 0\\
        u_N = S_n v_N \\
        - \frac{1}{N}\Delta^k v_N- \D \big( \nu(\rho_N) |Dv_N|^{p-2} Dv_N \big) + \nabla \pi_N = \rho_N g\\
        \D(v_N) = 0,
    \end{cases}
\end{equation}
where $E_N$ is a Fourier truncation operator defined by
\begin{equation*}
    \forall f \in \mc S, \qquad \what{E_N f}(\xi) = \mathds{1}_{|\xi| \leq N} \what{f}(\xi).
\end{equation*}
and $k \geq 1$ is an exponent which will be fixed later on. In fact, $E_N$ is the $L^2$-orthogonal projection on the space of functions whose Fourier transform is supported in the ball $B(0, N)$. We equip system \eqref{eq:ApproximateSystem2} with the initial datum 
\begin{equation}\label{eq:AppInitialDatum}
    \rho_N(0) = E_N S_n \rho_0.
\end{equation}

\medskip

\textbf{STEP 1: defining an ODE system}. First of all, we show that for any $N \geq 1$, system \eqref{eq:ApproximateSystem2} possesses a global solution $\rho_N$. We will do this by means of the Cauchy-Peano theorem applied in a subspace of $L^2$. We define a map $F_N : L^2 \tend L^2$ by the following steps, which will be justified immediately below:
\begin{enumerate}
    \item For any $\rho_N \in L^2$, we define $v_N$ to be the unique solution of a Stokes system with improved viscosity:
    \begin{equation}\label{eq:StokesImprovedViscosity}
        \begin{cases}
            \frac{-1}{N} \Delta^k v_N - \D \big( \nu(\rho_N) |D v_N|^{p-2} Dv_N \big) + \nabla \pi_N = \rho_N g \\
            \D(v_N) = 0 \, ;
        \end{cases}
    \end{equation}
    \item We then define the trigonometric polynomial $u_N$ by the relation $u_N = S_n v_N$;
    \item Finally, we set $F_N(\rho_N) := \D(\rho_N u_N)$, which is a trigonometric polynomial, and hence in $L^2$.
\end{enumerate}
In the three steps above, only the first one requires additional explanations, as Proposition \ref{p:steady-state-uniqueness} does not give \textsl{per se} solutions to this modified Stokes equation. However, the proof of Proposition \ref{p:steady-state-uniqueness} can be adapted with straightforward modifications, simply by replacing the functional \eqref{eq:definition-functional-Arho} with
\begin{equation*}
    A_\rho(u) = \frac{1}{2N} \int |\nabla^k u|^2 + \frac{1}{p} \int \nu(\rho) |Du|^p - \int \rho g \cdot u
\end{equation*}
and by working in the space $Y_\rho \cap H^k$ instead of \eqref{eq:definition-Y}. Because we have assumed that $q < 2$, the same computations as in the proof of Proposition \ref{p:steady-state-uniqueness} show that the functional $A_\rho$ is strictly convex and coercive. We deduce, in the same way, that for every $\rho_N \in L^2$, system \eqref{eq:StokesImprovedViscosity} possesses a unique weak solution $v_N \in H^k \cap Y_\rho$ that satisfies $\int v_N = 0$ and
\begin{equation*}
    \| v_N \|_{H^k} \lesssim \sqrt{N} \| \rho_N \|_{L^2}.
\end{equation*}

\begin{rmk}
    Introducing a penalized term acting as an improved viscosity by adding $- \frac{1}{N} \Delta^k$ in the Stokes problem is a necessary step: it allows $v_N$ to be estimated in some space by an upper bound that does not involve $\| 1 / \rho_N \|_{L^\sigma}$, thus enabling the Cauchy-Peano theorem to work in the energy space $L^2$. 
\end{rmk}

\textbf{STEP 2: applying the Cauchy-Peano theorem}. If we wish to apply the Cauchy-Peano theorem to the ODE $\partial_t \rho_N = F_N(\rho_N)$, we must check that $F_N : L^2 \longrightarrow L^2$ is continuous. Consider then two functions $\rho_N, \rho_N' \in L^2$ to which we associate $v_N$ and $v_N'$ solutions of the Stokes problem \eqref{eq:StokesImprovedViscosity}, and $(u_N, u_N') = S_n (v_N, v_N')$. Then, we have
\begin{equation}\label{eq:StabilityPeano2}
    \begin{split}
        \big\| F_N(\rho_N) - F_N(\rho_N') \big\|_{H^M} & \lesssim C(N) \big\| E_N \D \big( (\rho_N - \rho_N') u_N \big) \big\|_{H^M} \\
        & \qquad \qquad + C(N) \big\| E_N \D \big( (u_N - u_N') \rho_N' \big) \big\|_{H^M} \\
        & \lesssim C(N) \| \rho_N - \rho_N' \|_{L^2} \| u_N \|_{L^\infty} + C(N) \| u_N - u_N' \|_{L^\infty} \| \rho_N' \|_{L^2}.
    \end{split}
\end{equation}
We turn our attention to the difference $u_N - u_N'$. We subtract the elliptic equation satisfied by $v_N$ by the one satisfied by $v_N'$, multiply by the function $v_N - v_N'$ and integrate by parts so as to obtain
\begin{equation}\label{eq:StabilityPeano1}
    \begin{split}
        \frac{1}{N} \int \big| \nabla^k v_N - \nabla^k v_N' \big|^2 + &  \int \Big( \nu(\rho_N) |D v_N|^{p-2} D v_N - \nu(\rho_N') |Dv_N'|^{p-2} Dv_N' \Big) : D(v_N - v_N')\\
        & \qquad \qquad = \int (\rho_N - \rho_N') g \cdot (v_N - v_N').
    \end{split}
\end{equation}
Let us decompose the large integral above into two parts, in order to benefit from the monotonicity inequalities fulfilled by the Stokes operator: we have
\begin{multline*}
\int \Big( \nu(\rho_N) |D v_N|^{p-2} D v_N - \nu(\rho_N') |Dv_N'|^{p-2} Dv_N' \Big) : D(v_N - v_N') \\
= \int \nu(\rho_N) \Big( |Dv_N|^{p-2} Dv_N - |Dv_N'|^{p-2} Dv_N' \Big) : D(v_N - v_N') \\
+ \int \Big( \nu(\rho_N) - \nu(\rho_N') \Big) |Dv_N'|^{p-2} Dv_N' : D(v_N - v_N')
\end{multline*}
By using the convexity of the $L^p \big( \nu(\rho_N) \dx \big)$ norm exactly as in Lemma \ref{l:monotinicityInequality}, we see that the first integral in the righthand side above is nonnegative. Therefore, we deduce from equation \eqref{eq:StabilityPeano1} that
\begin{equation*}
    \frac{1}{N} \| v_N - v_N' \|_{H^k} \leq \big\| \nu(\rho_N) - \nu(\rho_N') \big\|_{L^\infty} \| Dv_N'\|_{L^\infty}^{p-1} \| Dv_N - Dv_N' \|_{L^1} + \| \rho_N - \rho_N' \|_{L^2} \| v_N - v_N' \|_{L^2}.
\end{equation*}
By fixing the exponent $k$ so that $k > 1 + d/2$, we may benefit from the Sobolev embedding $H^k \subset W^{1, \infty}$. Moreover, we use the fact that the function $\nu(r)$ is, by assumption, $C^{0, \bar{\gamma}}$-Hölder, where $\bar{\gamma} = \min \{ \gamma, 1 \}$. The combination of these two facts yields the inequality
\begin{equation*}
    \frac{1}{N} \| v_N - v_N' \|_{H^k} \lesssim \| \rho_N - \rho_N' \|_{L^\infty}^\gamma \| Dv_N' \|_{H^k}^{p-1} + \| \rho_N - \rho_N' \|_{L^2}.
\end{equation*}
Plugging this inequality in \eqref{eq:StabilityPeano2} and using the fact that the operator $E_N$ is in fact bounded in $H^{-1} \tend L^2$ shows that the map $F_N : L^2 \longrightarrow L^2$ is $\bar{\gamma}$-Hölder, and therefore continuous. The Cauchy-Peano applied in the \textsl{finite dimensional} space
\begin{equation*}
    X_N := \ker ({\rm Id} - E_N) \qquad \text{endowed with the norm } \| \, . \, \|_{L^2}
\end{equation*}
consequently provides the existence of a local solution $\rho_N \in C^1([0, T_N[ ; X_N)$ for the approximate problem.

\medskip

\textbf{STEP 3: lifespan of the approximate solutions}. The next step is to prove that the lifespan $T_N$ of the approximate solution $\rho_N$ can be extended to reach $T_N = + \infty$. For this, we note that by performing a simple energy estimate in the transport equation in \eqref{eq:ApproximateSystem2}, we obtain conservation of the $L^2$ norms: since $\rho_N = E_N \rho_N$, we have
\begin{equation*}
    \frac{1}{2} \frac{\rm d}{\dt} \int |\rho_N|^2 = - \int \rho_N E_N \D (\rho_N u_N) = - \int \rho_N \D( \rho_N u_N) = 0.
\end{equation*}
and so 
\begin{equation}\label{eq:GlobalBoundsL2}
    \| \rho_N(t) \|_{L^2} = \| E_N S_n \rho_0 \|_{L^2} \leq \| S_n \rho_0 \|_{L^2}.
\end{equation}
We deduce that the approximate solutions always have a bounded time derivative: $\| \partial_t \rho_N \|_{L^2} \leq \max_{\| r \|_{L^2} \leq 1} \| F(r) \|_{L^2}$. In particular, this means that the Cauchy-Peano solution $\rho_N : [0, T_N[ \longrightarrow L^2$ necessarily has a limit at time $T_N^-$, as in that case the sequence
\begin{equation*}
    \rho_N \left(T_N - \frac{1}{j} \right) = \rho_N(0) + \int_0^{T_N - \frac{1}{j}} F(\rho_N) \dt
\end{equation*}
is Cauchy as $j \rightarrow + \infty$. By using the Cauchy-Peano theorem again with initial value $\rho_N(T_N^{\,-})$, we construct an (continuous and not necessarily unique) extension of the ODE solution on $[0, \bar{T_N}[$, which we continue to call $\rho_N$. This extension is $C^1$ with respect to time by virtue of the equation: $\partial_t \rho_N = F_N(\rho_N) \in C^0(L^2)$. We infer that the maximal time $T_N^*$ beyond which there exists no extension of the solution must in fact be $T_N^* = + \infty$, and hence deduce the existence (but not uniqueness) of a global ODE solution $\rho_N : \R_+ \longrightarrow L^2$.

\medskip

\textbf{STEP 4: uniform bounds and weak convergence}. We now prepare to take the limit $N \rightarrow + \infty$ in the approximate system \eqref{eq:ApproximateSystem2}. With this in mind, we use Proposition \ref{p:steady-state-uniqueness} and \eqref{eq:GlobalBoundsL2}, and write the following uniform estimates (recall that $q < 2$).
\begin{equation*}
    (\rho_N) \subset L^\infty(L^2) \subset L^\infty(L^q) \qquad \text{and} \qquad (v_N) \subset L^\infty(W^{1, \beta}) \subset L^\infty(L^{q'}).
\end{equation*}
Concerning the velocity fields $u_N = S_n v_N$, the presence of the regularization operator $S_n$ implies that the $u_n$ are trigonometric polynomials of uniform degree (at most $C2^n$), and so $(u_N) \subset L^\infty(H^{\ell})$ for any $\ell \in \Z$. In addition, by using the transport equation to trade space regularity for time regularity, we see that, provided $\ell > d/2$ so that $H^\ell \subset L^\infty$,
\begin{equation}\label{eq:appTradeRegularity}
    \partial_t \rho_N = - \D (\rho_N u_N) \subset L^\infty (H^{-1}),
\end{equation}
so that we get, for every finite $T > 0$, the uniform bound $\rho_N \subset W^{1, \infty}_T(H^{-1}$. Ascoli's theorem lets us deduce strong convergence of the densities to some limit
\begin{equation*}
    \rho_N \tend_{N \rightarrow + \infty} \rho_n \qquad \text{in } C^0_T(H^{-2}).
\end{equation*}
Concerning the sequences $(v_N)$ and $(u_N)$, the uniform bounds above provide weak-$(*)$ convergence
\begin{equation*}
    v_N \wtend^* v_n \qquad \text{in } L^\infty(W^{1, \beta} \cap L^{q'})
\end{equation*}
and, for all $\ell \in \Z$,
\begin{equation*}
    u_N \wtend^* u_n \qquad \text{in } L^\infty(H^\ell).
\end{equation*}
The high regularity the velocity field has transfers to the density, since the initial datum $\rho_N(0) = E_N S_n \rho_0$ is uniformly bounded with respect to $N$ in $H^\ell$. By invoking Theorem \ref{t:TransportRegularity}, we have, provided that $\ell > 1 + d/2$ so that $H^\ell \subset W^{1, \infty}$,
\begin{equation*}
    \| \rho_N \|_{L^\infty_T(H^\ell)} \leq \| E_N S_n \rho_0 \|_{H^\ell} \exp \left( C \int_0^T \| u_N \|_{H^\ell} \dt \right).
\end{equation*}
Because of the inequality $\| E_N S_n \rho_0\|_{H^\ell} \leq \| S_n \rho_0 \|_{H^\ell}$ and the uniform bounds on the sequence $(u_N)$, we deduce from the above the estimate $(\rho_N) \subset L^\infty_T(H^\ell)$ for all sufficiently large $\ell$. In particular, the interpolation inequality 
\begin{equation*}
    \| \rho_N - \rho_n \|_{L^2} \leq \| \rho_N - \rho_n \|_{H^{-2}}^{1/2} \| \rho_N - \rho_n \|_{H^2}^{1/2}
\end{equation*}
shows that the convergence of the densities $(\rho_N)$ is in fact strong:
\begin{equation}\label{eq:AppStrongL2Convergence}
    \rho_N \tend \rho_n \qquad \text {in } C^0_T(L^2).
\end{equation}
Finally, these high regularity bounds $(\rho_N), (u_N) \subset L^\infty_T(H^\ell)$ show that, by using the transport equation as in \eqref{eq:appTradeRegularity} to trade space for time regularity, we have $(\rho_N) \subset W^{1, \infty}_T(H^{\ell - 1})$.

 \medskip
 
It remains to make sure that the functions $(\rho_n, u_n, v_n)$ are solutions of the target problem \eqref{eq:ApproximateSystem}.

\medskip

\textbf{STEP 5: the transport equation}. We first check that $\rho_n$ is a solution of the transport equation with velocity field $u_n$, which is absolutely straightforward given the strong convergence \eqref{eq:AppStrongL2Convergence} of the densities. Indeed, thanks to the uniform bounds $(u_N) \subset L^\infty(H^\ell)$, we have weak convergence of the product
\begin{equation*}
    \rho_N u_N \wtend \rho_n u_n \qquad \text{in } L^\infty(L^2).
\end{equation*}
Likewise, we have strong convergence of the initial density defined in \eqref{eq:AppInitialDatum},
\begin{equation*}
    E_N S_n \rho_0 \longrightarrow S_n \rho_0 \qquad \text{in } H^\ell .
\end{equation*}
This implies that $\rho_n$ is indeed a solution of the transport equation with the appropriate initial datum:
\begin{equation*}
    \begin{cases}
        \partial_t \rho_n + \D (\rho_n u_n) = 0 \\
        \rho_n(0) = S_n \rho_0.
    \end{cases}
\end{equation*}

\medskip

\textbf{STEP 6: the Stokes equation}. In order to take the limit $N \rightarrow + \infty$ in the Stokes equation, we resort to the continuity properties of the inverse Stokes map, as in Proposition \ref{p:continuity-psi}. Let us explain. We note $\Psi_N : L^q \tend W^{1, \beta}$ the map that associates any $\rho_N \in L^q$ to the weak solution $v_N$ of the Stokes problem \eqref{eq:StokesImprovedViscosity}. By the remarks we made in STEP 1 above, this map is well-defined. Now, as we have already noticed, the presence of the elliptic summand $- \frac{1}{N}\Delta^k$ does not change any of the convexity or coercivity properties required to solve the Stokes problem. More than that, a quick glance at the proof of Proposition shows that the minor modifications (\textit{e.g.} the addition of a non negative term to the functionals $X_n$) of the exact same argument yield a similar ``continuity'' property: assume that we have strong convergence
\begin{equation}\label{eq:AppLqConvergence}
    \rho_N \tend_{N \rightarrow + \infty} \rho_n \qquad \text{in } L^q.
\end{equation}
Then we may deduce the weak convergence 
\begin{equation*}
    \Psi_N (\rho_N) \wtend_{N \rightarrow + \infty} \Psi(\rho_n) \qquad \text{in } W^{1, \beta}.
\end{equation*}
Because $q < 2$; the strong $C^0_T(L^2)$ convergence \eqref{eq:AppStrongL2Convergence} of the sequence $(\rho_N)$ implies that the convergence \eqref{eq:AppLqConvergence} must take place at (almost) every time $t \in [0, T[$, so that $v_n$ is indeed a solution of the elliptic problem associated to $\rho_n$ at almost every time:
\begin{equation*}
    \begin{cases}
        - \D \big( \nu(\rho_n) |Dv_n|^{p-2}Dv_n \big) + \nabla \pi_n = \rho_n g \\
        \D(v_n) = 0.
    \end{cases}
\end{equation*}
This ends the proof of Proposition \ref{p:ApproximateSolutions}.

\end{proof}

\section{Strong Convergence of the Densities}

As explained in the previous subsection, Proposition \ref{p:ApproximateSolutions} provides a sequence of approximate solutions $(\rho_n, u_n, v_n)$ which solve problem \eqref{eq:ApproximateSystem}. We now begin to study the limit $n \rightarrow + \infty$ by focusing on the densities $(\rho_n)$, on which we establish strong convergence.

\begin{prop}\label{p:StrongDensityConvergence}
    Let $(\rho_n, u_n, v_n)$ be the sequence of approximate solutions as given in \ref{p:ApproximateSolutions}. We have, up to an extraction, strong and pointwise convergence of the densities, namely
    \begin{equation*}
        \rho_n \tend \rho \qquad \text{in } L^r_{\rm loc}(L^q) \text{ and a.e. on } \R_+ \times \T^d.
    \end{equation*}
    for every $1 < r < + \infty$. In addition, still up to an extraction, we have weak-$(*)$ convergence of the velocities $u_n \wtend^* u$ in $L^\infty(W^{1, \beta})$ and the function $\rho$ is a weak solution of the transport equation:
    \begin{equation}\label{eq:TransportLimit}
        \begin{cases}
            \partial_t \rho + \D(\rho u ) = 0 \\
            \rho(0) = \rho_0.
        \end{cases}
    \end{equation}
\end{prop}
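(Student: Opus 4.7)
The plan is to exploit the uniform bounds from Proposition \ref{p:ApproximateSolutions} to extract weak limits, pass to the limit in the transport equation via a space compactness argument, and then upgrade weak convergence to strong convergence through DiPerna-Lions renormalization combined with a conservation-of-norms argument.

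\textbf{Extraction of limits and the transport equation.} Proposition \ref{p:ApproximateSolutions} furnishes the uniform bounds $(\rho_n) \subset L^\infty(L^q)$ and $(v_n), (u_n) \subset L^\infty(W^{1, \beta})$. Banach-Alaoglu yields (not-relabelled) subsequences with $\rho_n \wtend^* \rho$ in $L^\infty(L^q)$ and $v_n \wtend^* v$, $u_n \wtend^* u$ in $L^\infty(W^{1, \beta})$; because $S_n$ converges strongly to the identity on test functions, one easily sees $u = v$. By the Sobolev embedding of Proposition \ref{p:APriori}, $u_n \in L^\infty(L^{q'})$ uniformly, so $\rho_n u_n \in L^\infty(L^1)$ uniformly, and therefore the transport equation gives $\partial_t \rho_n = - \D(\rho_n u_n)$ bounded in $L^\infty(H^{-M})$ for some large $M$. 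An Aubin-Lions argument based on the compact embedding $L^q \hookrightarrow\hookrightarrow H^{-s}$ (valid for $s > d(1/q - 1/2)_+$) then produces
\begin{equation*}
    \rho_n \tend \rho \qquad \text{in } C^0_{{\rm loc}}(\R_+ ; H^{-s}),
\end{equation*}
which, in conjunction with the weak-$(*)$ convergence $u_n \wtend^* u$ in $L^\infty(L^{q'})$, is enough to identify the limit of $\rho_n u_n$ in the distributional sense and conclude that $\rho$ solves \eqref{eq:TransportLimit} with initial datum $\rho_0$.

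\textbf{Renormalization and conservation of norms.} Since $u \in L^\infty(W^{1,\beta})$ with $\D(u) = 0$ and $\rho \in L^\infty(L^q)$ with $\rho u \in L^\infty(L^1)$, the limit \eqref{eq:TransportLimit} falls within the scope of the DiPerna-Lions theory \cite{DpL}, and $\rho$ is therefore a renormalized solution. Applying the renormalization with $\beta(r) = |r|^q$, via a standard truncation procedure and Fatou, then integrating in space and using $\D(u) = 0$, we obtain
\begin{equation*}
    \| \rho(t) \|_{L^q} = \| \rho_0 \|_{L^q} \qquad \text{for almost every } t \geq 0.
\end{equation*}
For the approximate solutions, by contrast, $\rho_n$ is smooth and transported by the divergence-free field $u_n$, so the associated flow preserves the Lebesgue measure and hence $\| \rho_n(t) \|_{L^q} = \| S_n \rho_0 \|_{L^q}$ holds pointwise in time. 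As $S_n \rho_0 \tend \rho_0$ in $L^q$, we deduce $\| \rho_n(t) \|_{L^q} \tend \| \rho(t) \|_{L^q}$ as $n \rightarrow + \infty$, for every $t \geq 0$.

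\textbf{Strong convergence and conclusion.} The strong convergence $\rho_n \tend \rho$ in $C^0_{\rm loc}(H^{-s})$ implies, at every time $t$, that $\rho_n(t) \tend \rho(t)$ in $H^{-s}$; combined with the uniform $L^q$ bound and the uniqueness of the limit, this gives $\rho_n(t) \wtend \rho(t)$ in $L^q$ for every $t \geq 0$. Because $1 < q < 2$, the space $L^q$ is uniformly convex and satisfies the Radon-Riesz property, so the convergence of norms established above promotes the weak convergence to strong convergence $\rho_n(t) \tend \rho(t)$ in $L^q$ at every time $t$. The uniform bound $\| \rho_n(t) \|_{L^q} \leq \| \rho_0 \|_{L^q}$ allows dominated convergence to upgrade this to convergence in $L^r_{\rm loc}(\R_+ ; L^q)$ for every $1 < r < + \infty$, and a diagonal subsequence extraction yields pointwise convergence almost everywhere on $\R_+ \times \T^d$. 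The hardest point in the program is the rigorous application of DiPerna-Lions renormalization with the limited regularity $u \in L^\infty(W^{1, \beta})$ for densities in $L^q$: one must verify that the commutator $[u \cdot \nabla, \cdot]$ vanishes in the required sense under the integrability hypotheses of Theorem \ref{t:WeakSolutions}, which in turn is what ensures the product $\rho u$ is well-defined in $L^1_{\rm loc}$.
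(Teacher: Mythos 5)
Your overall strategy (extract weak limits, pass to the limit in the transport equation, then upgrade to strong convergence via renormalization and convergence of norms) is in the right spirit, but the pivotal step is a genuine gap. You apply DiPerna--Lions renormalization \emph{directly to the limit solution} $\rho \in L^\infty(L^q)$ with velocity $u \in L^\infty(W^{1,\beta})$ in order to obtain $\|\rho(t)\|_{L^q} = \|\rho_0\|_{L^q}$. For a distributional solution in $L^\infty(L^q)$ to be renormalized, the commutator lemma (Lemma II.1 in \cite{DpL}, Lemma \ref{l:Commutator} here) requires $q \geq \beta'$, i.e.\ $\frac{1}{q} + \frac{1}{\beta} \leq 1$, so that $\rho\, \nabla u$ and hence the commutator live in $L^1_{\rm loc}$. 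Under the hypotheses of Theorem \ref{t:WeakSolutions} one only has $\frac{1}{\beta} + \frac{1}{q} \leq 1 + \frac{1}{d}$, and in the critical case this is an equality: for instance in the Newtonian case $p=2$, $\gamma = 0$, $q = \frac{2d}{d+2}$ one gets $\frac1q + \frac1\beta = 1 + \frac1d > 1$. So the renormalization of the limit is exactly what cannot be taken for granted; you flag it yourself as ``the hardest point'' but do not resolve it, and the direct route does not close. This also undermines your Radon--Riesz step, since the norm convergence $\|\rho_n(t)\|_{L^q} \to \|\rho(t)\|_{L^q}$ rests on the unproven identity $\|\rho(t)\|_{L^q} = \|\rho_0\|_{L^q}$; without it, weak lower semicontinuity only gives $\|\rho(t)\|_{L^q} \leq \|\rho_0\|_{L^q}$, leaving open precisely the oscillation/concentration one must exclude.

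The paper's proof is architected to avoid this obstruction. It renormalizes the \emph{approximate} solutions $\rho_n$, which are smooth, so $\eta(\rho_n)$ solves the transport equation by the plain chain rule, with no commutator argument needed. The renormalized quantities $\eta(\rho_n)$ are bounded in $L^\infty(L^\infty)$, a class in which DiPerna--Lions well-posedness for a $W^{1,\beta}$ field always applies (since $\infty \geq \beta'$). One then shows, via a Besov product lemma and compactness in $B^{-s}_{\infty,\infty}$, that the weak-$(*)$ limits $g$ of $\eta(\rho_n)$ and $h$ of $\eta(\rho_n)^2$ both solve transport equations, and uniqueness in $L^\infty$ forces $h = g^2$; this yields convergence of the $L^2$ norms and hence strong and a.e.\ convergence of $\eta(\rho_n)$. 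Almost-everywhere convergence of $\rho_n$ itself is then recovered through the truncations $\eta_k$ and a Cauchy criterion for convergence in measure, and only afterwards is the conservation $\|\rho(t)\|_{L^q} = \|\rho_0\|_{L^q}$ obtained by passing to the limit (first $n \to \infty$, then $k \to \infty$) in the exact conservation laws satisfied by $\eta_k(\rho_n)$. If you want to salvage your argument, you must insert this detour through bounded renormalized quantities; the direct renormalization of $\rho$ is not available in the stated range of exponents.
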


    \begin{proof}
    For the proof of Proposition \ref{p:StrongDensityConvergence}, we use ideas from DiPerna-Lions theory of transport equations, which is expected to work on our problem because our \textsl{a priori} estimates show that the velocity field is expected to have regularity $u \in L^\infty(W^{1, \beta})$, where $\beta$ is as defined in Proposition \ref{p:APriori}. Indeed, assumptions \eqref{eq:ConditionSubCritical} or \eqref{eq:ConditionCritical} show that we must have $1 < \beta < + \infty$ so that Theorem \ref{t:CZ} applied to \eqref{eq:uAsAFunctionOfDu} gives 
    \begin{equation*}
        \| \nabla u \|_{L^\beta} = 2 \| \nabla (- \Delta)^{-1} \D (Du) \|_{L^\beta} \lesssim \| Du \|_{L^\beta}
    \end{equation*}

    \medskip

    First of all, we notice that due to the form of the approximate system \eqref{eq:ApproximateSystem}, all the bounds of Proposition \ref{p:ApproximateSolutions} in fact hold for the approximate solutions. Namely, we have the following uniform estimates:
    \begin{equation}\label{eq:UniformBoundsDensity}
        (\rho_n)_n \subset L^\infty(L^q), \qquad \text{and} \qquad \big( 1/\rho_n \big)_n \subset L^\infty(L^\sigma),
    \end{equation}
    as well as
    \begin{equation}\label{eq:UniformBoundsVelocity}
        (u_n)_n \subset L^\infty(W^{1, \beta}) \subset L^\infty(L^{q'}) \qquad \text{and} \qquad (v_n)_n \subset L^\infty(W^{1, \beta}) \subset L^\infty(L^{q'}).
    \end{equation}
    One of the main points of DiPerna-Lions theory is to deal with low regularity solutions by using renormalization functions: more precisely, we say that a function $\eta \in C^1(\R) \cap L^\infty(\R)$ is \textsl{admissible} if $\eta' > 0$ everywhere. For one such function $\eta$, we may multiply the first equation in the approximate system \eqref{eq:ApproximateSystem} by $\eta'(\rho_n)$ and obtain a new solution of the transport equation:
    \begin{equation}\label{eq:RenormalizedApproximate}
        \begin{cases}
            \partial_t \eta(\rho_n) + \D \big( \eta(\rho_n) u_n \big) = 0\\
            \eta(\rho_n(0)) = \eta(S_n \rho_0).
        \end{cases}
    \end{equation}
    This equation is much easier to handle: indeed the sequence $\eta(\rho_n)$ is bounded in the space $L^\infty(L^\infty)$. Furthermore, thanks to the fact that $\eta' > 0$, no information is lost, as \eqref{eq:RenormalizedApproximate} implies that $\rho_n$ is a solution of the approximate transport equation in \eqref{eq:ApproximateSystem}.

    \medskip

    Now, thanks to the remark that $\big( \eta(\rho_n) \big) \subset L^\infty(L^\infty)$, we deduce weak-$(*)$ convergence in that space up to an extraction: there exists a $g \in L^\infty(L^\infty)$ such that
    \begin{equation*}
        \eta(\rho_n) \wtend^* g \qquad \text{in } L^\infty(L^\infty).
    \end{equation*}
    In addition, the uniform bounds of \ref{eq:UniformBoundsVelocity} also provide, up to an extraction, the weak-$(*)$ convergence
    \begin{equation*}
        u_n \wtend^* u \qquad \text{in } L^\infty(W^{1, \beta} \cap L^{q'}).
    \end{equation*}
    We will prove that $g$ is a solution of the transport equation with velocity field $u$. For this, we need to obtain strong compactness on the sequence $\big( \eta(\rho_n) \big)$ in order to take the limit in the product $\eta(\rho_n) u_n$. Note that we have the uniform bound $\big( \eta(\rho_n ) \big) \subset L^\infty(L^{q'})$, so that by exploiting equation \eqref{eq:RenormalizedApproximate} and the embedding $L^{q'} \subset B^0_{q', \infty}$, we get
    \begin{equation*}
        \begin{split}
            \partial_t \eta(\rho_n) & = - \D \big( \eta(\rho_n ) \big) \\
            & \subset L^\infty(B^{-1}_{q, \infty}) \subset L^\infty(B^{-1-d/q'}_{\infty, \infty}).
        \end{split}
    \end{equation*}
    The last inclusion above is an application of Proposition \ref{p:BesovEmbeddingScaling}. The above therefore shows that the sequence $\big( \eta(\rho_n) \big)$ is uniformly bounded in the space $W^{1, \infty}_T (B^{- 1 - d/q'}_{\infty, \infty})$ for every finite $T > 0$, in addition to being already bounded in the space $L^\infty(L^\infty) \subset L^\infty(B^0_{\infty, \infty})$. Therefore, an interpolation argument shows that, for a small enough $\theta \in ]0, 1[$, we may write an inclusion in a Hölder-Besov space: 
    \begin{equation*}
        L^\infty(B^0_{\infty, \infty}) \cap W^{1, \infty}_T(B^{-1-d/q'}_{\infty, \infty}) \subset C^{0, \theta}_T(B^{-s/2}_{\infty, \infty}),
    \end{equation*}
    where the regularity exponent $-s/2 < 0$ may be taken as close to zero as desired by taking $\theta > 0$ as close to zero as necessary. In particular, the Ascoli theorem provides strong convergence
    \begin{equation}\label{eq:EtaStrongBesovConvergence}
        \eta(\rho_n) \tend g \qquad \text{in } L^\infty_T(B^{-s}_{\infty, \infty}).
    \end{equation}
    In order to use this strong convergence and prove that $g$ is a solution of the transport equation with velocity field $u$, we state and prove the following product lemma.

    \begin{lemma}\label{l:ProductBesov1}
        Assume that $0 < s < 1$. The function product $(f, h) \mapsto fh$ is continuous in the $B^1_{\beta, \infty} \times B^{-s}_{\infty, \infty} \tend B^{-s}_{\beta, \infty}$ topology.
    \end{lemma}

    \begin{proof}[Proof (of the lemma)]
        This is a direct application of the Bony decomposition. We write the product $fh$ as a sum
        \begin{equation*}
            fh = \mc T_f(h) + \mc T_h(f) + \mc R(f, h).
        \end{equation*}
        On the one hand, the first paraproduct may be evaluated in the following way: for $j \geq -1$, 
        \begin{equation*}
            \begin{split}
                \| \Delta_j \mc T_f(h) \|_{L^\beta} & \lesssim \sum_{|j-m| \leq 4} \| S_{m-1}f \|_{L^\beta} \| \Delta_m h \|_{L^\infty} \\
                & \lesssim 2^{js} \| f \|_{L^\beta} \| h \|_{B^{-s}_{\infty, \infty}} \\
                & \lesssim 2^{js} \| f \|_{B^1_{\beta, \infty}} \| h \|_{B^{-s}_{\infty, \infty}},
            \end{split}
        \end{equation*}
    so that we have $\| \mc T_f(h) \|_{B^{-s}_{\beta, \infty}} \lesssim \| f \|_{B^1_{\beta, \infty}} \| h \|_{B^{-s}_{\infty, \infty}}$. Likewise, the second paraproduct is bounded in the same manner: for any $j \geq -1$,
    \begin{equation*}
        \begin{split}
            \| \Delta_j \mc T_h (f) \|_{L^\beta} & \lesssim \sum_{|j-m| \leq 4} \| S_{m-1}h \|_{L^\infty} \| \Delta_m f \|_{L^\beta} \\
            & \lesssim \frac{1}{s} 2^{js} \| h \|_{B^{-s}_{\infty, \infty}} 2^{-j} \| f \|_{B^1_{\beta, \infty}} \\
            & \lesssim 2^{(1-s)j} \| f \|_{B^1_{\beta, \infty}} \| h \|_{B^{-s}_{\infty, \infty}},
        \end{split}
    \end{equation*}
    and this gives the inequality $\| \mc T_h(f) \|_{B^{1-s}_{\beta, \infty}}$. Finally, the remainder term can be bounded by using Proposition~\ref{p:op}. We have
    \begin{equation*}
        \| \mc R(f, h) \|_{B^{1-s}_{\beta, \infty}} \lesssim \| f \|_{B^1_{\beta, \infty}} \| h \|_{B^{-s}_{\infty, \infty}}.
    \end{equation*}
    The combination of all three inequalities proves the lemma.
    \end{proof}

    Let us apply Lemma \ref{l:ProductBesov1} in order to study the convergence of the product $\eta(\rho_n)u_n$ as we let $n \rightarrow + \infty$. For any function $\phi \in \mc D ([0, T[ \times \T^d)$, we have
    \begin{equation*}
        \big| \big\langle \eta(\rho_n)u_n - gu, \phi \big\rangle \big| \lesssim \| \eta(\rho_n) - g \|_{L^\infty_T(B^{-s}_{\beta, \infty})} \| u_n \|_{L^\infty_T(B^1_{\beta, \infty})} \| \phi \|_{L^1_T(B^{s-1}_{\beta', 1})} + \big| \langle u_n - u, g \phi \rangle \big|.
    \end{equation*}
    Firstly, the strong convergence property \eqref{eq:EtaStrongBesovConvergence} shows that the first summand in the previous upper bound tends to zero as $n \rightarrow + \infty$. Secondly, we know that $u_n$ converges to $u$ in the weak-$(*)$ topology of $L^\infty(L^{q'})$, while the function $g \phi$ is an element of $L^1(L^\infty)$, and so belongs to the predual space $L^1(L^q)$ (recall that $1 \leq q < d$). This means that the bracket in the above inequality also converges to zero. Finally, the initial data for the transport equation \eqref{eq:RenormalizedApproximate} converges strongly: for any $r < + \infty$, dominated convergence yields
    \begin{equation*}
        \eta(S_n \rho_0) \tend \eta(\rho_0) \qquad \text{in } L^r
    \end{equation*}
    for all $r < + \infty$. This shows that the limit $g$ is a weak solution of the initial value problem
    \begin{equation}\label{eq:gTransport}
        \begin{cases}
            \partial_t g + \D (gu) = 0\\
            g(0) = \eta(\rho_0).
        \end{cases}
    \end{equation}
    We will show that in fact $g = \eta(\rho)$, and thereby deduce that $\rho$ is a solution of the transport equation.

    \medskip

    By performing the same steps by replacing the function $\eta$ by $\eta^2$, we see that the functions $\eta(\rho_n)^2$ have a limit
    \begin{equation*}
        \eta(\rho_n)^2 \wtend h \qquad \text{in } L^\infty(L^\infty)
    \end{equation*}
    which is a solution of the initial value problem
    \begin{equation}\label{eq:SquareTransport}
        \begin{cases}
            \partial_t h + \D (hu) = 0\\
            h(0) = \eta(\rho_0)^2.
        \end{cases}
    \end{equation}
    Now, let us show that $g^2$ is also a solution of the initial value problem \eqref{eq:SquareTransport}. We wish to multiply the transport equation \eqref{eq:gTransport} by $g$. In order to make sure we can do this, we first go through a regularization procedure: consider a mollification sequence $(\psi_\epsilon)_{\epsilon > 0}$ and set $g_\epsilon = \psi_\epsilon * g$. Then $g_\epsilon$ solves the equation
    \begin{equation*}
        \partial_t g_\epsilon + u \cdot \nabla g_\epsilon = \big[ u \cdot \nabla, \psi_\epsilon * \big] g,
    \end{equation*}
    where, in the above, the brackets $[A, B] = AB - BA$ represent a commutator of operators, where $\psi_\epsilon * : f \mapsto \psi_\epsilon * f$ is the convolution operator and where $u \cdot \nabla$ must be understood in the weak sense $u \cdot \nabla = \D (fu)$. Multiplying the above by $g_\epsilon$ gives
    \begin{equation*}
        \partial_t (g_\epsilon^2) + \D (g_\epsilon^2 u) = g_\epsilon \big[ u \cdot \nabla, \psi_\epsilon * \big] g.
    \end{equation*}
    To check that $g^2$ also is a solution of the transport equation \eqref{eq:SquareTransport}, we only must check that the righthand side tends to zero as $\epsilon \rightarrow 0^+$. This is the case, thanks to Lemma \ref{l:Commutator}, which provides 
    \begin{equation*}
        \big[ u \cdot \nabla, \psi_\epsilon * \big] g \tend 0 \qquad \text{in } L^1_T(L^\beta).
    \end{equation*}
    Consequently, $g^2 \in L^\infty(L^\infty)$ and $h \in L^\infty(L^\infty)$ solve the same initial value problem \eqref{eq:SquareTransport}. In addition, the transport equation with a $L^\infty(W^{1, \beta})$ coefficient is well-posed in that space, by Theorem \ref{t:TransportWP}, so that we must have $h = g^2$ and therefore
    \begin{equation*}
        \eta(\rho_n)^2 \wtend^* g^2 \qquad \text{in } L^\infty(L^\infty).
    \end{equation*}

    \medskip

    Let us prove that the weak convergence above implies strong convergence of the $\eta(\rho_n)$. By fixing any $T > 0$ and using $\mathds{1}_{[0, T]} \in L^1(L^1)$ as a test function in the weak-$(*)$ convergence, we see that
    \begin{equation*}
        \| \eta(\rho_n) \|^2_{L^2_T(L^2)} = \langle \eta(\rho_n)^2, \mathds{1}_{[0, T]} \rangle \; \tend \; \langle g^2, \mathds{1}_{[0, T]} \rangle =  \| g \|^2_{L^2_T(L^2)}.
    \end{equation*}
    In other words, the sequence $\big( \eta(\rho_n) \big)$ converges weakly-$(*)$ in $L^\infty (L^\infty)$, and therefore weakly in the $L^2_T(L^2)$, and additionally the norms converge. Therefore, Proposition~\ref{p:NormConvergence} implies convergence in the norm topology of $L^2_T(L^2)$, and so
    \begin{equation}\label{eq:EtaStrongConvergence}
        \eta(\rho_n) \tend g \qquad \text{in } L^2_{\rm loc}(L^2),
    \end{equation}
    and, up to extracting again, the convergence is also true almost everywhere in $\R_+ \times \T^d$.

    \medskip
    
    We now show that the pointwise convergence of the $\eta(\rho_n)$ implies convergence almost everywhere of the $\rho_n$. For this, we resort to an appropriate choice of admissible functions $\eta$ and the notion of convergence in measure (see Definition \ref{d:ConvergenceMeasure}). We first prove that the sequence $(\rho_n)$ is convergent in measure on every $[0, T] \times \T^d$ by resorting to a Cauchy criterion (see Proposition \ref{p:MeasureCauchy}). Fix an $\epsilon > 0$ and two indices $m > n \geq 1$. Moreover, we choose a sequence of ``renormalization functions'' $(\eta_k)$ such that $\eta \in C_b(\mathbb{R})$ and
    \begin{equation}\label{eq:admissible-functions}
        \left\{\begin{array}{l}
        \eta_k \in C_b^1(\mathbb{R}) \medskip \\
        0 < \eta_k' \leq 1 \text{ in } \mathbb{R} \medskip \\
        \eta_k(r) = r \text{ in } [-k,k].
        \end{array}\right.
    \end{equation}
    Then, by decomposing the product $[0, T] \times \T^d$ according to whether $|\rho_n|, |\rho_m| \leq k$ or not, we have
    \begin{equation*}
        \begin{split}
            \big\{ |\rho_n - \rho_m| \geq \epsilon \big\} & = \big\{ |\rho_n - \rho_m| \geq \epsilon \big\} \cap \big\{ |\rho_n| \leq k \text{ and } |\rho_m| \leq k \big\} \\
            & \qquad \qquad \qquad \sqcup \big\{ |\rho_n - \rho_m| \geq \epsilon \big\} \cap \big\{ |\rho_n| > k \text{ or } |\rho_m| > k \big\}.
        \end{split}
    \end{equation*}
    On the one hand, the Markov inequality provides the bound 
    \begin{equation*}
        \begin{split}
            {\rm meas} \big\{ |\rho_n| > k \text{ or } |\rho_m| > k \big\} & \lesssim \frac{1}{k^q} \int_0^T \int \Big( |\rho_m|^q + |\rho_n|^q \Big) \dx \dt \\
            & \lesssim T \frac{\| \rho_0 \|_{L^q}^q}{k^q}.
        \end{split}
    \end{equation*}
    On the other hand, on the set where both $|\rho_m|$ and |$\rho_n|$ are smaller than $k$, we know that these functions are equal to, respectively, $\eta_k(\rho_m)$ and $\eta_k(\rho_n)$. We deduce that
    \begin{equation*}
        {\rm meas} \big\{ |\rho_n - \rho_m| \geq \epsilon \big\} \lesssim {\rm meas} \big\{ |\eta_k(\rho_n) - \eta_k(\rho_m)| \geq \epsilon \big\} + T \frac{\| \rho_0 \|_{L^q}^q}{k^q}.
    \end{equation*}
    Now, we had deduced from \eqref{eq:EtaStrongConvergence} that, for any given $k$, the sequence $\big( \eta_k(\rho_n) \big)_n$ converges almost everywhere (to a limit $g_k$). Proposition \ref{p:ConvergenceMeasureAndAE} then shows that the sequence converges in measure, and in particular it must fulfill the Cauchy criterion of Proposition \ref{p:MeasureCauchy}, so that the measure in the inequality immediately above converges to zero as $m > n \geq N \rightarrow + \infty$. By taking the limit superior, we infer that
    \begin{equation*}
        \limss_{m>n\geq N \rightarrow + \infty} {\rm meas} \big\{ |\rho_n - \rho_m| \geq \epsilon \big\} \lesssim T \frac{\| \rho_0 \|_{L^q}^q}{k^q}.
    \end{equation*}
    Since this upper bound can be made as small as desired by taking $k$ as large as necessary, we see that the limit superior must be zero, and so the sequence $(\rho_n)$ must converge in measure. Proposition \ref{p:ConvergenceMeasureAndAE} then asserts that the convergence is also true almost everywhere on $[0, T] \times \T^d$ up to taking an extraction. By taking $T$ as having integral values $T = M \rightarrow + \infty$, we see by means of diagonal extraction that the convergence can in fact be assumed hold almost everywhere on $\R_+ \times \T^d$. Let $f$ be the limit of this convergence:
    \begin{equation*}
        \rho_n \tend f \qquad \text{a.e. on } \R_+ \times \T^d.
    \end{equation*}
    It is a consequence of Proposition \ref{p:MazurConsequence} that in fact $f = \rho$, so that the $\rho_n$ converge to $\rho$ almost everywhere on $\R_+ \times \T^d$.

    \medskip

    Finally, let us show that the convergence of $(\rho_n)$ takes place in the norm topology of $L^r(L^q)$. According to Proposition \ref{p:NormConvergence}, it is enough to show convergence of the norms $\| \rho_n \|_{L^r_T(L^q)}$ for every $T > 0$, as the space $L^r_T(L^q)$ is uniformly convex (since $1 < r < + \infty$). On the one hand, we have, since the flows of the smooth $u_n$ (which are trigonometric polynomials) preserve the Lebesgue norms
    \begin{equation}\label{eq:NormConvergence}
        \| \rho_n(t) \|_{L^q} = \| \rho_n(0) \|_{L^q} = \| S_n \rho_0 \|_{L^q} \tend_{n \rightarrow + \infty} \| \rho_0 \|_{L^q}.
    \end{equation}
    If we show that the preservation of the Lebesgue norms transfers to the limit $\| \rho(t) \|_{L^q} = \| \rho_0 \|_{L^q}$, then the preceding equation is enough to apply Proposition \ref{p:NormConvergence}. Instead of working directly with the density $\rho$, we do this by means of the almost everywhere convergence $\rho_n \tend \rho$ that we have shown just above, and apply it to the ``renormalized'' equation \eqref{eq:RenormalizedApproximate}. Consider a function $\eta_k$ as in \eqref{eq:admissible-functions} and note that, as $\eta_k(\rho_n)$ is a solution of the initial value problem \eqref{eq:RenormalizedApproximate} with $\eta = \eta_k$, we must have
    \begin{equation*}
        \big\| \eta_k (\rho_n(t)) \big\|_{L^q} = \| \eta_k(S_n \rho_0) \|_{L^q} \qquad \text{for a.e. } t \in \R_+.
    \end{equation*}
    However, the convergence $S_n \rho_0 \tend \rho_0$ in $L^q$ and the almost everywhere convergence of the $\rho_n$ implies that, by dominated convergence, and up to taking an extraction,
    \begin{equation*}
        \| \eta_k (S_n \rho_0) \|_{L^q} \tend \big\| \eta_k(\rho_0) \big\|_{L^q} \qquad \text{as } n \rightarrow + \infty
    \end{equation*}
    and
    \begin{equation*}
        \big\| \eta_k(\rho_n(t)) \big\|_{L^q} \tend_{n \rightarrow + \infty} \big\| \eta_k(\rho(t)) \big\|_{L^q} \qquad \text{for a.e. } t \in \R_+.
    \end{equation*}
    Uniqueness of the pointwise limit then shows that
    \begin{equation*}
        \big\| \eta_k(\rho(t)) \big\|_{L^q} = \big\| \eta_k(\rho_0) \big\|_{L^q} \qquad \text{for a.e. } t \in \R_+.
    \end{equation*}
    Lastly, by our choice of $\eta_k$, we note that $|\eta_k(r)| \leq |r|$, and $\eta_k(r) \rightarrow r$ pointwise as $k \rightarrow + \infty$, so that dominated convergence yields
    \begin{equation*}
        \big\| \eta_k(\rho) \big\|_{L^r_T(L^q)} \tend \| \rho \|_{L^r(L^q)} \qquad \text{as } k \rightarrow + \infty
    \end{equation*}
    and
    \begin{equation*}
        \big\| \eta_k(\rho_0) \big\|_{L^q} \tend \| \rho_0 \|_{L^q} \qquad \text{as } k \rightarrow + \infty,
    \end{equation*}
    so that we do indeed have preservation of the Lebesgue norms $\| \rho(t) \|_{L^q} = \| \rho_0 \|_{L^q}$ and \eqref{eq:NormConvergence} allows us to deduce convergence of the norms $\| \rho_n \|_{L^r_T(L^q)} \tend \| \rho \|_{L^r_T(L^q)}$. As explained above, this is enough to invoke Proposition \ref{p:NormConvergence} and thereby end the proof.
\end{proof}

    \begin{rmk}
        We have also shown in the proof that the $L^q$ norms of the solution are preserved $\| \rho(t) \|_{L^q} = \| \rho_0 \|_{L^q}$ for almost all times $t \geq 0$. By arguing exactly in the same way, we can show that if the initial datum also an element of another Lebesgue space, $\rho_0 \in L^r$ for some $r \in [1, + \infty]$, then the same holds for the $L^r$ norm: $\| \rho(t) \|_{L^r} = \| \rho_0 \|_{L^r}$ for almost all times $t \geq 0$.
    \end{rmk}

\section{End of the Proof of Theorem \ref{t:WeakSolutions}}

In this paragraph, we combine all the elements above and show the existence of a weak solution of the non-Newtonian Stokes-Transport problem. Proposition \ref{p:ApproximateSolutions} provides the existence of a family $(\rho_n, u_n, v_n)$ of functions which solve the approximate problem \eqref{eq:ApproximateSystem}, and for which the \textsl{a priori} estimates also hold, see \eqref{eq:UniformBoundsDensity} and \eqref{eq:UniformBoundsVelocity}. In addition, Proposition \ref{p:StrongDensityConvergence} shows that the densities strongly converge (up to an extraction)
\begin{equation}\label{eq:ProofStrongConvergence}
    \rho_n \tend \rho \qquad \text{in } L^r_{\rm loc}(L^q),
\end{equation}
where $\rho$ is a solution of transport equation \eqref{eq:TransportLimit} with initial datum $\rho_0$. Concerning the velocities, we have the weak convergences
\begin{equation*}
    \begin{array}{c}
        v_n \wtend^* v \\
        u_n \wtend^* u
    \end{array}
    \qquad \text{in } L^\infty(W^{1, \beta}).
\end{equation*}
All that remains to show is then that $u = v$ and that $v$ is, at almost every time $t \in \R_+$, a weak solution of the Stokes equation, namely
\begin{equation*}
    \begin{cases}
        - \D \big( \nu(\rho) |Dv|^{p-2} Dv \big) + \nabla \pi = \rho g \\
        \D(v) = 0.
    \end{cases}
\end{equation*}

\medskip

Let us rephrase the problem: if $\Psi : L^q \tend W^{1, \beta}$ is the inverse map introduced in Proposition \ref{p:steady-state-uniqueness}. We know that $v_n(t) = \Psi \big(\rho_n(t) \big)$ for almost every time $t \in \R_+$ and we have to show that $u = v =\Psi\big(\rho(t)\big)$. This will be done by resorting to the continuity property of the map $\Psi$ from Proposition \ref{p:continuity-psi}. Consider a function $\phi \in L^1(L^q)$. Then,
\begin{equation*}
    \begin{split}
        \big\langle v_n, \phi \big\rangle_{L^1(L^{q'}) \times L^\infty(L^q)} & = \int_0^{+ \infty} \int \Psi(\rho_n) \phi \dx \dt \\
        & = \int_0^{+\infty} \langle \Psi(\rho_n), \phi \rangle_{L^{q'} \times L^q} \dt.
    \end{split}
\end{equation*}
However, Proposition \ref{p:continuity-psi} shows that $\Psi$ is continuous with respect to the $L^q \tend W^{1, \beta}_w \subset L^{q'}_w$ topology. In addition, \eqref{eq:ProofStrongConvergence} insures that strong convergence
\begin{equation*}
    \rho_n(t) \tend \rho(t) \qquad \text{in } L^q
\end{equation*}
occurs for almost every time $t \in \R_+$, hence convergence for the brackets
\begin{equation*}
    \big\langle \Psi\big( \rho_n(t) \big) , \phi(t) \big\rangle_{L^{q'} \times L^q} \tend \big\langle \Psi\big( \rho(t) \big) , \phi(t) \big\rangle_{L^{q'} \times L^q}
\end{equation*}
also occurs at almost every time $t \in \R_+$. By dominated convergence, we deduce convergence in the whole space-time bracket, and hence $\Psi(\rho_n) \longrightarrow \Psi(\rho)$. In other words, $v$ is indeed a solution of the Stokes problem for almost every time.

\medskip

The last thing left is to check that $u = v$. Recall that we have $u_n = S_n v_n$, so that the Fourier transforms $\what{u_n}$ and $\what{v_n}$ must coincide on any ball $B(0, R)$, provided that $n$ is taken large enough $n \geq C \log(R)$. Consequently, the limit Fourier transforms $\what{u}$ and $\what{v}$ must also agree on every ball, and so $u = v$. We have finished proving Theorem \ref{t:WeakSolutions}.

\section{Appendix: Different Tools from PDEs and Analysis}

In this appendix, we briefly recall the results derived from DiPerna-Lions theory for transport equations, and from Littlewood-Paley theory and paradifferential calculus in a second step. We concentrate here on the results and definitions used in the present work and refer the interested reader to the references mentioned for further details and developments.

\subsection{Transport Equations}

In this section, we recall some results concerning transport equations which will be used in the article. We are concerned with the linear transport equation 
\begin{equation}\label{eq:Transport}
    \begin{cases}
        \partial_t f + \D (fu) = 0\\
        f(0) = f_0
    \end{cases}
\end{equation}
where $u$ is a divergence-free vector field $\D(u) = 0$, and will be assumed to have regularity $u \in L^1_T(W^{1, \beta})$ with respect to space, for some $\beta \geq 1$. The first result we mention is a well-posedness theorem for \eqref{eq:Transport}.

\begin{thm}[See Proposition II.1 and Theorem II.2 in \cite{DpL}]\label{t:TransportWP}
    Let $q \geq \beta'$ and consider an initial datum $f_0 \in L^q$. Then there exists a unique weak solution $f \in L^\infty(L^q)$ of the transport equation.
\end{thm}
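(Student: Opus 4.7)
The plan is to prove existence by a standard regularization-compactness argument and uniqueness through the DiPerna-Lions renormalization technique.

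For existence, I would regularize both the velocity field and the initial datum via a mollification sequence $(\psi_\veps)$, setting $u^\veps := \psi_\veps \ast u$ (which remains divergence-free) and $f_0^\veps := \psi_\veps \ast f_0$. The classical method of characteristics produces a global smooth solution $f^\veps$ of the regularized transport equation, and since the flow of $u^\veps$ preserves the Lebesgue measure, the norms $\|f^\veps(t)\|_{L^q}$ are conserved in time and uniformly bounded by $\|f_0\|_{L^q}$. A weak-$(*)$ extraction then yields a limit $f \in L^\infty(L^q)$, and passing to the limit in the weak formulation is routine: the assumption $q \geq \beta'$ ensures $\frac{1}{q} + \frac{1}{\beta} \leq 1$, so that by H\"older's inequality the strong convergence $u^\veps \to u$ in $L^1_T(L^\beta)$ combined with weak-$(*)$ convergence of $f^\veps$ suffices to handle the product $f^\veps u^\veps$ in the sense of distributions.

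For uniqueness, linearity of the equation reduces the problem to showing that any weak solution $f \in L^\infty(L^q)$ with $f_0 = 0$ must vanish identically. Setting $f_\veps := \psi_\veps \ast f$ and convolving the transport equation with $\psi_\veps$ yields
\begin{equation*}
\partial_t f_\veps + u \cdot \nabla f_\veps = r_\veps, \qquad r_\veps := \big[\, u \cdot \nabla\, ,\, \psi_\veps \ast \, \big] f,
\end{equation*}
where the bracket denotes the usual commutator. Since $f_\veps$ is now smooth in space, I may multiply by $\eta'(f_\veps)$ for a suitable $\eta \in C^1(\R)$ with $\eta'$ bounded and $\eta(0) = 0$; using $\mathrm{div}(u) = 0$, this yields the renormalized identity
\begin{equation*}
\partial_t \eta(f_\veps) + \mathrm{div}\big(\eta(f_\veps) u\big) = \eta'(f_\veps) r_\veps,
\end{equation*}
which upon integration over $\T^d$ reduces to $\frac{\rm d}{\dt} \int \eta(f_\veps) \dx = \int \eta'(f_\veps) r_\veps \dx$.

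The crucial input is then the DiPerna-Lions commutator estimate, already invoked earlier in the paper as Lemma \ref{l:Commutator}: under the assumptions $u \in L^1_T(W^{1,\beta})$ and $f \in L^\infty(L^q)$ with $\frac{1}{q} + \frac{1}{\beta} \leq 1$, one has $r_\veps \to 0$ strongly in $L^1_T(L^1)$. Combined with boundedness of $\eta'(f_\veps)$, this lets me pass to the limit $\veps \to 0^+$, yielding $\int \eta(f(t)) \dx = \int \eta(f_0) \dx = 0$ for almost every $t \geq 0$. Choosing $\eta$ nonnegative with $\eta^{-1}(\{0\}) = \{0\}$ and $\eta(r) \lesssim |r|$ to keep the integrals finite (for instance $\eta(r) = r^2/(1+|r|)$), this forces $f(t) \equiv 0$ almost everywhere, completing the uniqueness proof. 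The main technical obstacle is precisely this commutator estimate, whose derivation rests on careful harmonic-analytic arguments on the structure of $r_\veps$; however, since it is standard DiPerna-Lions material already made available earlier in the paper, it may be invoked here directly without further work.
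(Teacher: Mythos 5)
Your proposal is correct and follows exactly the route the paper itself indicates: Theorem \ref{t:TransportWP} is quoted from DiPerna--Lions, and the paper's only commentary on its proof is precisely the regularization procedure and the commutator Lemma \ref{l:Commutator} that you use, with the condition $q \geq \beta'$ playing the same role (making the product $fu$ and the commutator estimate available via H\"older). Your reconstruction of the existence part (characteristics for the mollified field, conservation of $L^q$ norms, weak-$(*)$ extraction) and of the uniqueness part (renormalization with a bounded $C^1$ function $\eta$ vanishing only at the origin) is the standard argument of \cite{DpL} and is sound.
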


The proof of this Theorem relies on a regularization procedure: let $(\psi_\epsilon)_{\epsilon > 0}$ be a mollification sequence on $\T^d$. Then, by taking the convolution of the transport equation, we obtain the system
\begin{equation*}
    \partial_t f_\epsilon + \D (f_\epsilon u) = \big[ u \cdot \nabla, \psi_\epsilon * \big] f,
\end{equation*}
where $f_\epsilon = \psi_\epsilon * f$ and the commutator is to be understood in the weak sense: we have 
\begin{equation*}\big[ u \cdot \nabla, \psi_\epsilon * \big] f = \D(f_\epsilon u) - \psi_\epsilon * \D(fu).
\end{equation*}
The following classical lemma shows that the commutator in fact converges to zero as the mollification parameter does.

\begin{lemma}[Lemma II.1 in \cite{DpL}]\label{l:Commutator}
    Assume that $u \in L^1(W^{1, \beta})$ and fix a function $g \in L^\infty(L^q)$ for some $q \geq \beta'$We then have
    \begin{equation*}
        \big[ u \cdot \nabla, \psi_\epsilon * \big] g \tend 0 \qquad \text{in } L^1(L^\alpha),
    \end{equation*}
    where $\alpha$ is given by $\frac{1}{\alpha} = \frac{1}{\beta} + \frac{1}{q}$.
\end{lemma}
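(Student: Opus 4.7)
The plan is to derive an explicit integral representation of the commutator $C_\epsilon[u,g] := [u \cdot \nabla, \psi_\epsilon *] g$, establish a uniform pointwise-in-time $L^\alpha$ bound, verify convergence to zero at almost every time, and conclude by the Lebesgue dominated convergence theorem in time. First I would write $C_\epsilon[u,g] = \D(g_\epsilon u) - \psi_\epsilon * \D(gu)$ with $g_\epsilon = \psi_\epsilon * g$, use the divergence-free condition $\D(u) = 0$ to recast $\D(g_\epsilon u) = u \cdot \nabla g_\epsilon$, integrate by parts against the mollifier in the $y$-variable, and change variables $z = (x-y)/\epsilon$ to obtain
\begin{equation*}
    C_\epsilon[u,g](t,x) = \int \nabla \psi(z) \cdot \frac{u(t,x) - u(t, x - \epsilon z)}{\epsilon} \, g(t, x - \epsilon z) \, {\rm d} z.
\end{equation*}
The weak-sense derivation demands attention since $g$ has no a priori regularity, so I would justify the integration by parts first for smooth $g$ and extend by density in $L^q$.

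Next I would establish a uniform-in-$\epsilon$ bound. By Minkowski's inequality in $z$ and H\"older's inequality with $\frac{1}{\alpha} = \frac{1}{\beta} + \frac{1}{q}$, combined with the finite-difference estimate $\big\| (u(t,\cdot) - u(t, \cdot - \epsilon z))/\epsilon \big\|_{L^\beta} \leq |z| \, \|\nabla u(t)\|_{L^\beta}$ (which follows from the fundamental theorem of calculus and Jensen's inequality), translation invariance of the $L^q$ norm, and the compactness of $\Supp \psi$, I would obtain
\begin{equation*}
    \| C_\epsilon[u,g](t) \|_{L^\alpha} \lesssim \|\nabla u(t)\|_{L^\beta} \, \|g(t)\|_{L^q},
\end{equation*}
which is integrable in $t$ by the hypotheses on $u$ and $g$. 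Then, to obtain pointwise-in-time convergence, I would fix $t$ at which $u(t) \in W^{1,\beta}$, set $f = u(t)$, $h = g(t)$, and in the smooth case argue via Taylor expansion and dominated convergence in the compactly supported variable $z$ to get
\begin{equation*}
    C_\epsilon[f,h] \; \longrightarrow \; \sum_{i,j} \left( \int \partial_i \psi(z) \, z_j \, {\rm d} z \right) \partial_j f_i \cdot h \; = \; - \D(f) \, h \; = \; 0
\end{equation*}
in $L^\alpha$, since $\int z_j \partial_i \psi = -\delta_{ij}$ and $\D(f) = 0$. For general $f \in W^{1,\beta}$ and $h \in L^q$, bilinearity of $C_\epsilon$ and the uniform bound above allow a standard three-term approximation $C_\epsilon[f,h] = C_\epsilon[f - f_n, h] + C_\epsilon[f_n, h - h_n] + C_\epsilon[f_n, h_n]$ to transfer convergence via density of smooth functions.

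Finally, the pointwise-in-time convergence combined with the $L^1_t$ majorant from the preceding step yields, via Lebesgue's dominated convergence theorem, the desired limit $[u \cdot \nabla, \psi_\epsilon *] g \to 0$ in $L^1(L^\alpha)$. The main obstacle will be the first step: the weak-sense integration by parts when $g$ is only $L^q$ requires a density argument on top of the explicit change of variables, and care must be taken because the product $gu$ is only in $L^\alpha$ and the mollifier acts in both the commuted operators. Once the explicit integral formula is secured, though, the rest reduces to H\"older's inequality together with two applications of dominated convergence.
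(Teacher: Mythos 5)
Your argument is the classical DiPerna--Lions proof of their Lemma~II.1, which is exactly what the paper relies on (the lemma is cited there without proof): the difference-quotient representation of the commutator, the uniform $L^\alpha$ bound via Minkowski and H\"older together with $\|\tau_{-\epsilon z}u - u\|_{L^\beta} \leq \epsilon |z| \|\nabla u\|_{L^\beta}$, the identification of the pointwise limit with $-\D(f)\,h = 0$ through the moment identity $\int z_j \partial_i \psi = -\delta_{ij}$, a density transfer, and dominated convergence in time. All of these steps are correct, and the structure (uniform majorant in $L^1_t$ plus a.e.-in-time convergence) is the right way to pass from the fixed-time statement to $L^1(L^\alpha)$.

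One caveat on the density step. Your three-term decomposition approximates $h$ in $L^q$, which requires $q < +\infty$; but the hypothesis allows $q = +\infty$ (so $\alpha = \beta$), and this endpoint is precisely the one invoked later in the paper, where the lemma is applied to $g \in L^\infty(L^\infty)$ with conclusion in $L^1_T(L^\beta)$. In that case smooth functions are not dense in $L^\infty$, so you should instead approximate only $u$: the term $C_\epsilon[f - f_n, h]$ is controlled uniformly in $\epsilon$ by $\|\nabla(f - f_n)\|_{L^\beta} \|h\|_{L^\infty}$, while for smooth $f_n$ one splits the difference quotient into $z \cdot \nabla f_n(x)$ plus a uniformly small remainder and uses continuity of translations in $L^\beta$ (valid since $L^\infty(\T^d) \subset L^\beta(\T^d)$) to get $\int \partial_i\psi(z)\, z_j\, h(x - \epsilon z)\,{\rm d}z \to -\delta_{ij} h(x)$ in $L^\beta$. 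With that adjustment the proof covers the full range $q \in [\beta', +\infty]$; as written it is complete only for finite $q$.
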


Finally, we cite one last result, which has to do with propagation of regularity: the solution $f$ can be as regular as the velocity field and the initial datum allows.

\begin{thm}[See for example Theorem 3.19 in \cite{BCD}]\label{t:TransportRegularity}
    Consider $s > 1 + d/2$ and assume that $u \in L^1(H^s)$ and $f_0 \in H^s$. Then the unique solution $f$ of the transport equation \eqref{eq:Transport} given by theorem \ref{t:TransportWP} has regularity $f \in L^\infty_{\rm loc} (H^s)$ and we have, for every $T > 0$,
    \begin{equation*}
        \| f \|_{L^\infty_T} \leq \| f_0 \|_{H^s} \exp \left( C \int_0^T \| \nabla u \|_{H^{s-1}} \dt \right).
    \end{equation*}
\end{thm}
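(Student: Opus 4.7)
The plan is to prove this by a standard Littlewood-Paley energy argument for the transport equation combined with a commutator estimate. First, I would apply the dyadic block $\Delta_j$ to the equation \eqref{eq:Transport}, written in the advective form $\partial_t f + u \cdot \nabla f = 0$ (which is licit since $\D(u) = 0$), to obtain
\begin{equation*}
    \partial_t \Delta_j f + u \cdot \nabla \Delta_j f = [u \cdot \nabla, \Delta_j] f =: R_j.
\end{equation*}
Then, taking the $L^2$ scalar product with $\Delta_j f$ and using the divergence-free condition to cancel the transport term, I arrive at
\begin{equation*}
    \frac{1}{2} \frac{\rm d}{\dt} \| \Delta_j f \|_{L^2}^2 = \int R_j \, \Delta_j f \dx \leq \| R_j \|_{L^2} \| \Delta_j f \|_{L^2}.
\end{equation*}
After simplification and integration in time, this yields a pointwise-in-$j$ inequality of the form $\| \Delta_j f(t) \|_{L^2} \leq \| \Delta_j f_0 \|_{L^2} + \int_0^t \| R_j \|_{L^2} \ds$.

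The heart of the argument is then the commutator estimate. Using Bony's paraproduct decomposition on $u \cdot \nabla f$ and well-known bounds (see e.g. Lemma 2.100 in \cite{BCD} or similar), one establishes
\begin{equation*}
    2^{js} \| [u \cdot \nabla, \Delta_j] f \|_{L^2} \leq c_j(t) \, \| \nabla u \|_{H^{s-1}} \| f \|_{H^s},
\end{equation*}
where $(c_j(t))_{j \geq -1} \in \ell^2$ with $\|(c_j(t))\|_{\ell^2} \leq C$. This is the step where the assumption $s > 1 + d/2$ plays its decisive role, as it is the regularity threshold that makes the paraproduct $\mathcal{T}_{\nabla f}(u)$ (the term where derivatives fall on the rough factor) controllable using the Sobolev embedding $H^{s-1} \hookrightarrow L^\infty$.

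Multiplying the previous energy estimate by $2^{js}$ and taking the $\ell^2$ norm in $j$, I obtain
\begin{equation*}
    \| f(t) \|_{H^s} \leq \| f_0 \|_{H^s} + C \int_0^t \| \nabla u(\tau) \|_{H^{s-1}} \| f(\tau) \|_{H^s} \, \mathrm{d}\tau,
\end{equation*}
and Grönwall's lemma delivers the announced inequality. The main obstacle will be the rigorous justification of the commutator estimate in $L^2$ for functions that are only a priori in $L^\infty_T(L^q)$ from Theorem \ref{t:TransportWP}; this is resolved by first working with a smooth mollified approximation $(f_\epsilon)$ and a mollified velocity, establishing the a priori bound uniformly, and then passing to the limit using the uniqueness statement of Theorem \ref{t:TransportWP}. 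Since this is a classical result of Beirão da Veiga / Vishik / Danchin type and is stated in the reference, I would ultimately just invoke Theorem 3.19 of \cite{BCD}, but the above is the strategy behind it.
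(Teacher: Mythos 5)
The paper offers no proof of this statement: it is quoted directly from the literature (Theorem 3.19 in \cite{BCD}), which is exactly what you propose to do in the end. Your sketch --- dyadic localization of the advective form, cancellation of the transport term via $\D(u)=0$, the commutator bound $2^{js}\,\big\|[u\cdot\nabla,\Delta_j]f\big\|_{L^2}\leq c_j\,\|\nabla u\|_{H^{s-1}}\|f\|_{H^s}$ made legitimate by the embedding $H^{s-1}\hookrightarrow L^\infty$ for $s>1+d/2$, Gr\"onwall, and a mollification/uniqueness step to justify the estimate for rough solutions --- is precisely the standard argument behind that reference, so it is correct and there is nothing in the paper to compare it against.
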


\subsection{Besov Spaces}

In this subsection, we recall some basics concerning non-homogeneous Besov spaces on $\T^d$ and some of their properties that are useful to us in this article. First, we recall that the Littlewood-Paley decomposition is based on a dyadic partition of the unity in frequency space, \textsl{i.e.} we can find a radially symmetric function with compact support $\chi \in \mc D \big( ]-\pi, \pi[^d \big)$ such that the mapping $r \mapsto \chi(re)$ is decreasing for all $e \in \R^d$, and
\begin{equation*}
\chi(x) = 1 \text{ for } |x| \leq 1 \qquad \text{and} \qquad \chi(x) = 0 \text{ for } |x| \geq 2,
\end{equation*}
then, writing $\varphi(\xi) = \chi(\xi) - \chi(2 \xi)$ and $\varphi_j(\xi) = \varphi(2^{-j}\xi)$, we obtain for every $\xi \in \Z^d$ (see \cite[Proposition 2.10.]{BCD})
\begin{equation*}
 1 = \chi(\xi) + \sum_{j \geq 0} \varphi_j(\xi).
\end{equation*}
which is a partition of unity in frequency. This leads to the definition of Littlewood-Paley blocks given by

\begin{equation}\label{eq:littlewood-paley-blocks}
\begin{array}{ll}
\Delta_j = 0& \text{ if } j \leq -2,\\
\Delta_{-1} = \chi(D),&\\
\Delta_j = \varphi_j(D) & \text{ for } j \geq 0.
\end{array}
\end{equation}

Next, we can define a low-frequency truncation operator, given by $S_j = \chi(2^{j-1} D)$. We point out that the operators $\Delta_j$ and $S_j$ are respectively scaled versions of $\varphi(D)$ and $\Delta_{-1}$, and thus for all $q \in [1, + \infty]$ these operators are uniformly bounded in the $L^q \tend L^q$ topology. We then can write for instance
\begin{equation*}
{\rm Id} = \sum_{j \geq -1} \Delta_j;
\end{equation*}
this equality being known as the non homogeneous Littlewood-Paley decomposition, and holds over $\mathcal{D}'(\T^d)$ (see \cite[Proposition 2.12., Proposition 2.13.]{BCD}).

\medskip

One of the main properties linked with Littlewood-Paley decomposition is given by the fact that it is possible to estimate the derivatives of a distribution in terms of its frequencies:  given a distribution $u$, the Fourier transform of $\Delta_ju$ has its support included in an annulus of size more or less $2^j$, thus the derivative of $u$ will act as a multiplication by $2^j$. This is the object of the following result.

\begin{prop}[Bernstein inequalities, {\cite[Lemma 2.1.]{BCD}}]
Let  $0<r<R$. There exists a constant $C > 0$ such that for any nonnegative integer $k$, any couple $(p,q)$ in $[1,+\infty]^2$, with  $p\leq q$,  and any function $u\in L^p$,  we  have, for all $\lambda>0$,
$$
\displaylines{
{\rm supp}\, ( \widehat u ) \subset   B(0,\lambda R)\quad
\Longrightarrow\quad
\|\nabla^k u\|_{L^q}\, \leq\,
 C^{k+1}\,\lambda^{k+d\left(\frac{1}{p}-\frac{1}{q}\right)}\,\|u\|_{L^p}\;;\cr
{\rm supp}\, ( \widehat u ) \subset \{\xi\in\Z^d\,|\, r\lambda\leq|\xi|\leq R\lambda\}
\quad\Longrightarrow\quad C^{-k-1}\,\lambda^k\|u\|_{L^p}\,
\leq\,
\|\nabla^k u\|_{L^p}\,
\leq\,
C^{k+1} \, \lambda^k\|u\|_{L^p}\,.
}$$
\end{prop}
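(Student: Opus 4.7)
The plan is to reduce both inequalities to convolution estimates: because the Fourier support of $u$ is confined to an explicit set, $u$ admits a representation as a convolution with a rescaled Schwartz function that acts as the identity on that set, after which Young's inequality and a change of variables produce the correct powers of $\lambda$.

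For the first inequality I would choose $\phi \in \mc D(\R^d)$ whose Fourier transform $\hat \phi$ is identically one on $B(0, R)$, and introduce its rescaling $\phi_\lambda(x) := \lambda^d \phi(\lambda x)$, so that $\hat{\phi_\lambda}(\xi) = \hat\phi(\xi/\lambda)$ is identically one on $B(0, \lambda R)$. If $\mathrm{supp}(\hat u) \subset B(0, \lambda R)$ this yields the exact identity $u = \phi_\lambda * u$, hence $\nabla^k u = (\nabla^k \phi_\lambda) * u$. Applying Young's convolution inequality with the exponent $s \in [1, +\infty]$ defined by $1 + \tfrac{1}{q} = \tfrac{1}{s} + \tfrac{1}{p}$ (which is legitimate precisely because $p \leq q$) gives
$$
\|\nabla^k u\|_{L^q} \leq \|\nabla^k \phi_\lambda\|_{L^s} \, \|u\|_{L^p}.
$$
A direct change of variables evaluates $\|\nabla^k \phi_\lambda\|_{L^s} = \lambda^{k + d(1 - 1/s)} \|\nabla^k \phi\|_{L^s}$, and the algebraic identity $1 - \tfrac{1}{s} = \tfrac{1}{p} - \tfrac{1}{q}$ converts this into exactly the claimed factor $\lambda^{k + d(1/p - 1/q)}$, with $C$ controlled by quantities depending only on $\phi$ (hence only on $R$).

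For the second inequality the upper bound is the first one applied with $p = q$. The lower bound is the more delicate point and requires a cutoff supported away from the origin. I would select $\tilde\phi \in \mc D(\R^d \setminus \{0\})$ which equals one on the annulus $\{r \leq |\xi| \leq R\}$, so that $\hat u(\xi) = \tilde\phi(\xi/\lambda) \hat u(\xi)$. Because $\xi \neq 0$ on $\Supp \tilde\phi$, the functions $\Psi_j(\xi) := - i \xi_j \tilde\phi(\xi)/|\xi|^2$ belong to $\mc S(\R^d)$ and satisfy the algebraic decomposition $\tilde\phi(\xi) = \sum_{j=1}^d (i\xi_j) \Psi_j(\xi)$. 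Reading this identity at frequency scale $\lambda$ yields the representation
$$
u \; = \; \frac{1}{\lambda} \sum_{j=1}^d \Psi_j(D/\lambda) \, \partial_j u,
$$
in which each operator $\Psi_j(D/\lambda)$ is convolution with the kernel $\lambda^d (\mc F^{-1} \Psi_j)(\lambda \, \cdot)$ whose $L^1$ norm is $\lambda$-independent by dilation invariance. Young's inequality therefore yields $\|u\|_{L^p} \leq C \lambda^{-1} \|\nabla u\|_{L^p}$; iterating the same argument $k$ times, which is legitimate because each $\partial^\alpha u$ retains the same spectral support as $u$, produces $\|u\|_{L^p} \leq C^k \lambda^{-k} \|\nabla^k u\|_{L^p}$, which is the claimed lower bound.

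The only technical point that really needs attention is to verify that all constants are genuinely uniform in $\lambda$: for the first inequality this is the scaling computation of $\|\nabla^k \phi_\lambda\|_{L^s}$, and for the second it is the dilation invariance of the $L^1$ norm of the kernel $\lambda^d (\mc F^{-1} \Psi_j)(\lambda \, \cdot)$. Once these two elementary observations are in hand, the entire proof is a one-line application of Young's inequality in each case.
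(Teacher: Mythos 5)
Your proof is correct and is essentially the standard argument from \cite{BCD} (Lemma 2.1) that the paper quotes without proof: represent $u$ as a convolution with a rescaled cutoff whose Fourier transform equals one on the spectral support, apply Young's inequality with the exponent $s$ determined by $1+\tfrac1q=\tfrac1s+\tfrac1p$, and for the reverse inequality divide by $|\xi|^2$ on the annulus to write the identity as a divergence-form operator applied to $\nabla u$, then iterate. The only points left implicit are the transfer from $\R^d$ to the torus $\T^d$ on which the statement is posed (periodize the kernel; this does not increase its $L^1$ norm, so Young's inequality applies verbatim) and the fact that the constant grows only geometrically in $k$, i.e.\ $\|\nabla^k\phi\|_{L^s}\leq C^{k+1}$, which is what the stated form $C^{k+1}$ requires and which follows from the Fourier-side bound $\widehat{\nabla^k\phi}(\xi)=(i\xi)^{\otimes k}\hat\phi(\xi)$ with $|\xi|\leq 2R$ on the support; both are routine.
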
   

We are now able to define the wished non homogeneous Besov spaces, which are Banach spaces.

\begin{defi}[Non-homogeneous Besov space]
Let $s\in\R$ and $1\leq p,r\leq+\infty$. The non-homogeneous Besov\index{Space!Besov} space $B^{s}_{p,r}\,=\,B^s_{p,r}(\T^d)$ is defined as the set of tempered distributions $f \in \mc S'$ for which
$$
\|f\|_{B^{s}_{p,r}}\,:=\,
\left\|\left(2^{js}\,\|\Delta_j f \|_{L^p}\right)_{j \geq -1}\right\|_{\ell^r}\,<\,+\infty\,.
$$
\end{defi}

Similar to the case of Sobolev spaces $W^{s,p}$, the parameter $s \in \R$ acts as a regularity index and the parameter $p$ as an integrability exponent. As a fact, Bernstein inequalities directly leads to the embeddings
\begin{equation}\label{eq:crudeEmbed}
B^k_{p, 1} \subset W^{k, p} \subset B^k_{p, \infty},
\end{equation}
these embbeddings holding for all $k \in \N$ and for all $p \in [1, + \infty]$. Furthermore, we emphasize that another consequence of Bernstein inequalities is that it also leads to some embeddings between Besov spaces. More precisely, we have:

\begin{prop}[See Proposition 2.71 in \cite{BCD}]\label{p:BesovEmbeddingScaling}
    Consider $s_1 \in \R$ and $q_1, q_2, r_1, r_2 \in [1, + \infty]$ such that $q_1 \leq q_2$ and $r_1 \leq r_2$, then the inclusion $B^s_{q_1, r_1} \subset B^{s_2}_{q_2, r_2}$ holds with
    \begin{equation*}
        s_2 := s_1 - d \left( \frac{1}{q_1} - \frac{1}{q_2} \right).
    \end{equation*}
\end{prop}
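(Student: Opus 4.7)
The plan is to deduce this embedding as a direct consequence of the Bernstein inequalities recalled just above, combined with the elementary monotonicity of $\ell^r$ spaces on $\N$. The key observation is that the Littlewood-Paley blocks $\Delta_j f$ have frequency support concentrated at scale $2^j$ (a ball for $j=-1$, an annulus for $j \geq 0$), so the Lebesgue exponent can be cheaply upgraded at the price of a multiplicative factor $2^{jd(1/q_1 - 1/q_2)}$, which precisely accounts for the loss of $d(1/q_1 - 1/q_2)$ derivatives in the regularity index.

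More precisely, first I would fix $f \in B^{s_1}_{q_1, r_1}$ and estimate each block $\Delta_j f$ separately. For $j \geq 0$, since $\widehat{\Delta_j f}$ is supported in an annulus of size $\sim 2^j$, the Bernstein inequality with $p = q_1$ and $q = q_2$ (valid since $q_1 \leq q_2$) yields
\begin{equation*}
\| \Delta_j f \|_{L^{q_2}} \lesssim 2^{jd\left( \frac{1}{q_1} - \frac{1}{q_2} \right)} \| \Delta_j f \|_{L^{q_1}}.
\end{equation*}
For $j=-1$, the support of $\widehat{\Delta_{-1} f}$ is compact, so the same estimate still holds (with an absolute constant). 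Multiplying both sides by $2^{js_2}$ and using the definition of $s_2$, I obtain
\begin{equation*}
2^{js_2} \| \Delta_j f \|_{L^{q_2}} \lesssim 2^{js_1} \| \Delta_j f \|_{L^{q_1}}
\end{equation*}
uniformly in $j \geq -1$.

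Finally, I would take the $\ell^{r_2}$ norm of both sides over $j \geq -1$. Since $r_1 \leq r_2$, the trivial embedding $\ell^{r_1}(\N) \subset \ell^{r_2}(\N)$ provides
\begin{equation*}
\| f \|_{B^{s_2}_{q_2, r_2}} = \big\| 2^{js_2} \| \Delta_j f \|_{L^{q_2}} \big\|_{\ell^{r_2}} \lesssim \big\| 2^{js_1} \| \Delta_j f \|_{L^{q_1}} \big\|_{\ell^{r_2}} \lesssim \big\| 2^{js_1} \| \Delta_j f \|_{L^{q_1}} \big\|_{\ell^{r_1}} = \| f \|_{B^{s_1}_{q_1, r_1}},
\end{equation*}
which is the desired embedding. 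There is no real obstacle in this argument: the entire content is encoded in the scaling of the Bernstein inequalities, and the only thing to watch out for is the separate (but easier) treatment of the low-frequency block $\Delta_{-1}$, which, unlike the blocks for $j \geq 0$, has frequency support in a ball rather than an annulus.
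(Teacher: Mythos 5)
Your proof is correct and follows exactly the route the paper indicates: the paper does not prove this proposition itself (it cites Proposition~2.71 of \cite{BCD}) but explicitly remarks that the embedding is a consequence of the Bernstein inequalities, which is precisely your block-by-block argument combined with the monotonicity $\ell^{r_1}\subset\ell^{r_2}$. Your separate treatment of the low-frequency block $\Delta_{-1}$ (ball-supported, handled by the first Bernstein inequality) is the right and standard way to close the argument.
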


There are embeddings that are finer than those presented in \eqref{eq:crudeEmbed}. More precisely, it is a matter of devoting a special attention to the role of the index $r$ of the space $B^s_{p, r}$.

\begin{prop}[Theorems 2.40 and 2.41, pp. 79--82, in \cite{BCD}]\label{p:BesovFineEmbeddings}
    Consider $q \in [1, 2]$ and $r \in [2, + \infty[$. Then we have the following continuous embeddings:
    \begin{equation*}
        L^q \subset B^0_{q, 2} \qquad \text{and} \qquad B^0_{r, 2} \subset L^r.
    \end{equation*}
    and also
    \begin{equation*}
        B^0_{q, q} \subset L^q \qquad \text{and} \qquad L^r \subset B^0_{r, r}.
    \end{equation*}
\end{prop}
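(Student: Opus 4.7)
The plan is to derive all four inclusions from two classical ingredients. First, I would invoke the Littlewood--Paley square-function characterization of Lebesgue spaces: for $1 < p < \infty$,
\begin{equation*}
\|f\|_{L^p} \approx \left\|\left(\sum_{j \geq -1} |\Delta_j f|^2\right)^{1/2}\right\|_{L^p}.
\end{equation*}
Second, I would use the elementary pointwise inclusion $\ell^a \hookrightarrow \ell^b$ for $a \leq b$, which gives $\left(\sum_j |a_j|^2\right)^{1/2} \leq \left(\sum_j |a_j|^q\right)^{1/q}$ whenever $q \leq 2$, and the reverse inequality for $r \geq 2$.

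The two ``diagonal'' embeddings $B^0_{q,q} \subset L^q$ and $L^r \subset B^0_{r,r}$ will follow immediately. Applying the pointwise bound $\ell^q \hookrightarrow \ell^2$ with $a_j = |\Delta_j f(x)|$ and then integrating in $L^q(dx)$ yields
\begin{equation*}
\|f\|_{L^q} \approx \left\|\left(\sum_j |\Delta_j f|^2\right)^{1/2}\right\|_{L^q} \leq \left\|\left(\sum_j |\Delta_j f|^q\right)^{1/q}\right\|_{L^q} = \|f\|_{B^0_{q,q}},
\end{equation*}
and the symmetric argument with $\ell^2 \hookrightarrow \ell^r$ inside an $L^r$ norm delivers $L^r \subset B^0_{r,r}$.

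For the two ``off-diagonal'' embeddings $L^q \subset B^0_{q,2}$ and $B^0_{r,2} \subset L^r$, the idea is to exchange the order of the $L^p$ and $\ell^2$ norms via Minkowski's inequality. For $r \geq 2$, the exponent $r/2 \geq 1$, so the direct Minkowski inequality in $L^{r/2}$ gives
\begin{equation*}
\left\|\left(\sum_j |\Delta_j f|^2\right)^{1/2}\right\|_{L^r}^2 = \left\|\sum_j |\Delta_j f|^2\right\|_{L^{r/2}} \leq \sum_j \|\Delta_j f\|_{L^r}^2 = \|f\|_{B^0_{r,2}}^2,
\end{equation*}
whence $B^0_{r,2} \subset L^r$ after applying the square-function characterization. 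For $q \leq 2$, I would use the reverse Minkowski inequality $\|\cdot\|_{\ell^2(L^q)} \leq \|\cdot\|_{L^q(\ell^2)}$ (which is valid precisely because the outer exponent is smaller than the inner one) to obtain $\|f\|_{B^0_{q,2}} \lesssim \|f\|_{L^q}$.

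The main obstacle will be the endpoint $q = 1$, where the Littlewood--Paley characterization fails: it holds only on the real Hardy space $H^1 \subsetneq L^1$. For the diagonal inclusion $B^0_{1,1} \subset L^1$ the triangle inequality alone suffices, but for the off-diagonal case $L^1 \subset B^0_{1,2}$ I would argue separately, combining the uniform $L^1$ boundedness of the convolution kernels of $\Delta_j$ with a real-interpolation argument between the endpoint $q = 2$ (where $L^2 = B^0_{2,2}$ by Parseval) and values of $q$ strictly greater than $1$. This endpoint analysis is essentially the content of the interpolation machinery developed in \cite[Chapter~2]{BCD}, from which the remaining cases fall out cleanly via the Minkowski exchange above.
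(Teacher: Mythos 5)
The paper offers no proof of this proposition --- it is quoted from \cite{BCD} --- so the only question is whether your reconstruction is sound. For $q\in\,]1,2]$ and $r\in[2,+\infty[$ it is, and it is the standard argument (the same one used in the cited source): the Littlewood--Paley square-function equivalence combined with the pointwise inclusions $\ell^q\hookrightarrow\ell^2\hookrightarrow\ell^r$ gives the two diagonal embeddings, and the generalized Minkowski inequality (put the smaller exponent outermost to get the larger mixed norm) gives the two off-diagonal ones. All four of those computations are correct as written.

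The genuine gap is your treatment of the endpoint $q=1$ for the inclusion $L^q\subset B^0_{q,2}$. First, the proposed mechanism cannot work: real interpolation between the exponent $2$ and exponents $q>1$ only produces statements for $q$ strictly inside that range, never at $q=1$; and the uniform $L^1\to L^1$ boundedness of the kernels of $\Delta_j$ only yields the $\ell^\infty$ statement $L^1\subset B^0_{1,\infty}$, not the $\ell^2$ one. Second, and more fundamentally, the inclusion $L^1\subset B^0_{1,2}$ is \emph{false}, so no argument can close this gap: taking $f_N$ to be a Fej\'er-type approximate identity on $\T^d$ with $\|f_N\|_{L^1}=1$, one has $\|\Delta_j f_N\|_{L^1}\geq c>0$ for all $-1\leq j\leq \log_2 N-C$, whence $\|f_N\|_{B^0_{1,2}}\gtrsim\sqrt{\log N}\to+\infty$. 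The correct endpoint substitutes are $L^1\subset B^0_{1,\infty}$ and $\mathcal H^1\subset B^0_{1,2}$ (where the square-function characterization does hold), so that particular inclusion in the statement should be read with $q\in\,]1,2]$. This does not propagate into the paper: the proposition is only ever invoked with $q=\beta\in\,]1,2]$ and $r=q'\in[2,+\infty[$ (in the proof of Proposition \ref{p:APriori}), where your main argument is complete. The diagonal endpoint $B^0_{1,1}\subset L^1$ via the triangle inequality is of course fine.
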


The following result allows us to establish some continuity properties on Fourier multipliers dealing with dyadic blocks.

\begin{prop}[See Lemma 2.2 in \cite{BCD}]\label{p:FourierMultiplier}
    Consider a Fourier multiplier $m(\xi)$ whose symbol is a smooth function away from the origin $m \in C^\infty (\R^d \setminus \{ 0 \})$ such that there is an order $M \in \R$ with the following property: for all $\alpha \in \N^d$ with $|\alpha| \leq d+2$, there exists a constant $C_\alpha > 0$ with
    \begin{equation*}
        \forall \xi \neq 0, \qquad |\partial^\alpha m (\xi)| \leq C_\alpha |\xi|^{M - |\alpha|}.
    \end{equation*}
    Then, for any Lebesgue exponent $q \in [1, + \infty]$ and any $f \in \mc D'(\T^d)$, we have
    \begin{equation*}
        \forall j \geq -1, \qquad \| m(D) \Delta_j f \|_{L^q} \leq C(d) 2^{Mj} \| \Delta_j f \|_{L^q}.
    \end{equation*}
\end{prop}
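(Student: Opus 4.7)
The plan is to prove the bound by reducing to a scaled Fourier multiplier estimate and controlling the $L^1$ norm of the associated convolution kernel.

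First, I will handle the typical dyadic blocks $j \geq 0$. Since $\Delta_j f$ has frequency support contained in an annulus of radius $\sim 2^j$, I will introduce an auxiliary cutoff $\tilde{\varphi} \in C^\infty_c(\R^d \setminus \{0\})$, supported in a slightly larger annulus and equal to $1$ on the support of $\varphi$, and set $\tilde{\varphi}_j(\xi) := \tilde{\varphi}(2^{-j}\xi)$. Then $m(D) \Delta_j f = (m \cdot \tilde{\varphi}_j)(D) \Delta_j f$, and since $\tilde{\varphi}_j$ vanishes near $\xi = 0$, the symbol $m \cdot \tilde{\varphi}_j$ is smooth everywhere and the singularity of $m$ at the origin plays no role.

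Next, I will show the kernel $K_j := \mathcal{F}^{-1}(m \cdot \tilde{\varphi}_j)$ satisfies the bound $\|K_j\|_{L^1(\R^d)} \lesssim 2^{Mj}$ uniformly in $j$. To do this, I will rescale: set $h_j(\eta) := 2^{-Mj} m(2^j \eta) \tilde{\varphi}(\eta)$. The hypothesis $|\partial^\alpha m(\xi)| \leq C_\alpha |\xi|^{M-|\alpha|}$ together with the Leibniz formula shows that, on the fixed annular support of $\tilde{\varphi}$ (where $|\eta| \sim 1$, so $|2^j \eta| \sim 2^j$), the derivatives $\partial^\alpha h_j$ for $|\alpha| \leq d+2$ are bounded uniformly in $j$. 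Consequently, integrating by parts shows $|\mathcal{F}^{-1} h_j(y)| \lesssim (1+|y|)^{-d-2}$ uniformly in $j$, hence $\mathcal{F}^{-1} h_j$ is uniformly bounded in $L^1(\R^d)$. A change of variable $\xi = 2^j \eta$ shows that $K_j(x) = 2^{j(d+M)} (\mathcal{F}^{-1} h_j)(2^j x)$, so that $\|K_j\|_{L^1(\R^d)} = 2^{Mj} \|\mathcal{F}^{-1} h_j\|_{L^1(\R^d)} \lesssim 2^{Mj}$.

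To finish, I will transfer from $\R^d$ to $\T^d$ by Poisson summation: the torus convolution kernel is the periodization of $K_j$, and its $L^1(\T^d)$ norm is at most $\|K_j\|_{L^1(\R^d)} \lesssim 2^{Mj}$. Young's inequality on the torus then yields $\|m(D) \Delta_j f\|_{L^q(\T^d)} \leq \|K_j\|_{L^1(\T^d)} \|\Delta_j f\|_{L^q(\T^d)} \lesssim 2^{Mj} \|\Delta_j f\|_{L^q(\T^d)}$ for every $q \in [1, +\infty]$. The case $j = -1$ is treated by the same method after replacing $\tilde{\varphi}_j$ by a cutoff equal to $1$ near the finite set of nonzero frequencies in the support of $\chi$ (restricted to $\Z^d$); the estimate is trivial as we are dealing with a finite-dimensional operator.

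The main obstacle is really bookkeeping: verifying that the uniform $C^{d+2}$ bound on the rescaled symbol $h_j$ indeed survives the rescaling and that the singularity of $m$ at the origin is harmless because $\tilde{\varphi}$ has support bounded away from $0$. Once these points are settled, the standard kernel decay plus Young's inequality close the argument.
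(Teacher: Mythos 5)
Your argument is correct and complete. Note that the paper itself offers no proof of this proposition --- it is quoted directly from Lemma~2.2 of \cite{BCD} --- and what you have written is precisely the standard argument behind that lemma: localize the symbol with an auxiliary annular cutoff so the singularity at the origin is irrelevant, rescale so that the hypothesis $|\partial^\alpha m(\xi)| \leq C_\alpha |\xi|^{M-|\alpha|}$ yields $C^{d+2}$ bounds on the rescaled symbol that are uniform in $j$, deduce $(1+|y|)^{-d-2}$ decay of the kernel and hence an $L^1$ bound scaling like $2^{Mj}$, and conclude by periodization and Young's inequality on $\T^d$. The one point worth flagging concerns $j=-1$: the support of $\chi$ restricted to $\Z^d$ contains the frequency $k=0$, where $m$ need not be defined (e.g.\ $m(\xi)=|\xi|^{M}$ with $M<0$), so $m(D)\Delta_{-1}$ only makes sense under the usual convention that the zero mode is discarded or that the argument has zero mean --- which is how the proposition is used in this paper, since $u$ is taken with $\fint u = 0$. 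With that convention, your finite-dimensional treatment of the block $j=-1$ closes the argument.
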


By dealing only with dyadic blocks, one cannot completely understand low-frequency behavior. Thus, other tools are required, such as the use of Bernstein inequalities for example. Another possibility is to consider the Calder\'{o}n-Zygmund theory, whose essential result is presented thereafter.

\begin{thm}[Calder\'on-Zygmund, see { \cite[Theorem 4.2.2.]{Grafakos}}]\label{t:CZ}
    Consider a Fourier multiplication operator $m(D)$ whose symbol is a homogeneous function of degree zero $m \in C^\infty(\R^d \setminus \{ 0 \})$. Then for any $1 < q < + \infty$, the operator $m(D) : \mc D(\T^d) \tend \mc D'(\T^d)$ has a unique bounded extension
    \begin{equation*}
        m(D) : L^q \tend L^q
    \end{equation*}
\end{thm}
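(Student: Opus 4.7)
The plan is to prove this via the classical Calder\'on--Zygmund machinery. First, observe that since $m$ is homogeneous of degree zero and smooth on $\R^d \setminus \{0\}$, it is bounded on $\R^d$, so Plancherel's identity on $\T^d$ gives immediately the $L^2$ bound
\begin{equation*}
    \| m(D) f \|_{L^2(\T^d)} \leq \| m \|_{L^\infty(\R^d)} \| f \|_{L^2(\T^d)}.
\end{equation*}
This supplies the anchor endpoint for the interpolation to come.

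Next, I would identify $m(D)$ with a convolution operator via a Calder\'on--Zygmund kernel. On $\R^d$, a classical computation (see e.g. Grafakos) shows that the inverse Fourier transform of a degree-zero homogeneous smooth symbol is of the form $c\, \delta_0 + \mathrm{p.v.}\, \Omega(x/|x|)/|x|^d$, where $\Omega \in C^\infty(S^{d-1})$ has zero mean on the sphere, and its derivatives satisfy the pointwise bounds $|K(x)| \lesssim |x|^{-d}$ and $|\nabla K(x)| \lesssim |x|^{-d-1}$, in particular the H\"ormander condition. On the torus, $m(D)$ acts via the periodization of $K$; the cleanest way to pass from $\R^d$ to $\T^d$ is to invoke de Leeuw's transference principle, which reduces the $L^q(\T^d)$ bound for $m(D)$ to the corresponding $L^q(\R^d)$ bound for the same multiplier.

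Granted the kernel estimates, the weak $(1,1)$ inequality follows from the Calder\'on--Zygmund decomposition: given $f \in L^1$ and $\lambda > 0$, stopping-time selection produces $f = g + \sum_j b_j$ with $\| g \|_{L^\infty} \lesssim \lambda$, $\| g \|_{L^1} \lesssim \| f \|_{L^1}$, and each $b_j$ supported in a cube $Q_j$ with vanishing integral and the $Q_j$ pairwise disjoint of controlled total measure. The $L^2$ bound applied to $g$ handles the good part via Tchebychev, while the H\"ormander condition on $K$, combined with the vanishing mean of each $b_j$, controls $m(D) b$ outside $\bigcup_j 2 Q_j$. This yields the desired weak-type estimate $\| m(D) f \|_{L^{1, \infty}} \lesssim \| f \|_{L^1}$.

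Once weak $(1,1)$ and strong $(2,2)$ are in hand, Marcinkiewicz interpolation delivers $L^q \tend L^q$ boundedness for all $1 < q \leq 2$, and the range $2 \leq q < + \infty$ follows by duality, since the formal adjoint $m(D)^* = \overline{m}(- \, \cdot \,)(D)$ is again a Fourier multiplier of the same class. The main obstacle in the argument is really conceptual rather than technical: namely, the careful passage from $\R^d$ to $\T^d$, where one must either directly check that the periodized kernel retains the H\"ormander condition on a fundamental domain (which involves estimating the error between $K$ and its periodization near the origin), or invoke transference. Everything else consists of applying standard Calder\'on--Zygmund theory to a symbol whose behavior has already been captured at the $L^2$ level.
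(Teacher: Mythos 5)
This theorem is stated in the paper as a quoted classical result, with a citation to Grafakos; the paper gives no proof of it, so there is no in-paper argument to compare against. Your sketch is the standard Calder\'on--Zygmund route and is essentially correct: $L^2$ boundedness from Plancherel (the symbol, being smooth and homogeneous of degree zero, is bounded), identification of the kernel as $c\,\delta_0 + \mathrm{p.v.}\,\Omega(x/|x|)/|x|^d$ with $\Omega$ smooth and mean-zero on the sphere, the H\"ormander condition, weak $(1,1)$ via the Calder\'on--Zygmund decomposition, Marcinkiewicz interpolation, and duality. This is precisely the content of the cited reference. Two minor points deserve explicit mention if you were to write this out in full. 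First, on $\T^d$ the symbol is evaluated only at $\xi \in \Z^d$ and is undefined at $\xi = 0$; one must fix a convention there (e.g.\ $m(0)=0$), which is harmless in this paper since $m(D)$ is only ever applied to mean-zero fields such as $u$. Second, you correctly identify the passage from $\R^d$ to $\T^d$ as the one nontrivial step; either the transference argument (de Leeuw, using continuity of $m$ at every nonzero lattice point) or the direct verification that the periodized kernel satisfies the H\"ormander condition on a fundamental domain works, and you should commit to one of them rather than leaving the choice open.
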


We now introduce the paraproduct operator. Roughly speaking, this involves showing that the product of two elements $u$ and $v$ of $\mc S'$ can be decomposed into Cauchy series, \textsl{i.e.} we have

\begin{equation}\label{eq:bony}
u\,v\;=\; \mc T_u(v)\,+\, \mc T_v(u)\,+\, \mc R(u,v)\,,
\end{equation}

where we set
\begin{equation*}
\mc T_u(v)\,:=\,\sum_jS_{j-1}u\Delta_j v,\qquad\mbox{ and }\qquad
\mc R(u,v)\,:=\,\sum_j\sum_{|j'-j|\leq1}\Delta_j u\,\Delta_{j'}v\,.
\end{equation*}

The term $\mc R$ is called the remainder operator, and $\mc T$ is called the paraproduct, while the equation \eqref{eq:bony} is referred as the Bony decomposition. We then have the following result.

\begin{prop}[See {\cite[Theorem 2.85.]{BCD}}]\label{p:op}
For any $(s,p,r)\in\R\times[1,+\infty]^2$ and $t>0$, the paraproduct operator 
$\mc T$ maps continuously $L^\infty\times B^s_{p,r}$ in $B^s_{p,r}$ and  $B^{-t}_{\infty,\infty}\times B^s_{p,r}$ in $B^{s-t}_{p,r}$.
Moreover, the following estimates hold:
$$
\| \mc T_u(v)\|_{B^s_{p,r}}\,\lesssim \,\|u\|_{L^\infty}\,\|\nabla v\|_{B^{s-1}_{p,r}}
$$
as well as
\begin{equation*}
\| \mc T_u(v)\|_{B^{s-t}_{p,r}}\,\lesssim \frac{1}{t} \|u\|_{B^{-t}_{\infty,\infty}}\,\|\nabla v\|_{B^{s-1}_{p,r}} \qquad \text{and} \qquad \| \mc T_u(v)\|_{B^{s-t}_{p,r}}\,\lesssim \frac{1}{t} \|u\|_{B^{-t}_{p,\infty}}\,\|\nabla v\|_{B^{s-1}_{\infty,r}}.
\end{equation*}

For any $(s_1,p_1,r_1)$ and $(s_2,p_2,r_2)$ in $\R\times[1,+\infty]^2$ such that 
$s_1+s_2>0$, $\frac{1}{p}:= \frac{1}{p_1}+ \frac{1}{p_2}\leq 1$ and~$\frac{1}{r}:= \frac{1}{r_1} + \frac{1}{r_2} \leq 1$, the remainder operator $\mc R$ maps continuously~$B^{s_1}_{p_1,r_1}\times B^{s_2}_{p_2,r_2}$ into~$B^{s_1+s_2}_{p,r}$. Also, we have the estimates over the remainder:

\begin{equation*}
\lVert  \mc R(u,v) \rVert_{B^{s_1 + s_2}_{p,r}} \lesssim \lVert u \rVert_{B^{s_1}_{p_1,r_1}}\lVert v \rVert_{B^{s_2}_{p_2,r_2}}
\end{equation*}

In the case $s_1+s_2=0$, provided $r=1$, operator $\mathcal{R}$ is continuous from $B^{s_1}_{p_1,r_1}\times B^{s_2}_{p_2,r_2}$ with values
in $B^{0}_{p,\infty}$.
\end{prop}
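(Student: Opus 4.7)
The strategy is the standard one of Littlewood--Paley analysis: we localize each piece of the Bony decomposition in frequency, apply Bernstein's and H\"older's inequalities block-by-block, and then reassemble by taking $\ell^r$-norms. The whole proof hinges on a single spectral observation for each operator.

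\textbf{Paraproduct.} For each $j$, the product $S_{j-1}u\,\Delta_j v$ has Fourier support in an annulus $\{|\xi|\sim 2^j\}$, because $S_{j-1}u$ is supported in a small ball near the origin while $\Delta_j v$ has a frequency annulus of size $2^j$. Consequently, there exists a fixed integer $N_0$ such that $\Delta_{j'}(S_{k-1}u\,\Delta_k v)=0$ whenever $|k-j'|>N_0$. H\"older's inequality then gives
\[
\|\Delta_{j'}\mc T_u(v)\|_{L^p}\lesssim \sum_{|k-j'|\leq N_0}\|S_{k-1}u\|_{L^\infty}\,\|\Delta_k v\|_{L^p}.
\]
For the first bound, I would use $\|S_{k-1}u\|_{L^\infty}\leq \|u\|_{L^\infty}$, multiply by $2^{j's}$ (comparable to $2^{ks}$), and take $\ell^r$ in $j'$. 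For the second type of bound, I would instead estimate $\|S_{k-1}u\|_{L^\infty}\leq \sum_{m\leq k-2}\|\Delta_m u\|_{L^\infty}\leq \|u\|_{B^{-t}_{\infty,\infty}}\sum_{m\leq k-2}2^{mt}$, and the geometric series produces the factor $2^{kt}/(2^t-1)\lesssim 2^{kt}/t$ responsible for the $1/t$ in the estimate. Absorbing the $2^{kt}$ into the regularity shift $s\mapsto s-t$ then gives the claim; the $B^{-t}_{p,\infty}$ variant is obtained identically with the roles of $p$ and $\infty$ permuted in H\"older's inequality. The fact that $\|\Delta_k v\|_{L^p}\sim 2^{-k}\|\Delta_k \nabla v\|_{L^p}$ (Bernstein) lets me freely transfer the norm from $v$ to $\nabla v$ at the cost of a shift by one in the regularity index.

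\textbf{Remainder.} The key spectral fact is now different: each product $\Delta_j u\,\Delta_{j'}v$ (with $|j-j'|\leq 1$) has Fourier support only in a \emph{ball} of radius $\sim 2^j$, not in an annulus. Therefore $\Delta_k\mc R(u,v)=0$ is no longer automatic for large $j$, but the condition $\Delta_k(\Delta_j u\,\Delta_{j'}v)\neq 0$ forces $j\geq k-N_0$ for some absolute $N_0$. H\"older's inequality with $\tfrac{1}{p}=\tfrac{1}{p_1}+\tfrac{1}{p_2}$ yields
\[
2^{k(s_1+s_2)}\|\Delta_k\mc R(u,v)\|_{L^p}\lesssim\sum_{j\geq k-N_0}2^{(k-j)(s_1+s_2)}\bigl(2^{js_1}\|\Delta_j u\|_{L^{p_1}}\bigr)\bigl(2^{js_2}\|\Delta_{j'} v\|_{L^{p_2}}\bigr).
\]
Since $s_1+s_2>0$, the sequence $j\mapsto 2^{-j(s_1+s_2)}\mathds 1_{j\geq -N_0}$ lies in $\ell^1$, and a convolution form of Young's inequality in the $j$-variable (applied with exponents $r_1,r_2$ satisfying $\tfrac{1}{r}=\tfrac{1}{r_1}+\tfrac{1}{r_2}\leq 1$) then produces the $\ell^r$-bound defining $\|\mc R(u,v)\|_{B^{s_1+s_2}_{p,r}}$. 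In the borderline case $s_1+s_2=0$ the coefficient sequence is only bounded, so Young's inequality only allows us to land in $\ell^\infty$, which is why one must impose $r=1$ in the assumption and accept $B^0_{p,\infty}$ as the target.

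\textbf{Main obstacle.} The only subtle point is the careful bookkeeping of the $1/t$ factor in the paraproduct estimate when $u\in B^{-t}_{\infty,\infty}$: one must check that the geometric sum $\sum_{m\leq k-2}2^{mt}$ truly behaves as $2^{kt}/t$ uniformly down to $t\to 0^+$, which requires the elementary inequality $2^t-1\geq t\log 2$. Everything else is a matter of locating the Fourier support of each block and applying H\"older and Young inequalities.
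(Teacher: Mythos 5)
This proposition is quoted in the paper's appendix directly from the cited reference (BCD, Theorems 2.82 and 2.85) without any proof, and your argument is precisely the standard proof given there: spectral localization of each Bony block, H\"older and Bernstein inequalities block-by-block, summation of the geometric series $\sum_{m\le k-2}2^{mt}\lesssim 2^{kt}/t$ producing the $1/t$ factor, and the dyadic convolution estimate for the remainder, including the $\ell^\infty$ endpoint that explains the target $B^0_{p,\infty}$ when $s_1+s_2=0$ and $r=1$. The reasoning is correct; the only cosmetic refinements would be to split the last remainder step explicitly into H\"older in the dyadic index (combining the $\ell^{r_1}$ and $\ell^{r_2}$ norms into an $\ell^r$ sequence) followed by Young's inequality $\ell^1\ast\ell^r\subset\ell^r$, and to note that the reverse Bernstein transfer from $v$ to $\nabla v$ is legitimate because only blocks $\Delta_j v$ with $j\ge 1$ occur in the paraproduct.
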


\subsection{Convergence Lemmas}

In this section, we have gathered a number of eclectic results from functional analysis and measure theory which we will use throughout the article. We start by presenting a notion that is neighbor to almost everywhere convergence: convergence in measure.

\begin{defi}[Convergence in measure]\label{d:ConvergenceMeasure}
    Consider $T > 0$ and a family of measurable functions $f, f_n : [0, T] \times \T^d \tend \R$, for $n \geq 1$. We say that $(f_n)$ converges in measure to $f$ if and only if for all $\epsilon > 0$,
    \begin{equation*}
        {\rm meas} \big\{ |f_n - f| \geq \epsilon \big\} \tend 0 \qquad \text{as } n \rightarrow + \infty.
    \end{equation*}
    Remark that convergence in measure is associated to a metric $d$, defined by
    \begin{equation*}
        d(f,g) := \sum_{n = 1} \frac{1}{2^n} \min \Big( 1, {\rm meas} \big\{ |f-g| \geq 2^{-n} \big\} \Big).
    \end{equation*}
\end{defi}

The next proposition states that the topology of the convergence in measure is complete.

\begin{prop}[Cauchy criterion, see {\cite[Theorem 2.30.]{Folland}}]\label{p:MeasureCauchy}
    Consider $T > 0$ and a family of measurable functions $f_n : [0, T] \times \T^d \tend \R$. Then the sequence $(f_n)$ converges in measure to a measurable function $f$ if and only if the following Cauchy criterion is satisfied: for all $\epsilon, \epsilon' > 0$, there exists a rank $N \geq 1$ such that, for all $m > n \geq N$ we have
    \begin{equation*} 
        {\rm meas} \big\{ |f_n - f_m| \geq \epsilon \big\} \leq \epsilon'.
    \end{equation*}
    In other words, the sequence $(f_n)$ converges in measure if and only if, for all $\epsilon > 0$, we have
    \begin{equation*}
        {\rm meas} \big\{ |f_n - f_m| \geq \epsilon \big\} \tend 0 \qquad \text{as } m > n \geq N \rightarrow + \infty.
    \end{equation*}
\end{prop}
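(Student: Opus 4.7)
My plan is to prove the Cauchy criterion by establishing the two implications separately, with the nontrivial direction relying on the standard trick of extracting a rapidly Cauchy subsequence and invoking Borel-Cantelli to produce an almost everywhere limit.

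The necessity direction (``only if'') is a straightforward consequence of the triangle inequality for the convergence in measure. Assuming $f_n \to f$ in measure, I would observe that for any $\epsilon > 0$,
\begin{equation*}
    \big\{ |f_n - f_m| \geq \epsilon \big\} \subset \big\{ |f_n - f| \geq \epsilon/2 \big\} \cup \big\{ |f_m - f| \geq \epsilon/2 \big\},
\end{equation*}
so that by sub-additivity, $\mathrm{meas}\{|f_n - f_m| \geq \epsilon\} \leq \mathrm{meas}\{|f_n - f| \geq \epsilon/2\} + \mathrm{meas}\{|f_m - f| \geq \epsilon/2\}$, which tends to zero as $m > n \to +\infty$ by the definition of convergence in measure.

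For the sufficiency direction, which is the main substance of the proposition, I would proceed in three steps. First, using the Cauchy hypothesis, extract a subsequence $(f_{n_k})_{k \geq 1}$ (by a standard diagonal choice of indices) such that
\begin{equation*}
    \mathrm{meas} \big\{ |f_{n_{k+1}} - f_{n_k}| \geq 2^{-k} \big\} \leq 2^{-k}
\end{equation*}
for every $k \geq 1$. Second, I would apply the Borel-Cantelli lemma to the sets $E_k := \{|f_{n_{k+1}} - f_{n_k}| \geq 2^{-k}\}$: since $\sum_k \mathrm{meas}(E_k) < + \infty$, the set $E := \limsup_k E_k$ has measure zero. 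Outside of $E$, the telescoping series $\sum_k (f_{n_{k+1}}(x) - f_{n_k}(x))$ is eventually dominated by $\sum_k 2^{-k}$, hence converges absolutely, so $(f_{n_k}(x))$ is a Cauchy sequence in $\R$ for almost every $x \in [0,T] \times \T^d$. Denoting by $f(x)$ the pointwise limit (arbitrarily extended to zero on $E$) yields a measurable function.

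The final step is to check that the whole sequence $(f_n)$ -- and not merely the subsequence $(f_{n_k})$ -- converges in measure to this $f$. For this, I would split
\begin{equation*}
    \big\{ |f_n - f| \geq \epsilon \big\} \subset \big\{ |f_n - f_{n_k}| \geq \epsilon/2 \big\} \cup \big\{ |f_{n_k} - f| \geq \epsilon/2 \big\},
\end{equation*}
choosing $k = k(n)$ large so that the first term is small by the Cauchy hypothesis, and noting that the second term tends to zero as $k \to +\infty$ because almost everywhere convergence implies convergence in measure on a space of finite measure (an application of Egorov's theorem, or equivalently of dominated convergence applied to $\mathds{1}_{|f_{n_k} - f| \geq \epsilon/2}$). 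The main obstacle is the extraction/Borel-Cantelli step, which is where measurability of the limit $f$ is actually produced; once that is in hand, the remaining bookkeeping is routine.
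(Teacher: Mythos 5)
Your proof is correct and is essentially the standard argument from the reference the paper cites (Folland, Theorem 2.30); the paper itself gives no proof of this proposition, simply quoting it as a known tool. All three steps — the triangle-inequality inclusion for necessity, the rapidly Cauchy subsequence with Borel--Cantelli to produce the a.e.\ limit, and the final splitting (which legitimately uses that $[0,T]\times\T^d$ has finite measure so that a.e.\ convergence implies convergence in measure) — are sound.
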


Finally, we gather a few results concerning the links existing between convergence in measure and convergence almost everywhere.

\begin{prop}[See {\cite[Chapter V.13.]{Doob}}]\label{p:ConvergenceMeasureAndAE}
    Consider $T > 0$ and a family of measurable functions $f_n : [0, T] \times \T^d \tend \R$, for $n \geq 1$. Then the following statements hold:
    \begin{enumerate}[(i)]
        \item If the sequence $(f_n)$ converges almost everywhere on $[0, T] \times \T^d$, then the sequence converges in measure.
        
        \item Conversely, if the sequence $(f_n)$ converges in measure, then there is an extracted sequence which converges almost everywhere on $[0, T] \times \T^d$.
    \end{enumerate}
\end{prop}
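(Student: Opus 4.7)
The plan is to prove the two directions separately, both relying crucially on the finiteness of the underlying measure space $[0, T] \times \T^d$.

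For part (i), I would use continuity of the measure from above. Fix $\epsilon > 0$ and introduce the tail sets $A_N := \bigcup_{n \geq N} \{ |f_n - f| \geq \epsilon \}$, which form a decreasing sequence of measurable sets. A point $x$ lies in $\bigcap_N A_N$ exactly when $|f_n(x) - f(x)| \geq \epsilon$ for infinitely many $n$, which is excluded for almost every $x$ by the hypothesis of almost-everywhere convergence. Since the total measure of the ambient space is finite, continuity from above yields ${\rm meas}(A_N) \to 0$, and the inclusion $\{|f_N - f| \geq \epsilon\} \subset A_N$ closes this direction.

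For part (ii), the plan is to proceed via a Borel--Cantelli extraction. Applying the definition of convergence in measure iteratively, I would extract an increasing sequence of indices $(n_k)$ such that ${\rm meas}\{ |f_{n_k} - f| \geq 2^{-k} \} \leq 2^{-k}$. Setting $B_k := \{|f_{n_k} - f| \geq 2^{-k}\}$, the summability $\sum_k {\rm meas}(B_k) < + \infty$ together with the first Borel--Cantelli lemma implies that $\limsup_k B_k$ is a null set. For every point $x$ outside this null set, one has $|f_{n_k}(x) - f(x)| < 2^{-k}$ eventually, and therefore $f_{n_k}(x) \to f(x)$.

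Neither direction presents real difficulty, as both are classical arguments; the sole substantive ingredient is the finite measure of the ambient domain, which cannot be avoided (cf.~the counterexample $f_n = \mathbf{1}_{[n, n+1]}$ on $\R$ for direction (i)) but is automatic in our compact setting. The only point that warrants any care is ensuring, in (ii), that the subsequence converges to the \emph{same} function $f$ appearing in the hypothesis, which is built into the above Borel--Cantelli argument by design.
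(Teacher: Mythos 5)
Your proof is correct and complete: both the continuity-from-above argument for (i) (with the finiteness of the measure of $[0,T]\times\T^d$ correctly identified as the essential hypothesis) and the Borel--Cantelli extraction for (ii) are the standard arguments. The paper itself offers no proof of this proposition --- it is simply cited from Doob --- and your argument is precisely the classical one that the citation points to, so there is nothing to compare beyond noting that your write-up would serve as a self-contained substitute for the reference.
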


A second part of this subsection is concerned with weak convergence, and its relation to almost everywhere or strong convergence. 

\begin{prop}\label{p:MazurConsequence}
    Consider $1 < q, r < + \infty$, and a sequence of functions $f_n : \R_+ \times \T^d \tend \R$ such that there is convergence
    \begin{equation*}
        \begin{split}
            &f_n \wtend_{n \rightarrow + \infty} f \qquad \text{in } L^r(L^q) \\
            &f_n \tend_{n \rightarrow + \infty} g \qquad \text{a.e.}
        \end{split}
    \end{equation*}
    The both limits must be equal $f = g$.
\end{prop}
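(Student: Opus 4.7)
The plan is to combine Mazur's lemma with a simple argument about convex combinations of pointwise convergent sequences. Since the exponents satisfy $1 < q, r < +\infty$, the space $L^r(L^q) = L^r(\R_+ ; L^q(\T^d))$ is reflexive, so Mazur's lemma applies: weak convergence yields convex combinations that converge in the norm topology.

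More precisely, I would first invoke Mazur's lemma to obtain convex combinations
\begin{equation*}
    h_n \;=\; \sum_{k=n}^{N_n} \lambda_k^{(n)} f_k, \qquad \lambda_k^{(n)} \geq 0, \quad \sum_{k=n}^{N_n} \lambda_k^{(n)} = 1,
\end{equation*}
such that $h_n \to f$ strongly in $L^r(L^q)$. Next, a classical elementary observation shows that convex combinations inherit pointwise convergence: since $f_k(t,x) \to g(t,x)$ for almost every $(t,x) \in \R_+ \times \T^d$, for any such point and any $\epsilon > 0$ there exists $N \geq 1$ with $|f_k(t,x) - g(t,x)| < \epsilon$ for all $k \geq N$, and for $n \geq N$ one then has
\begin{equation*}
    |h_n(t,x) - g(t,x)| \;\leq\; \sum_{k=n}^{N_n} \lambda_k^{(n)} |f_k(t,x) - g(t,x)| \;<\; \epsilon.
\end{equation*}
Hence $h_n \to g$ almost everywhere on $\R_+ \times \T^d$.

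Finally, since strong $L^r(L^q)$ convergence implies convergence almost everywhere along a subsequence, one can extract $(h_{n_j})$ with $h_{n_j} \to f$ a.e. Combined with the a.e. convergence $h_n \to g$ established above, uniqueness of pointwise limits forces $f = g$ almost everywhere, which is the desired conclusion. There is no real obstacle in the argument; the only point requiring minor attention is to verify that Mazur's lemma is applicable in the iterated Lebesgue space $L^r(L^q)$, which holds because this space is a reflexive Banach space under the stated range of exponents.
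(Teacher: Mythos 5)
Your proof is correct and follows essentially the same route as the paper: Mazur's lemma in the reflexive space $L^r(L^q)$, the observation that convex combinations of the tail inherit the almost everywhere convergence to $g$, and extraction of an a.e.\ convergent subsequence of the convex combinations to identify $f=g$. No gaps.
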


\begin{proof}
    The proof is an application of Mazur's lemma (Corollary 3.8 in \cite{Brezis}). The weak convergence (in the reflexive space $L^r(L^q)$) provides the existence of a sequence of convex combinations of the $f_n$ which converge strongly in that space. More precisely, there exists a set of coefficients $\lambda_n(k) > 0$, with $n \leq k \leq A_n$, $a_n \geq n$, and $n \geq 1$, such that $\sum_k \lambda_n(k) = 1$ and
    \begin{equation*}
        R_n := \sum_{k = n}^{A_n} f_k \tend f \qquad \text{in } L^r(L^q).
    \end{equation*}
    Now consider a point $(t, x) \in \R_+ \times \T^d$ such that the convergence $f_n(t,x) \tend g(t,x)$ holds. Then, by comparing $R_n$ and $g$ at that point, we obtain
    \begin{equation*}
        \begin{split}
            \big| R_n(t,x) - g(t,x) \big| & \leq \sum_{k=n}^{1_n} \lambda_n(k) \big| \rho_k(t,x) - g(t,x) \big| \\
            & \leq \sup_{k \geq n} \big| \rho_k(t,x) - g(t,x) \big| \; \tend_{n \rightarrow} \; 0.
        \end{split}
    \end{equation*}
    We deduce that $R_n \tend g$ almost everywhere. Since the sequence $(R_n)$ converges almost everywhere to $f$ up to an extraction, uniqueness of the pointwise limit gives $f=g$.
\end{proof}

Finally, we will use the following result, easy in the context of Hilbert spaces, which allows to deduce strong convergence of a sequence from weak convergence and convergence of the norms.

\begin{prop}[ See  {\cite[Lemma 3.1.6.]{Zheng}}]\label{p:NormConvergence}
    Consider a uniformly convex Banach space $X$ and a sequence $(f_n)$ of $X$ functions such that we have
    \begin{equation*}
        f_n \wtend f \qquad \text{in } X
    \end{equation*}
    for some $f \in X$ and
    \begin{equation*}
        \| f \|_{X} \tend \| f \|_{X}.
    \end{equation*}
    Then the sequence converges in the norm topology of $X$.
\end{prop}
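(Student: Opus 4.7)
The plan is to reduce the claim to the standard geometric consequence of uniform convexity: whenever $\lVert x \rVert = \lVert y \rVert = 1$ and $\lVert (x+y)/2 \rVert$ is close to $1$, the vectors $x$ and $y$ must be close in norm. (I read the hypothesis ``$\lVert f \rVert_X \tend \lVert f \rVert_X$'' as $\lVert f_n \rVert_X \tend \lVert f \rVert_X$, which is the standard statement.) First I would dispose of the trivial case $f = 0$: the scalar convergence $\lVert f_n \rVert_X \tend 0$ is then exactly $f_n \tend 0$ in the norm topology. From now on one may assume $f \neq 0$, hence $\lVert f_n \rVert_X > 0$ for $n$ large enough, and normalization is legitimate.

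Next, I would pass to the unit-sphere representatives $g_n := f_n / \lVert f_n \rVert_X$ and $g := f/\lVert f \rVert_X$. The weak convergence $f_n \wtend f$ combined with the scalar convergence of norms transfers to $g_n \wtend g$ in $X$, while $\lVert g_n \rVert_X = \lVert g \rVert_X = 1$ by construction. The crucial intermediate step is to show that
\begin{equation*}
    \left\lVert \frac{g_n + g}{2} \right\rVert_X \tend 1.
\end{equation*}
The upper bound $\leq 1$ is just the triangle inequality on the unit sphere, while the lower bound $\liminf \geq 1$ follows from weak lower semicontinuity of the norm applied to the weak limit $(g_n + g)/2 \wtend g$, using $\lVert g \rVert_X = 1$.

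Then I would invoke the defining property of uniform convexity in its usual contrapositive form: for every $\varepsilon > 0$ there exists $\delta > 0$ such that for unit vectors $x, y \in X$, the estimate $\lVert (x+y)/2 \rVert_X > 1 - \delta$ forces $\lVert x - y \rVert_X < \varepsilon$. Applied to the pair $(g_n, g)$ for $n$ large enough, this yields $g_n \tend g$ in norm. Re-assembling via $f_n - f = (\lVert f_n \rVert_X - \lVert f \rVert_X) g_n + \lVert f \rVert_X (g_n - g)$ and using the assumed convergence of norms on the first summand gives $f_n \tend f$ in the norm topology of $X$, which is the claim.

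I do not expect any genuine obstacle: the argument is entirely soft and chains weak lower semicontinuity of the norm with the characterization of uniform convexity. The only point requiring mild care is the passage of weak convergence through the renormalization, which relies on the fact that the scalar factors $\lVert f_n \rVert_X$ converge to the \emph{nonzero} limit $\lVert f \rVert_X$; this is precisely why the case $f = 0$ is handled separately at the start.
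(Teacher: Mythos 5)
Your proof is correct and complete: it is the standard Radon--Riesz argument (normalize to the unit sphere, use weak lower semicontinuity of the norm to show $\lVert (g_n+g)/2 \rVert_X \to 1$, and conclude by the $\varepsilon$--$\delta$ characterization of uniform convexity), and you correctly handle the two points that require care, namely the case $f=0$ and the passage of weak convergence through the normalization. The paper itself gives no proof of this proposition --- it is quoted from the cited reference (Zheng, Lemma 3.1.6) --- so there is nothing to compare against beyond noting that your argument is the standard one and also silently corrects the typo in the statement ($\lVert f_n \rVert_X \to \lVert f \rVert_X$ is the intended hypothesis).
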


\newpage

\addcontentsline{toc}{section}{References}
{\small

}

\end{document}